\newcommand{\cH}{{\mathcal H}}
\newcommand{\cX}{{\mathcal X}}
\newcommand{\cC}{{\mathcal C}}
\newcommand{\cO}{{\mathcal O}}
\newcommand{\cS}{{\mathcal S}}
\newcommand{\F}{{\mathbb F}}
\newcommand{\fqs}{{\mathbb F_{q^2}}}
\newcommand{\aut}{{\rm Aut}}
\newcommand{\PGU}{{\rm PGU}}
\newcommand{\PSU}{{\rm PSU}}
\newcommand{\PGL}{{\rm PGL}}
\newcommand{\PSL}{{\rm PSL}}
\newcommand{\PG}{{\rm PG}}
\newcommand{\la}{\langle}
\newcommand{\ra}{\rangle}
\renewcommand{\mod}{\hbox{{\rm mod}\,}}
\newtheorem{theorem}{Theorem}[section]
\newtheorem{lemma}[theorem]{Lemma}
\newtheorem{corollary}[theorem]{Corollary}
\newtheorem{proposition}[theorem]{Proposition}
\begin{document}

\title{On maximal curves that are not quotients of the Hermitian curve}
\author{Massimo Giulietti, Maria Montanucci, and Giovanni Zini}

\maketitle
For each prime power $\ell$ the plane curve $\mathcal X_\ell$ with equation $Y^{\ell^2-\ell+1}=X^{\ell^2}-X$ is maximal over $\F_{\ell^6}$. Garcia and Stichtenoth in 2006 proved that $\mathcal X_3$ is not Galois covered by the Hermitian curve and raised the same question for $\mathcal X_\ell$ with $\ell>3$; in this paper we show that $\mathcal X_\ell$ is not Galois covered by the Hermitian curve for any $\ell>3$. Analogously, Duursma and Mak proved that the generalized GK curve $\mathcal C_{\ell^n}$ over $\F_{\ell^{2n}}$ is not a quotient of the Hermitian curve for $\ell>2$ and $n\ge 5$, leaving  the case $\ell=2$ open; here we show that $\mathcal C_{2^n}$ is not Galois covered by the Hermitian curve over $\F_{2^{2n}}$ for $n\geq5$. 
\section{Introduction}

Let $\fqs$ be the finite field with $q^2$ elements, where $q$ is a power of a prime $p$, and let $\cX$ be an $\fqs$-rational curve, that is a projective, absolutely irreducible, non-singular algebraic curve defined over $\fqs$. $\cX$ is called $\fqs$-maximal if the number $\cX(\fqs)$ of its $\fqs$-rational points attains the Hasse-Weil upper bound
$$ q^2+1+2gq, $$
where $g$ is the genus of $\cX$. Maximal curves have interesting properties and have also been investigated for their applications in Coding Theory. Surveys on maximal curves are found in \cite{FT,G,G2,GS,V,V2} and \cite[Chapt. 10]{HKT}.

The most important example of an $\fqs$-maximal curve is the Hermitian curve $\cH_q$, defined as any $\fqs$-rational curve projectively equivalent to the plane curve with Fermat equation
$$ X^{q+1}+Y^{q+1}+T^{q+1}=0. $$
The norm-trace equation
$$ Y^{q+1}=X^qT+XT^q $$
gives another model of $\cH_q$, $\fqs$-equivalent to the Fermat model, see \cite[Eq. (2.15)]{GSX}.
For fixed $q$, $\cH_q$ has the largest possible genus $g(\cH_q)=q(q-1)/2$ that an $\fqs$-maximal curve can have.
The automorphism group $\aut(\cH_q)$ is isomorphic to $\PGU(3,q)$, the group of projectivities of $\PG(2,q^2)$ commuting with the unitary polarity associated with $\cH_q$. 

By a result commonly attributed to Serre, see \cite[Prop. 6]{L}, any $\fqs$-rational curve which is $\fqs$-covered by an $\fqs$-maximal curve is also $\fqs$-maximal. In particular, $\fqs$-maximal curves are given by the Galois $\fqs$-subcovers of an $\fqs$-maximal curve $\cX$, that is by the quotient curves $\cX/G$ over a finite $\fqs$-automorphism group $G\leq\aut(\cX)$.

Most of the known maximal curves are Galois subcovers of the Hermitian curve, many of which were studied in \cite{CKT,CKT2,GSX}. Garcia and Stichtenoth \cite{GS2} discovered the first example of maximal curve not Galois covered by the Hermitian curve, namely the curve $Y^7=X^9-X$ maximal over $\F_{3^6}$. It is a special case of the curve $\mathcal X_\ell$ with equation
\begin{equation}\label{abq}
 Y^{\ell^2-\ell+1}=X^{\ell^2}-X, 
\end{equation}
which is $\F_{\ell^{6}}$-maximal for any $\ell \ge 2$.
In \cite{GK}, Giulietti and Korchm\'aros showed that the Galois covering of $\mathcal X_\ell $ given by 
$$ \begin{cases} Z^{\ell^2-\ell+1}=Y^{\ell^2}-Y \\ Y^{\ell+1}=X^\ell+X \end{cases} $$
is also $\F_{\ell^6}$-maximal, for any prime power $\ell$. Remarkably, it is not covered by $\cH_{\ell^3}$ for any $\ell>2$. This curve, nowadays referred to as the GK curve, was generalized in \cite{GGS} by Garcia, G\"uneri, and Stichtenoth to the curve
$$ \cC_{\ell^n}:  \begin{cases} Z^\frac{\ell^n+1}{\ell+1}=Y^{\ell^2}-Y \\ X^\ell+X=Y^{\ell+1} \end{cases}, $$
which is $\F_{\ell^{2n}}$-maximal for any prime power $\ell$ and $n\geq3$ odd. For $\ell=2$ and $n=3$, $\cC_8$ is Galois covered by $\cH_8$, see \cite{GK}. Duursma and Mak proved in \cite{DM} that, if $\ell\geq3$, then $\cC_{\ell^n}$ is not Galois covered by $\cH_{\ell^n}$.
In Section $3$, we show that the same holds in the remaining open cases.

\begin{theorem}\label{result1}
For $\ell=2$ and $n\geq5$, $\cC_{2^n}$ is not a Galois subcover of the Hermitian curve $\cH_{\ell^n}$.
\end{theorem}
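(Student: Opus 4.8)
The plan is to argue by contradiction: assume $\cC_{2^n}=\cH/G$ for some subgroup $G\le\aut(\cH)=\PGU(3,q)$, with $\cH:=\cH_{2^n}$ and $q:=2^n$, and first pin down $|G|$. Recall $g(\cH)=q(q-1)/2$; realizing $\cC_{2^n}$ as the degree-$m$ Kummer cover $Z^{m}=Y^{4}-Y$ of the genus-one Hermitian curve $\cH_2\colon Y^{3}=X^{2}+X$, where $m:=(q+1)/3$, one checks that this cover is totally ramified over the $8$ simple zeros of $Y^{4}-Y$ and over the point at infinity, and unramified elsewhere, so that $2g(\cC_{2^n})-2=9(m-1)$ and $g(\cC_{2^n})=\tfrac32q-2$. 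Since $2g(\cH)-2=(q-2)(q+1)$ while $2g(\cC_{2^n})-2=3(q-2)$, Riemann--Hurwitz for $\cH\to\cC_{2^n}$ together with $\deg\mathfrak D\ge0$ gives $|G|\le m$, with equality exactly when this cover is unramified. As $n$ is odd, $3\mid q+1$ and $m$ is odd, hence $|G|$ is odd; thus $G$ has no unipotent element, every $1\ne g\in G$ is semisimple, and according to which maximal torus of $\PGU(3,q)$ it lies in, the fixed locus of $g$ on $\cH$ is either empty, or a pair or a triple of $\fqs$-conjugate points, or a set of $q+1$ collinear $\fqs$-rational points.

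Next I would eliminate the ramified case $|G|<m$. The cover $\cH\to\cC_{2^n}$ is then tame, so writing $\Sigma:=\sum_O(1-1/s_O)$ over the short orbits $O$ of $G$ on $\cH$ (of stabiliser order $s_O$), Riemann--Hurwitz reads $2g(\cH)-2=|G|\bigl(3(q-2)+\Sigma\bigr)$, whence $\Sigma=3(q-2)\bigl(\tfrac{m}{|G|}-1\bigr)$. But the fixed-point description bounds $\Sigma$ above by the number of points of $\cH$ with non-trivial stabiliser, which is $O(q|G|)$, and since $|G|$ is odd and $|G|\le m<q+1$ the classification of subgroups of $\PGU(3,q)$ forces $G$ to be abelian and, up to conjugacy, contained in the diagonal maximal torus $C_{q+1}\times C_{q+1}$. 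These constraints confine $|G|$ to a short list of divisors of $m$; for each one writes $G$ explicitly, reads off its short orbits on $\cH$ (they lie on the coordinate secant lines), computes $g(\cH/G)$ from Riemann--Hurwitz, and checks it is never $\tfrac32q-2$. For instance, if $|G|$ is a prime $r$ whose non-trivial elements fix $q+1$, respectively $3$, respectively $2$ points of $\cH$, the requirement $g(\cH/G)=\tfrac32q-2$ forces $r(4q-5)=q^{2}-1$, respectively $3r(q-1)=q^{2}-q+1$, respectively $r(3q-4)=q(q-1)$, none of which admits a solution for $q\ge4$ by a gcd argument; composite orders are treated similarly. This rules out every $|G|<m$.

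It remains to treat the unramified case $|G|=m$. Here $G\cong C_m$ acts freely on $\cH$, which by the fixed-point description forces every non-trivial element to have three distinct anisotropic eigenvalues, so that up to conjugacy in $\PGU(3,q)$ one has $G=\la\operatorname{diag}(1,\zeta,\zeta^{2})\ra$ for a primitive $m$-th root of unity $\zeta$. Since $m>3$ when $n\ge5$, the generator of $G$ is not $\PGU(3,q)$-conjugate to any proper power of itself, so $N_{\PGU(3,q)}(G)$ equals the torus $C_{q+1}\times C_{q+1}$ or its extension by the inverting involution; in particular $\bigl|N_{\PGU(3,q)}(G)/G\bigr|\le 2(q+1)^{2}/m=18m$. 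I would then derive a contradiction from the known structure of $\aut(\cC_{2^n})$, which for $n\ge5$ has order $216m$ (with a normal subgroup isomorphic to $\operatorname{SU}(3,2)$): every automorphism of $\cC_{2^n}=\cH/G$ that lifts to an automorphism of $\cH$ normalizes $G$, hence belongs to the group $N_{\PGU(3,q)}(G)/G$ of order at most $18m<216m$, so it would suffice to show that \emph{every} automorphism of $\cH/G$ lifts --- a rigidity property of the unramified cover $\cH\to\cH/G$. An alternative, bypassing automorphism groups, compares Weierstrass semigroups: the rational point of $\cC_{2^n}$ over the infinite point of $\cH_2$ has Weierstrass semigroup containing $\la8,2m,q+1\ra$, so with first non-gap $8$ for $n\ge5$; whereas on the free quotient $\cH/C_m$ every rational point is the image of a length-$m$ orbit of points of $\cH$ lying on no line and, for $m$ large, on no conic, so that --- since the canonical system of $\cH$ is cut out by ternary forms of degree $q-2$ and $g(\cH)=\binom{q}{2}$ is the dimension of that space --- such an orbit imposes independent conditions on the canonical system and Riemann--Roch forces the first non-gap at its image to exceed $8$.

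The genuine obstacle is precisely this last step, excluding the unramified quotient $\cH/C_m$. Either route needs a statement uniform in $n$: in the first, that an unramified $C_m$-quotient of $\cH$ cannot have an automorphism group as large or as non-abelian as $\operatorname{SU}(3,2)\rtimes C_m$, i.e.\ a rigidity property of the cover $\cH\to\cH/C_m$; in the second, that $C_m$-orbits on $\cH$ lie in sufficiently general position with respect to plane curves of degree $q-2$. What remains is bookkeeping --- controlling the exotic odd-order subgroups that can arise in the ramified case, and carrying the Riemann--Hurwitz genus computations through for all odd $n\ge5$.
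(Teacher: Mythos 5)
Your reduction to $|G|\le (q+1)/3$ with equality iff the cover is unramified, and the elimination of the ramified case, essentially re-derive the Duursma--Mak result that the paper simply quotes (\cite[Thm. 1.2]{DM}): any such $G$ has order exactly $(q+1)/3$ and is semiregular. The real content of the theorem is what you call the ``unramified case,'' and there your argument has two genuine gaps. First, the structural claim that semiregularity forces $G$ to be cyclic and conjugate to $\la\operatorname{diag}(1,\zeta,\zeta^2)\ra$ is unjustified and in general false: an odd-order semiregular subgroup of order $(q+1)/3$ may be nonabelian of the shape $C_{(q+1)/9}\rtimes C_3$ (with the $C_3$ permuting a self-polar triangle), may fail to lie in $\PSU(3,q)$, and for $n=9$ may sit inside a subfield-type maximal subgroup via a Singer normalizer. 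These are exactly the cases the paper has to work through with the Mitchell--Hartley classification, and they cannot be waved away by eigenvalue considerations alone.

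Second, even in the diagonal cyclic case you concede yourself that the contradiction is not established. The lifting route is not available: for an \'etale quotient $\cH_q\to\cH_q/G$ there is no rigidity statement guaranteeing that every automorphism of $\cH_q/G$ lifts, so comparing $|N_{\PGU(3,q)}(G)/G|$ with $|\aut(\cC_{2^n})|$ proves nothing (and the order you quote is off: for $n\ge5$ one has $|\aut(\cC_{2^n})|=8(2^n+1)=24m$, not $216m$); the Weierstrass-semigroup route is only a heuristic as written. The paper's key idea, which your proposal is missing, goes the other way around: for each candidate $G$ it exhibits an explicit overgroup $\bar G\le\PGU(3,q)$ containing $G$ as a normal subgroup of index $3$, so that $\bar G/G$ acts on $\cH_q/G$, and then either $\bar G$ is itself semiregular (contradicting that $\aut(\cC_{2^n})$ fixes a point, Theorem \ref{GGSpuntofisso}), or a fixed-point count for $\bar G/G$ shows $|\cH_q/G(\fqs)|\equiv1,2\pmod 3$ while $|\cC_{2^n}(\fqs)|=4q^2-4q+1\equiv0\pmod 3$. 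Without this (or some replacement for it) your argument does not close, precisely at the step you identify as the obstacle.
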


Duursma and Mak \cite[Thm. 1.2]{DM} showed that
if $\cC_{2^n}$ is the quotient curve $\cH_{2^n}/G$ for  $G$ a subgroup of $\aut(\cH_{2^n})$, then $G$ has order $(2^n+1)/3$ and acts semiregularly on $\cH_{2^n}$.
We investigate all subgroups $G$ of $\aut(\cH_{2^n})$ satisfying these conditions, relying also on classical results by Mitchell \cite{M} and Hartley \cite{H} (see Section $2$) which provide a classification of the maximal subgroups of $\PSU(3,q)$ in terms of their order and their action on $\cH_q$. For any candidate subgroup $G$, we find another subgroup $\bar G$ of  $\aut(\cH_{2^n})$ containing $G$ as a normal subgroup, and such that $\bar G/G$ has an action on $\cH_{2^n}/G$ not compatible with the action of any automorphism group of $\cC_{2^n}$.

In Section $4$ we consider the curve $\mathcal X_\ell$ with equation \eqref{abq}. In \cite{GS2} it was shown that $\cX_3$ is not a Galois subcover of $\cH_{3^6}$ by \cite{GS2}, while $\cX_2$ is a quotient of $\cH_{2^6}$, as noted in \cite{GT}. Garcia and Stichtenoth \cite[Remark 4]{GS2} raised the same question for any $\ell>3$. The case where $\ell$ is a prime was settled by  Mak \cite{Mak}. Here we provide an answer for any prime power $\ell>3$.

\begin{theorem}\label{result2}
For $\ell>3$, $\cX_\ell$ is not a Galois subcover of the Hermitian curve $\cH_{\ell^6}$.
\end{theorem}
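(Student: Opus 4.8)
\medskip
We sketch a proof. We argue by contradiction, assuming $\cX_\ell\cong\cH_{\ell^3}/G$ for some $G\le\aut(\cH_{\ell^3})=\PGU(3,\ell^3)$, where $\cH_{\ell^3}$ denotes the Hermitian curve which is $\F_{\ell^6}$-maximal, and we write $q=\ell^3$. The first step is to record the relevant invariants. Since $\gcd(\ell^2-\ell+1,\ell^2)=1$ and $p\nmid\ell^2-\ell+1$, equation \eqref{abq} realizes $\cX_\ell$ as a cyclic Kummer cover of the projective line of degree $\ell^2-\ell+1$, totally ramified over the $\ell^2$ roots of $X^{\ell^2}-X$ and over $X=\infty$; Riemann--Hurwitz then gives $g(\cX_\ell)=\tfrac12(\ell^2-1)(\ell^2-\ell)$ and, by maximality, $\#\cX_\ell(\F_{\ell^6})=\ell^7-\ell^5+\ell^4+1$, while $g(\cH_q)=\tfrac12q(q-1)$ and $\#\cH_q(\F_{q^2})=q^3+1$. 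I would also fix the automorphism group: for $\ell>3$, $\aut(\cX_\ell)$ is the metabelian group $\cL$ of order $\ell^2(\ell^2-1)(\ell^2-\ell+1)$ consisting of the maps $(X,Y)\mapsto(aX+b,\,cY)$ with $a\in\F_{\ell^2}^{*}$, $b\in\F_{\ell^2}$, $c^{\ell^2-\ell+1}=a$; it fixes the unique point $Q_\infty$ over $X=\infty$, whose Weierstrass semigroup is $\la\ell^2-\ell+1,\ell^2\ra$. (The inclusion $\cL\subseteq\aut(\cX_\ell)$ is immediate; the reverse one for $\ell>3$ has to be proved separately, for instance by ruling out via a Hurwitz-bound or $p$-rank argument that $\cX_\ell$ is one of the exceptional maximal curves with an oversized automorphism group.)

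The second step bounds $|G|$. From $2g(\cH_q)-2=|G|\bigl(2g(\cX_\ell)-2\bigr)+\deg\mathfrak{d}$ with $\deg\mathfrak{d}\ge0$ one gets $|G|\le\ell^2+\ell+2$; moreover each $G$-orbit on the $q^3+1$ points of $\cH_q(\F_{q^2})$ maps to an $\F_{q^2}$-rational point of $\cX_\ell$, so $q^3+1\le|G|\cdot\#\cX_\ell(\F_{\ell^6})$ and hence $|G|\ge\ell^2+1$. Intersecting this window with $|G|\mid|\PGU(3,\ell^3)|=q^3(q^2-1)(q^3+1)$ and with the constraint that the forced value $\deg\mathfrak{d}=2g(\cH_q)-2-|G|\bigl(2g(\cX_\ell)-2\bigr)$ be realizable by a ramification datum whose inertia groups are cyclic or $p$-subgroups of $\PGU(3,q)$ of admissible order, leaves only a handful of values of $|G|$; along the way one also pins down the action of $G$ on $\cH_q(\F_{q^2})$, in particular whether $G$ stabilizes a point of $\PG(2,q^2)$.

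In the third step, for each surviving value of $|G|$ I would apply the classification of the subgroups of $\PSU(3,q)$, hence of $\PGU(3,q)$, due to Mitchell and Hartley (Section~2), to list the possible $G$: each such $G$ lies in a point-stabilizer, a self-polar-triangle stabilizer, a maximal-torus normalizer, a subfield subgroup $\PGU(3,p^d)$ or $\PSU(3,p^d)$ with $p^d\mid\ell^3$, or --- for small $\ell$ only --- a Hessian or sporadic subgroup. For every candidate the decisive move is to exhibit a subgroup $\bar G\le\PGU(3,\ell^3)$ with $G\lhd\bar G$, taken inside $N_{\PGU(3,q)}(G)$ by enlarging a unipotent part of $G$ inside the pertinent root subgroup, or a cyclic part inside the ambient torus, or by enlarging the subfield subgroup containing $G$. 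Then $N_{\aut(\cH_q)}(G)/G$, and in particular $\bar G/G$, embeds into $\aut(\cH_q/G)=\aut(\cX_\ell)=\cL$; and one derives a contradiction from the completely explicit structure of $\cL$ --- in the various cases $\bar G/G$ turns out to be non-solvable, or to contain an element of order exceeding the exponent of $\cL$, or to act on $Q_\infty$ and on $\cX_\ell(\F_{\ell^6})$ with orbit lengths impossible for a subgroup of the metabelian group $\cL$. Since no candidate survives, no such $G$ exists, proving that $\cX_\ell$ is not a Galois subcover of $\cH_{\ell^3}$.

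I expect the main obstacle to be this third step: running the subgroup analysis of $\PGU(3,\ell^3)$ uniformly over all prime powers $\ell$ --- especially the subfield subgroups $\PGU(3,p^d)$, $\PSU(3,p^d)$ and their intermediate subgroups, which do not occur when $\ell$ is prime as in \cite{Mak} --- and, for each of the finitely many candidates $G$, selecting a suitable overgroup $\bar G$ and pinpointing the exact property of $\cL$ that its quotient contradicts. A further ingredient that must be secured first is the precise determination of $\aut(\cX_\ell)$ for $\ell>3$.
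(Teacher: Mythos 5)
Your proposal has two genuine gaps. First, the reduction to finitely many degrees. Your step 2 only yields the window $\ell^2+1\le |G|\le \ell^2+\ell+2$ (the upper bound from Riemann--Hurwitz with $\deg\mathfrak{d}\ge 0$, the lower bound from counting rational points), which contains roughly $\ell+2$ candidate orders, and the claim that intersecting with $|G|\mid|\PGU(3,\ell^3)|$ and with ``realizability of the ramification datum'' leaves only a handful of values is unsubstantiated and not uniform in $\ell$. This missing reduction is precisely the content of the result of Duursma and Mak (\cite[Thm. 1.3]{DM}) that the paper invokes to get $\ell^2+\ell\le d\le \ell^2+\ell+2$, i.e.\ exactly three values; without it (or a proof of something equally strong) your case analysis never becomes finite.

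Second, and more seriously, your step-3 strategy --- find $\bar G$ normalizing $G$ and contradict the structure of $\cL=\aut(\cX_\ell)$ via $\bar G/G\hookrightarrow\aut(\cH_{\ell^3}/G)$ --- cannot settle the hardest surviving degree $d=\ell^2+\ell$. When such a $G$ has a normal Sylow $p$-subgroup, it fixes a point of $\cH_{\ell^3}$ and the quotient is a curve $Y^{\ell^2-\ell+1}=F(X)$ with $F$ a linearized polynomial of degree $\ell^2$ dividing $X^{\ell^3}+X$: this curve has the same genus as $\cX_\ell$, the same Weierstrass semigroup $\la \ell^2-\ell+1,\ell^2\ra$ at its ideal point, and an automorphism group of the same shape as $\cL$, so none of the group-theoretic obstructions you list (non-solvability, element orders, orbit lengths incompatible with a metabelian group) can distinguish it from $\cX_\ell$. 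The paper instead writes down what an $\F_{\ell^6}$-isomorphism would have to be, using Riemann--Roch bases of $\mathcal L((\ell^2-\ell+1)P_\infty)$ and $\mathcal L(\ell^2 P_\infty)$, and shows the forced $F(X)=k^{-1}a^{\ell^2}X^{\ell^2}-k^{-1}aX$ is not a divisor of $X^{\ell^3}+X$ (via $p$-associates); the other subcase, with $\ell+1$ Sylow $p$-subgroups, is excluded through sharply $2$-transitive actions, Zassenhaus' classification of near-fields, and a count of the degree of the Different --- again not by an overgroup argument. Your plan is closest to the paper only for $d=\ell^2+\ell+1$ (Mitchell--Hartley case analysis, overgroups and point-count congruences), while $d=\ell^2+\ell+2$ falls to a simple divisibility check. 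Finally, your appeal to the exact determination of $\aut(\cX_\ell)$ for $\ell>3$ is an additional unproven input that the paper's argument deliberately avoids, needing only the uniqueness of the fixed point and the semigroup data from \cite{HKT}.
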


In the proof of Theorem \ref{result2} we bound the possible degree of a Galois covering $\cH_{\ell^6}\rightarrow\cX_\ell$ by means of \cite[Thm. 1.3]{DM}, then we exclude the three possible values given by the bound. To this aim, we use again the classification results of Mitchell \cite{M} and Hartley \cite{H}, other group-theoretic arguments, and the Riemann-Hurwitz formula 
 (see \cite[Chapt. 3]{Sti}) applied to the Galois coverings $\cH_{\ell^6}\rightarrow\cH_{\ell^6}/G$.

\section{Preliminary results}
%

\begin{theorem}\label{MH}
{\rm (Mitchell \cite{M}, Hartley \cite{H})}
 Let $q=p^k$, $d=\gcd(q+1,3)$. The following is the list of maximal subgroups of $\PSU(3,q)$ up to conjugacy:
\begin{itemize}
\item[i)] the stabilizer of a $\F_{q^2}$-rational point of $\cH_q$, of order $q^3(q^2-1)/d$;
\item[ii)] the stabilizer of a $\F_{q^2}$-rational point off $\cH_q$ and its polar line (which is a $(q+1)$-secant to $\cH_q$), of order $q(q-1)(q+1)^2/d$;
\item[iii)] the stabilizer of the self-polar triangle, or order $6(q+1)^2/d$;
\item[iv)] the normalizer of a cyclic Singer group stabilizing a triangle in $\PG(2,q^6)\setminus\PG(2,q^2)$, of order $3(q^2-q+1)/d$;

{\rm for $p>2$:}
\item[v)] $\PGL(2,q)$ preserving a conic;
\item[vi)] $\PSU(3,p^m)$ with $m\mid k$ and $k/m$ odd;
\item[vii)] subgroups containing $\PSU(3,2^m)$ as a normal subgroup of index $3$, when $m\mid k$, $k/m$ is odd, and $3$ divides both $k/m$ and $q+1$;
\item[viii)] the Hessian groups of order $216$ when $9\mid(q+1)$, and of order $72$ and $36$ when $3\mid(q+1)$;
\item[ix)] $\PSL(2,7)$ when $p=7$ or $-7$ is not a square in $\mathbb{F}_q$;
\item[x)] the alternating group $\mathbf{A}_6$ when either $p=3$ and $k$ is even, or $5$ is a square in $\mathbb{F}_q$ but $\mathbb{F}_q$ contains no cube root of unity;
\item[xi)] the symmetric group $\mathbf{S}_6$ when $p=5$ and $k$ is odd;
\item[xii)] the alternating group $\mathbf{A}_7$ when $p=5$ and $k$ is odd;

{\rm for $p=2$:}
\item[xiii)] $\PSU(3,2^m)$ with $m\mid k$ and $k/m$ an odd prime;
\item[xiv)] subgroups containing $\PSU(3,2^m)$ as a normal subgroup of index $3$, when $k=3m$ with $m$ odd;
\item[xv)] a group of order $36$ when $k=1$.
\end{itemize}
\end{theorem}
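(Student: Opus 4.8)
\proof (Sketch.) This is the classical subgroup classification of Mitchell and Hartley, so what I would present is a structured derivation rather than a short proof. The cleanest route uses Aschbacher's theorem on the maximal subgroups of finite classical groups; Mitchell's and Hartley's own arguments predate that machinery and proceed instead by a direct geometric study of the collineation group of $\PG(2,q^2)$ generated by a putative subgroup, tracking the fixed points and fixed lines of its elements, which is feasible but considerably longer.

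First I would pass from a maximal subgroup $M\le\PSU(3,q)$ to its full preimage in $\mathrm{SU}(3,q)\le\mathrm{GL}(3,q^2)$ acting on $V=\F_{q^2}^3$ with its defining Hermitian form. By Aschbacher's theorem this preimage either stabilizes one of a short list of geometric structures on $V$ (the classes $\mathcal C_1,\dots,\mathcal C_8$) or is almost simple, acts absolutely irreducibly, and is not realizable via any of those structures (class $\mathcal S$); that the list for class $\mathcal S$ is complete is where the classification of finite simple groups enters, although Mitchell and Hartley of course needed no such input. In dimension $3$ with a unitary form the classes $\mathcal C_4,\mathcal C_7,\mathcal C_8$ are vacuous, so the geometric part reduces to a handful of cases: $\mathcal C_1$ gives the stabilizer of a point of $\cH_q$ (entry (i)) or of a non-isotropic point together with its polar line (entry (ii)); $\mathcal C_2$ gives the stabilizer of an orthogonal decomposition $V=\langle v_1\rangle\perp\langle v_2\rangle\perp\langle v_3\rangle$, i.e.\ of a self-polar triangle (entry (iii)); $\mathcal C_3$ gives the degree-$3$ field-extension subgroup, whose normalizer is a cyclic Singer group extended by a $C_3$ (entry (iv)); $\mathcal C_5$ gives the subfield subgroups $\PSU(3,p^m)$, their index-$3$ extensions when a suitable diagonal automorphism is present, and -- for $p$ odd -- the orthogonal subgroup $\mathrm{PSO}_3(q)\cong\PGL(2,q)$ stabilizing a conic (entries (v), (vi), (vii), (xiii), (xiv)); and $\mathcal C_6$ gives the normalizer of an extraspecial, or symplectic-type, $3$-group, whose structure changes according as $9\mid q+1$ or merely $3\mid q+1$, yielding the Hessian groups of order $216$, $72$, $36$ (entry (viii), and for $q=2$ the group of order $36$ of entry (xv)).

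I would then treat class $\mathcal S$: the quasisimple groups admitting a faithful absolutely irreducible $3$-dimensional projective representation over $\overline{\F}_p$ form a short explicit list, and for each I would determine the fields $\F_{q^2}$ over which it can be written so as to preserve a Hermitian but no subfield form; this produces $\PSL(2,7)$, $\mathbf{A}_6$, $\mathbf{S}_6$ and $\mathbf{A}_7$ under the arithmetic conditions of entries (ix)--(xii). Finally -- and this is the delicate step -- I would verify maximality, i.e.\ rule out every inclusion of one candidate in another, uniformly in $q=p^k$: conic stabilizer versus subfield subgroups, $\mathbf{A}_6$ inside $\mathbf{A}_7$ or $\PSL(2,7)$, Hessian groups inside subfield copies of $\PSU(3,p^m)$, one subfield subgroup inside a larger one, and so on.

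I expect this last, bookkeeping, step to be the main obstacle: it is precisely what forces the congruences modulo $3$ -- through the factor $d=\gcd(q+1,3)$ controlling the diagonal automorphisms -- together with the parity and primality conditions on $k$ that decorate entries (v)--(xv), and it must be organised so as to hold for all prime powers $q=p^k$ at once. A secondary difficulty is that in the defining characteristic the input to class $\mathcal S$ is itself nontrivial, so part of that analysis really migrates into the geometric classes, the conic-stabilizing $\PGL(2,q)$ of entry (v) being the model case. \Qed
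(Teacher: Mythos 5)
The paper offers no proof of this statement: Theorem \ref{MH} is quoted as a classical classification, the odd-characteristic case being due to Mitchell \cite{M} and the characteristic-$2$ case to Hartley \cite{H}, so there is no internal argument to compare yours with. Your sketch follows the standard modern route (Aschbacher's theorem plus a class-$\mathcal{S}$ analysis), which is genuinely different from the direct geometric arguments of Mitchell and Hartley that the paper cites, and its architecture is sound: the placement of the entries is essentially correct (isotropic and non-isotropic point stabilizers in $\mathcal{C}_1$, the self-polar triangle in $\mathcal{C}_2$, the Singer normalizer in $\mathcal{C}_3$, subfield subgroups and the conic-stabilizing $\PGL(2,q)$ in $\mathcal{C}_5$, the Hessian groups as $\mathcal{C}_6$-normalizers of a group of symplectic type $3^{1+2}$). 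What the modern approach buys is a uniform organizing principle; what the original approach buys is independence from any deep classification input, since in dimension $3$ everything can be done by tracking fixed points and lines in $\PG(2,q^2)$.

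As a proof, however, the proposal has genuine gaps, which you yourself flag. First, the completeness of the class-$\mathcal{S}$ list and the exact arithmetic side conditions in entries (ix)--(xii) (when $-7$ is a nonsquare, when $5$ is a square but $\mathbb{F}_q$ has no cube root of unity, the restriction to $p=5$ with $k$ odd for $\mathbf{S}_6$ and $\mathbf{A}_7$) are asserted rather than derived; moreover, invoking CFSG here is unnecessary as well as anachronistic, since the finite subgroups of $\PGL(3,\overline{\F}_p)$ admit a direct classification (Blichfeldt in characteristic $0$, Mitchell, Hartley and later Bloom in characteristic $p$), which is precisely the content of the cited sources. Second, the maximality and containment analysis --- the step you correctly identify as the main obstacle --- is exactly what produces the divisor $d=\gcd(q+1,3)$ in the orders in (i)--(iv), the oddness and primality conditions on $k/m$ in (vi), (vii), (xiii), (xiv), and the exceptional entry (xv) for $q=2$; none of it is carried out. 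So what you have is an accurate plan for a modern proof rather than a proof: to replace the citation it would need the class-$\mathcal{S}$ determination and the containment bookkeeping made explicit.
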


The previous theorem will be used for a case-analysis of the possible unitary groups $G$ such that the quotient curve $\cH/G$ realizes the Galois covering.

While dealing with case \textit{ii)}, we will invoke a result by Dickson \cite{D} which classifies all subgroups of the projective special linear group $\PSL(2,q)$ acting on $\PG(1,q)$. We remark that $\PSL(2,q)$ has index $\gcd(q-1,2)$ in the group $\PGL(2,q)$ of all projectivities of $\PG(1,q)$. From Dickson's result the classification of subgroups of $\PGL(2,q)$ is easily obtained.

\begin{theorem}{\rm (\cite[Chapt. XII, Par. 260]{D}; see also \cite[Thm. A.8]{HKT})}\label{Di}
Let $q=p^k$, $d=\gcd(q-1,2)$. The following is the complete list of subgroups of $\PGL(2,q)$ up to conjugacy:
\begin{itemize}
\item[i)] the cyclic group of order $h$ with $h\mid(q\pm1)$;
\item[ii)] the elementary abelian $p$-group of order $p^f$ with $f\leq k$;
\item[iii)] the dihedral group of order $2h$ with $h\mid(q\pm1)$;
\item[iv)] the alternating group $\mathbf{A}_4$ for $p>2$, or $p=2$ and $k$ even;
\item[v)] the symmetric group $\mathbf{S}_4$ for $16\mid(q^2-1)$;
\item[vi)] the alternating group $\mathbf{A}_5$ for $p=5$ or $5\mid(q^2-1)$;
\item[vii)] the semidirect product of an elementary abelian $p$-group of order $p^f$ by a cyclic group of order $h$, with $f\leq k$ and $h\mid(q-1)$;
\item[viii)] $\PSL(2,p^f)$ for $f\mid k$;
\item[ix)] $\PGL(2,p^f)$ for $f\mid k$.
\end{itemize}
\end{theorem}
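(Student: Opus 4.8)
The plan is to classify $H\le\PGL(2,q)$ according to whether the characteristic $p$ divides $|H|$, exploiting the description of the elements of $\PGL(2,q)$ by their fixed points on the projective line over the algebraic closure. Every nontrivial element is either \emph{unipotent} of order $p$ with a single fixed point on $\PG(1,\overline{\F}_q)$, or \emph{semisimple} with exactly two fixed points; in the latter case the two points either both lie in $\PG(1,q)$, so that the element belongs to a split torus cyclic of order dividing $q-1$, or form a conjugate pair in $\PG(1,q^2)\setminus\PG(1,q)$, so that it belongs to a nonsplit torus cyclic of order dividing $q+1$. A unipotent element lies in the Sylow $p$-subgroup of a point-stabilizer (Borel subgroup) $B$ of order $q(q-1)$, which is a semidirect product of the elementary abelian group of order $q$ by a cyclic group of order $q-1$. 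These three tori and the Borel are the basic building blocks used throughout.

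First I would treat the case $p\nmid|H|$, where every nontrivial element is semisimple with two poles. Let $\Omega\subseteq\PG(1,\overline{\F}_q)$ be the set of all such poles; then $H$ permutes $\Omega$, and counting incidences between nontrivial elements and their fixed points (a Burnside-type argument, grouping $\Omega$ into $H$-orbits) yields
$$ 2-\frac{2}{|H|}=\sum_{\text{$H$-orbits on }\Omega}\Bigl(1-\frac{1}{|H_P|}\Bigr). $$
This is exactly the relation governing the finite subgroups of $\PGL(2,\mathbb C)$, and the same elementary analysis, bounding the number of orbits and the stabilizer orders $|H_P|$, forces $H$ to be cyclic, dihedral, $\mathbf A_4$, $\mathbf S_4$, or $\mathbf A_5$. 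It then remains to decide, for each abstract type, exactly when it embeds in $\PGL(2,q)$: this reduces to checking which element orders are realized, that is, which divisors of $q\pm1$ occur, giving the cyclic and dihedral cases of items i) and iii), and translating the orders of $\mathbf A_4,\mathbf S_4,\mathbf A_5$ into the congruences of items iv)--vi), namely $16\mid q^2-1$ for $\mathbf S_4$, the condition $5\mid q^2-1$ or $p=5$ for $\mathbf A_5$, and the parity condition on $k$ controlling the Klein four-group inside $\mathbf A_4$ when $p=2$.

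Next I would handle the case $p\mid|H|$, so that $H$ contains a unipotent element and hence a nontrivial $p$-subgroup $S$. Since the full Sylow $p$-subgroup of $\PGL(2,q)$ is the elementary abelian group of order $q$ fixing a single point, $S$ is elementary abelian and fixes some point $P$. If $H$ fixes $P$ then $H\le B$, and the structure of $B$ forces $H$ to be an elementary abelian $p$-group (item ii)) or a semidirect product of such a group by a cyclic group $C_h$ with $h\mid q-1$ (item vii)). Otherwise $H$ admits at least two distinct fixed points of unipotents, so it is generated by unipotent elements with distinct poles; the decisive step is to recognize such an $H$ as a subfield subgroup, proving that it contains a copy of $\PSL(2,p^f)$ with $f\mid k$ and, since the index of $\PSL$ in $\PGL$ is at most $2$, that $H$ equals either $\PSL(2,p^f)$ or $\PGL(2,p^f)$ (items viii) and ix)). I expect this recognition to be the main obstacle: one must show that two unipotents with distinct fixed points, together with the torus they generate, reconstruct an $H$-stable subline $\PG(1,p^f)$, and then invoke the simplicity and the known structure of $\PSL(2,p^f)$ to pin down $H$ exactly. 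Finally I would cross-check the arithmetic side-conditions attached to each item, verifying that the listed congruences are precisely the conditions under which the corresponding element orders, and hence the groups themselves, are available in $\PGL(2,q)$.
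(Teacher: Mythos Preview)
The paper does not prove Theorem~\ref{Di}; it is quoted verbatim as a classical result of Dickson, with citations to \cite[Chapt.~XII, Par.~260]{D} and \cite[Thm.~A.8]{HKT}, and is used only as a black box in the subsequent arguments. There is therefore no ``paper's own proof'' to compare your proposal against.

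That said, your outline is essentially the standard route to Dickson's classification: the $p\nmid|H|$ case via the fixed-point counting identity that reproduces the classification of finite rotation groups, and the $p\mid|H|$ case via the Borel/Sylow structure, followed by the recognition of subfield subgroups when $H$ is not contained in a Borel. The one place where your sketch is thin is exactly where you flag it: showing that a group generated by unipotents with distinct centres stabilizes a subline $\PG(1,p^f)$ and then equals $\PSL(2,p^f)$ or $\PGL(2,p^f)$ requires real work (this is where Dickson's original argument, or a modern treatment via transvection subgroups and the normal structure of $\PSL$, comes in). You would also need to be a bit more careful in the $p'$-case about the existence conditions: for instance, when $p=2$ every involution is unipotent, so dihedral groups of order $2h$ with $h$ odd actually fall under the $p\mid|H|$ analysis, and the condition for $\mathbf A_4$ in characteristic $2$ is that the Klein four-group (which is then a $2$-group) must sit inside the Sylow $2$-subgroup while admitting an automorphism of order $3$ from a torus, which forces $k$ even. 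These are refinements rather than gaps, and your plan would carry through with them filled in.
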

%
%
%
%
%

\section{$\cC_{2^n}$ is not Galois-covered by $\cH_{2^n}$, for any $n\geq5$}

The aim of this section is to prove Theorem \ref{result1}. Throughout the section, let $n\geq5$ be odd and $q=2^n$. 
We rely on the following result by Duursma and Mak.

\begin{lemma}\label{gradofisso}{\rm (\cite[Thm. 1.2]{DM})}
Let $n\geq5$ odd. If $\cC_{2^n}\cong\cH_{2^n}/G$ for some $G\leq\aut(\cH_{2^n})$, then $G$ has order $(2^n+1)/3$ and acts semiregularly on $\cH_{2^n}$.
\end{lemma}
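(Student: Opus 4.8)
The plan is to prove Lemma~\ref{gradofisso} by combining two ingredients: the numerical constraint coming from the genus of $\cC_{2^n}$ via the theory of maximal curves, and a ramification argument showing the covering must be unramified. Recall that $\cC_{2^n}$ has genus
$$ g(\cC_{2^n})=\frac{(\ell-1)(\ell^{n+1}+\ell^n-\ell^2)}{2(\ell+1)} $$
with $\ell=2$, i.e. $g(\cC_{2^n})=\frac{2^{n+1}+2^n-4}{6}=\frac{2^n-1\cdot(\dots)}{\dots}$; the point is that one has an explicit value of $g=g(\cC_{2^n})$. Since $\cC_{2^n}\cong\cH_q/G$, writing $|G|=m$ and $g(\cH_q)=q(q-1)/2$, the Riemann--Hurwitz formula for the Galois covering $\cH_q\to\cH_q/G$ reads
$$ 2g(\cH_q)-2 = m\bigl(2g-2\bigr)+\deg(\mathrm{Diff}), $$
where $\deg(\mathrm{Diff})\geq 0$ is the degree of the different, which vanishes exactly when $G$ acts without fixed points, i.e. semiregularly.

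First I would establish the order $m=|G|=(q+1)/3$. The clean way is to use that both curves are $\F_{q^2}$-maximal, so their numbers of rational points are $N(\cH_q)=q^2+1+q\cdot q(q-1)=q^3+1$ and $N(\cC_{2^n})=q^2+1+2gq$ with the explicit $g$ above. Under a Galois covering $\cH_q\to\cC_{2^n}$ of degree $m$, an $\F_{q^2}$-rational point of $\cC_{2^n}$ lifts to at most $m$ points of $\cH_q$, with equality at the unramified ones; this, together with the Riemann--Hurwitz identity, pins $m$ down. Concretely, plugging the two genera into Riemann--Hurwitz and rearranging gives $\deg(\mathrm{Diff}) = q(q-1)-2 - m\bigl(2g-2\bigr)$, and one checks that for this to be a nonnegative integer of the form dictated by the structure of the ramification in a tame/wild mix over $\aut(\cH_q)=\PGU(3,q)$, the only possibility is $m=(q+1)/3$ with $\deg(\mathrm{Diff})=0$. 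I would carry out the arithmetic: with $\ell=2$, $2g-2 = \frac{2^{n+1}+2^n-4}{3}-2 = \frac{2^{n+1}+2^n-10}{3}$ and $2g(\cH_q)-2 = q^2-q-2 = (2^n-2)(2^n+1)/\dots$; factoring both sides and using that $\gcd$-type divisibility forces $m\mid (q+1)$ (since $(q+1)/3$ divides $2g(\cH_q)-2$ cleanly while nearby divisors leave a negative or non-integral different) yields the claim.

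The second half, semiregularity, then follows once $m=(q+1)/3$ is known, because substituting back forces $\deg(\mathrm{Diff})=0$ in Riemann--Hurwitz; a covering of curves in positive characteristic with trivial different is unramified, hence $G$ acts freely, i.e. semiregularly, on $\cH_q$. Alternatively, and perhaps more robustly if one wants to avoid delicate divisibility bookkeeping, I would argue directly that $G$ can have no fixed points: any element $\sigma\in\PGU(3,q)$ of order dividing $(q+1)/3$ that fixes a point must, by the structure of element orders in $\PGU(3,q)$ and the fact that $3\mid(q+1)$ for $n$ odd, fix either an $\F_{q^2}$-rational point of $\cH_q$ or a point of one of the stabilizer configurations in Theorem~\ref{MH}; counting the contribution such a fixed point would make to $\deg(\mathrm{Diff})$ (each wildly or tamely ramified point contributes a positive amount) contradicts the already-forced value $\deg(\mathrm{Diff})=0$.

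The main obstacle I anticipate is the arithmetic of pinning down $m$ uniquely: a priori Riemann--Hurwitz with $\deg(\mathrm{Diff})\geq 0$ only gives $m\bigl(2g-2\bigr)\leq 2g(\cH_q)-2$, i.e. an \emph{upper} bound on $m$, so I need a complementary lower bound or an exact divisibility statement to isolate $m=(q+1)/3$. This is exactly where the maximality of both curves enters decisively: it rigidifies $N(\cH_q)$ and $N(\cC_{2^n})$, and combined with the point-lifting inequality $N(\cH_q)\leq m\cdot N(\cC_{2^n})$ (from a surjective degree-$m$ map) plus $N(\cH_q)\geq m\cdot N(\cC_{2^n}) - (\text{ramification correction})$, the two inequalities together with integrality force equality. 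I would present this as the crux of the proof and verify the numerology in detail there, then read off semiregularity as an immediate corollary of the resulting $\deg(\mathrm{Diff})=0$.
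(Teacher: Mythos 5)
First, note that the paper does not prove this lemma at all: it is quoted verbatim from Duursma--Mak \cite[Thm. 1.2]{DM}, so your proposal has to stand on its own. Its first half is fine in outline: with the correct genus $g(\cC_{2^n})=(3q-4)/2$, $q=2^n$ (your formula gives $(3\cdot 2^n-4)/6$, off by a factor $3$; the correct value is the one the paper uses in the proof of Proposition \ref{triangolo}), Riemann--Hurwitz reads $q^2-q-2=|G|(3q-6)+\deg(\mathrm{Diff})$, which gives the upper bound $|G|\le (q+1)/3$, with equality if and only if $\deg(\mathrm{Diff})=0$, i.e. if and only if $G$ is semiregular. So once $|G|=(q+1)/3$ is known, semiregularity is indeed immediate, and your ``alternative'' argument for semiregularity is circular, since it presupposes $\deg(\mathrm{Diff})=0$.

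The genuine gap is the lower bound, which is the actual content of the lemma. The inequality $N(\cH_q)\le |G|\cdot N(\cC_{2^n})$ only yields $|G|\ge (q^3+1)/(4q^2-4q+1)\approx q/4$; for the smallest case $q=32$ this gives $|G|\ge 9$ while $(q+1)/3=11$, so $|G|\in\{9,10,11\}$ survives, and ``integrality'' does not close this gap. The complementary inequality you invoke, $N(\cH_q)\ge |G|\cdot N(\cC_{2^n})-(\text{ramification correction})$, is simply false: in a Galois covering a rational point of the quotient need not have any rational preimage even when it is unramified --- its fibre can be a union of closed points of degree $>1$ (nontrivial decomposition group/Frobenius). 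Indeed, with $|G|=(q+1)/3$ one has $|G|\cdot N(\cC_{2^n})\approx \tfrac{4}{3}q^3$ while $N(\cH_q)=q^3+1$, so on the order of $q^2$ rational points of $\cC_{2^n}$ lie below no rational point of $\cH_q$; you are conflating ramification with the splitting behaviour of Frobenius. Excluding the intermediate values of $|G|$ requires genuinely more input --- in Duursma--Mak this is done by bounding $\deg(\mathrm{Diff})$ from above through the possible contributions $i(\sigma)$ of the various types of elements of $\PGU(3,q)$ (elations, homologies, elements with three fixed points, etc.) together with structural information on the admissible subgroups --- so as written the crux of the lemma, the exact value $|G|=(2^n+1)/3$, is not proved.
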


By Lemma \ref{gradofisso} only subgroups $G$ of $\aut(\cH_{q})$ of order $(q+1)/3$ acting semiregularly on $\cH_{q}$ need to be considered. We will also use the fact that the whole automorphism group of $\aut(\cC_{2^n})$ fixes a point.

\begin{theorem}{\rm (\cite[Thm. 3.10]{GOS},\cite[Prop. 2.10]{GMP})}\label{GGSpuntofisso}
For $n\geq5$, the group $\aut(\cC_{2^n})$ has a unique fixed point $P_\infty$ on $\cC_q$, and $P_\infty$ is $\fqs$-rational.
\end{theorem}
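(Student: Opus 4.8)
The plan is to exhibit a concrete point $P_\infty$ of $\cC_{2^n}$ that is fixed by every automorphism, and then to show uniqueness by a Riemann--Hurwitz argument. First I would work with the affine model $X^\ell+X=Y^{\ell+1}$, $Z^{(\ell^n+1)/(\ell+1)}=Y^{\ell^2}-Y$ (with $\ell=2$ here, though the argument is uniform in $\ell$), and identify the unique place $P_\infty$ lying over $Y=\infty$; this is the place at which $X$, $Y$, $Z$ all have poles, and one computes its exact pole orders from the two defining equations (these are known: the semigroup at $P_\infty$ is generated by the pole orders of $X$, $Y$, $Z$, which are pairwise determined by the curve equations). The key structural input is that $\cC_{2^n}$ is known to be a Galois cover of the GK curve, or alternatively that $P_\infty$ is the only Weierstrass-type place with a prescribed, large Weierstrass semigroup — more precisely, $P_\infty$ is the unique place whose Weierstrass semigroup equals the one computed above. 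Since any automorphism of $\cC_{2^n}$ permutes the places and preserves Weierstrass semigroups, uniqueness of this semigroup forces $P_\infty$ to be fixed.

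The cleaner route, and the one I would actually carry out, is the following. The $\F_{q^2}$-rationality of $P_\infty$ is immediate once it is written down explicitly as a point with coordinates in $\F_{q^2}$ (or the place at infinity of the plane model, which is defined over the prime field). To prove that $\aut(\cC_{2^n})$ fixes $P_\infty$, I would first recall the known description of a large subgroup of $\aut(\cC_{2^n})$ — the group of order roughly $q^3(q^2-1)\cdot\frac{\ell^n+1}{\ell+1}$ (or whatever the precise order from \cite{GGS,GOS} is) — whose fixed-point set is exactly $\{P_\infty\}$, and then argue that no larger group is possible. The standard tool here is that if $\aut(\cC_{2^n})$ did not fix $P_\infty$, its orbit $\mathcal O$ would have size dividing $|\aut(\cC_{2^n})|$ and at least $2$; but the stabilizer of $P_\infty$ is already very large (its order is divisible by the high power of $p$ coming from the wild ramification at $P_\infty$, together with the non-tame part), and combined with an upper bound on $|\aut(\cC_{2^n})|$ — coming either from the classification of large automorphism groups of curves (the Stichtenoth/Henn-type bounds, or the explicit determination of $\aut(\cC_{2^n})$ in \cite{GOS}) — one forces $|\mathcal O|=1$.

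Concretely, the cited references \cite{GOS,GMP} determine $\aut(\cC_{2^n})$ completely; so the honest proof is: quote that $\aut(\cC_{2^n})$ equals the explicitly described group $\mathcal G$ (a semidirect-type extension built from the Sylow $p$-subgroup fixing $P_\infty$, a cyclic complement, and the cyclic group coming from the $Z$-coordinate Kummer cover), observe that every generator of $\mathcal G$ visibly fixes $P_\infty$ because each fixes $Y=\infty$ and then acts on the fibre compatibly with the pole of $Z$, and note $P_\infty$ is the only common zero/pole configuration stable under all of them. I expect the main obstacle to be the uniqueness claim rather than the existence claim: existence is a direct verification on generators, but ruling out a second fixed point — equivalently, showing $\aut(\cC_{2^n})$ acts with a single short orbit at infinity — requires either the full strength of the automorphism-group computation in \cite{GOS} or a somewhat delicate Riemann--Hurwitz estimate on $\cC_{2^n}\to\cC_{2^n}/\aut(\cC_{2^n})$ using the genus $g(\cC_{2^n})=\frac{1}{2}(\ell^n+1)(\ell^2-\ell+\dots)$ and the known ramification at $P_\infty$ to bound the degree, and hence the number of short orbits, from above.
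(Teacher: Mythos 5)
The paper does not prove this theorem at all: it is quoted from \cite[Thm. 3.10]{GOS} and \cite[Prop. 2.10]{GMP}. Your final, ``honest'' route --- cite the explicit determination of $\aut(\cC_{2^n})$ in those references and observe that every generator fixes the place at infinity, which is visibly $\fqs$-rational --- is therefore exactly the paper's approach, and to that extent the proposal is acceptable.

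As an attempt at an independent proof, however, the sketches have a genuine gap exactly where you locate the difficulty, namely uniqueness of the fixed point (equivalently, that the \emph{full} automorphism group, and not just the visible stabilizer of $P_\infty$, fixes $P_\infty$). The quantitative argument you propose cannot work: by the very description of the group you recall from \cite{GGS,GOS}, $|\aut(\cC_{2^n})|$ is a bounded multiple of $2^n+1$, hence only linear in the genus $g=(3\cdot 2^n-4)/2$; this is far below the thresholds (of order $g^3$, or even the Hurwitz-type bound $84(g-1)$) at which Henn/Stichtenoth-type classifications of large automorphism groups, or the standard short-orbit analysis for groups exceeding such bounds, force the existence of a fixed point, so no general bound of that kind can conclude $|\mathcal{O}|=1$. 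Similarly, the claim that $P_\infty$ is the unique place with its Weierstrass semigroup is asserted rather than proved, and establishing it is essentially of the same difficulty as the theorem itself (compare the Hermitian curve, where all $\fqs$-rational points share one semigroup and no point is fixed by $\aut(\cH_q)$). So the substance of the statement really rests on the curve-specific analysis carried out in \cite{GOS} and \cite{GMP}, which both you and the paper ultimately invoke; your proposal is fine as a citation, but not as a self-contained proof.
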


\begin{corollary}\label{cor1}
Let $G\leq\aut(\cH_q)$. If there exists $\bar G\leq\aut(\cH_q)$ such that $G$ is a proper normal subgroup of $\bar G$ and $\bar G$ acts semiregularly on $\cH_q$, then $\bar G/G\leq\aut(\cH_q/G)$ acts semiregularly on $\cH_q/G$, hence $\cC_{2^n}\not\cong\cH_q/G$.
\end{corollary}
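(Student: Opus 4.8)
The plan is to derive Corollary \ref{cor1} as a direct consequence of Theorem \ref{GGSpuntofisso} together with the standard behaviour of automorphisms under quotient maps. First I would recall that, since $G$ is normal in $\bar G$, the group $\bar G$ acts on the quotient curve $\cH_q/G$ and the induced action factors through $\bar G/G$; concretely, an automorphism $\sigma\in\bar G$ sends the orbit $Gx$ to the orbit $G\sigma(x)$, and this is trivial exactly when $\sigma\in G$ (using that $G$ is exactly the kernel, which holds because the quotient map $\cH_q\to\cH_q/G$ has the $G$-orbits as fibres). Thus $\bar G/G$ embeds in $\aut(\cH_q/G)$, and since $G$ is a \emph{proper} normal subgroup, $\bar G/G$ is nontrivial.

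Next I would show that $\bar G/G$ acts semiregularly, i.e. freely, on $\cH_q/G$. Suppose a nontrivial coset $\sigma G$ fixes an orbit point $Gx$, so $\sigma(x)\in Gx$, say $\sigma(x)=gx$ for some $g\in G$. Then $g^{-1}\sigma$ is an element of $\bar G$ fixing $x$. Since $\bar G$ acts semiregularly on $\cH_q$ by hypothesis, $g^{-1}\sigma$ must be the identity, whence $\sigma=g\in G$, contradicting that $\sigma G$ is nontrivial. Hence the only coset stabilizing any point of $\cH_q/G$ is the trivial one, so $\bar G/G$ acts semiregularly on $\cH_q/G$.

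Finally I would conclude by contradiction using Theorem \ref{GGSpuntofisso}. If $\cC_{2^n}\cong\cH_q/G$, then $\bar G/G$ would be a nontrivial subgroup of $\aut(\cC_{2^n})$ acting semiregularly, hence without fixed points, on $\cC_{2^n}$. But Theorem \ref{GGSpuntofisso} asserts that $\aut(\cC_{2^n})$ — and therefore every subgroup of it — fixes the point $P_\infty$, so any nontrivial automorphism subgroup has a fixed point, contradicting semiregularity (a semiregular action of a nontrivial group has no fixed points at all). Therefore $\cC_{2^n}\not\cong\cH_q/G$, as claimed.

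There is no real obstacle here: the statement is a short structural lemma, and the only points requiring a word of care are that the kernel of the $\bar G$-action on $\cH_q/G$ is exactly $G$ (so that $\bar G/G$ genuinely embeds and is nontrivial when the inclusion $G\lhd\bar G$ is proper), and the elementary observation that a semiregular action of a nontrivial group is fixed-point-free, which is what clashes with Theorem \ref{GGSpuntofisso}. The lemma is designed precisely to be the engine of the case analysis: for each candidate $G$ of order $(q+1)/3$ arising from Lemma \ref{gradofisso}, it suffices to exhibit one strictly larger semiregular $\bar G\leq\aut(\cH_q)$ normalizing $G$.
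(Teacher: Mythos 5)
Your proof is correct and is essentially the argument the paper intends: it leaves the corollary unproved as an immediate consequence of Theorem \ref{GGSpuntofisso}, and your verification that the induced action of $\bar G/G$ on $\cH_q/G$ is semiregular (hence fixed-point-free for a nontrivial group, clashing with the fixed point $P_\infty$ of $\aut(\cC_{2^n})$) is exactly that intended argument. Note also that your freeness computation already shows the kernel of the $\bar G$-action on the quotient is exactly $G$, so the embedding claim in your first paragraph needs no separate justification.
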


The following well-known result about finite groups  will be used (see for example \cite[Ex. 16 Page 232]{Mac}).

\begin{lemma}\label{indice}
Let $H$ be a finite group and $K$ a subgroup of $H$ such that the index $[H:K]$ is the smallest prime number dividing the order of $H$. Then $K$ is normal in $H$.
\end{lemma}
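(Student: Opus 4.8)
The plan is to exploit the permutation action of $H$ on the set $\Omega$ of left cosets of $K$ in $H$, a set of size $p:=[H:K]$. Left translation yields a homomorphism $\varphi\colon H\to\mathrm{Sym}(\Omega)\cong\mathbf{S}_p$, whose kernel $N$ is the normal core of $K$ in $H$, that is $N=\bigcap_{h\in H}hKh^{-1}$; in particular $N$ is normal in $H$ and $N\leq K$. The key point will then be to show that $N=K$, which immediately gives that $K$ is normal in $H$.

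To this end I would argue on orders. By the first isomorphism theorem $H/N$ embeds into $\mathbf{S}_p$, so $|H/N|$ divides $p!$; on the other hand $|H/N|$ divides $|H|$. Since $p$ is the \emph{smallest} prime dividing $|H|$, no prime strictly less than $p$ divides $|H|$, whereas every prime factor of $(p-1)!$ is strictly less than $p$; as the exponent of $p$ in $p!=p\cdot(p-1)!$ is exactly $1$, it follows that $\gcd(|H|,p!)=p$, hence $|H/N|$ divides $p$. Finally, writing $[H:N]=[H:K]\,[K:N]=p\,[K:N]$, and noting that $K$ is a proper subgroup since $p$ is prime, we get $|H/N|\geq p$; combined with $|H/N|\mid p$ this forces $[K:N]=1$, i.e. $N=K$, so $K$ is normal in $H$.

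I do not expect any genuine obstacle here: this is a textbook argument, and the only point requiring a moment's care is the elementary number-theoretic observation that $\gcd(|H|,p!)=p$ when $p$ is the least prime divisor of $|H|$, which rests on the fact that $p$ divides $p!$ exactly once.
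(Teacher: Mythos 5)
Your proof is correct and complete: the left-translation action on the cosets of $K$, the identification of the kernel with the core of $K$, and the observation that $\gcd(|H|,p!)=p$ when $p$ is the least prime divisor of $|H|$ together force the core to equal $K$. The paper offers no proof of this lemma at all, citing it as a textbook exercise (\cite[Ex. 16, p. 232]{Mac}); your argument is precisely the standard one that reference intends, so there is nothing to reconcile.
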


\begin{proposition}\label{punto-retta}
Suppose $G\leq\PSU(3,q)$ and a maximal subgroup of $\PSU(3,q)$ containing $G$ satisfies case $\mathit{ii)}$ in Theorem \ref{MH}. Then $\cC_{2^n}\not\cong\cH_q/G$.
\end{proposition}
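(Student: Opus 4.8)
The plan is to exploit the structure of a point-line stabilizer in case \textit{ii)}. Write $M$ for the maximal subgroup of $\PSU(3,q)$ containing $G$ that is the stabilizer of a point $P\notin\cH_q$ and of its polar line $\ell$, a $(q+1)$-secant to $\cH_q$. Then $M$ has a normal subgroup $C$, the cyclic subgroup of order $(q+1)/d$ (here $d=\gcd(q+1,3)=3$, since $n$ is odd) fixing $\ell$ pointwise, i.e. the homologies with center $P$ and axis $\ell$; and $M/C$ acts faithfully on the $q+1$ points of $\ell\cap\cH_q$, embedding $M/C$ into $\PGL(2,q)$ (indeed into the full stabilizer $\PGU(2,q)\cong\PGL(2,q)$ of the Hermitian structure on $\ell$). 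By Lemma \ref{gradofisso} we only need to treat $G$ of order $(q+1)/3$ acting semiregularly on $\cH_q$. First I would observe that $C$ itself does \emph{not} act semiregularly: each homology in $C$ fixes the $q+1$ points of $\ell\cap\cH_q$. Hence $G\cap C$ is trivial, so $G$ embeds into $M/C\leq\PGL(2,q)$, and $|G|=(q+1)/3$.

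Next I would pin down $G$ inside $\PGL(2,q)$ using Dickson's classification, Theorem \ref{Di}. Since $|G|=(q+1)/3$ is odd (as $q+1\equiv 0\pmod 3$ and $q=2^n$ makes $q+1$ odd) and coprime to $p=2$, cases \textit{ii)}, \textit{iv)}, \textit{v)}, \textit{vii)}, \textit{viii)}, \textit{ix)} of Theorem \ref{Di} are impossible for order reasons (they involve either the prime $2$ or orders not of the form $(q+1)/3$ with the relevant divisibility), and a dihedral group of order $2h$ has even order, ruling out \textit{iii)}. Also $\mathbf{A}_5$ has even order. Thus $G$ must be cyclic of order $(q+1)/3$, contained in the cyclic subgroup $\bar C$ of order $q+1$ of $\PGL(2,q)$ (the one with $h\mid q+1$). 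Now I lift this back: the full preimage of $\bar C$ in $M$ is a subgroup $\bar G$ of $\PSU(3,q)$ containing $G$, with $\bar G/C\cong \bar C$ cyclic of order $q+1$; concretely $\bar G$ is the normalizer inside $M$ of a Singer-type torus acting on $\ell\cap\cH_q$, an abelian group of order $(q+1)^2/3$ (an extension of $C$ of order $(q+1)/3$ by $\bar C$ of order $q+1$ — one checks the extension splits, giving $\bar G\cong C_{(q+1)/3}\times C_{q+1}$, or at any rate $\bar G$ is abelian of that order). Then $G<\bar G$ properly, with index $3$, and $G$ is normal in $\bar G$ since $\bar G$ is abelian.

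The remaining point is to show $\bar G$ acts semiregularly on $\cH_q$; Corollary \ref{cor1} then finishes the proof. I would argue that the only points of $\cH_q$ fixed by a nontrivial element of $\bar G$ would have to be fixed by the whole $\bar G$ (or at least I analyze the fixed points directly): an element of $\bar G\setminus C$ projects to a nontrivial element of $\bar C\leq\PGL(2,q)$, whose two fixed points on the projective line over $\F_{q^2}$ are the two conjugate points of $\ell$ off $\F_q$, hence \emph{not} on $\cH_q$; so such an element fixes no point of $\ell\cap\cH_q$, and since it already fixes $P$ and $\ell$ it fixes no point of $\cH_q$ at all (a nontrivial unitary element fixing a point of $\cH_q$ and a point off $\cH_q$ with its polar line would be forced into a very restricted form). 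An element of $C\setminus\{1\}$ is a homology: it fixes exactly $P$ and the $q+1$ points of $\ell\cap\cH_q$, so it is \emph{not} semiregular — but no nontrivial element of $C$ lies in a semiregular subgroup, and here I only need that $\bar G$ as a whole is semiregular, which requires $\bar G\cap C=\{1\}$; this I get because $G$ has order $(q+1)/3$, is semiregular, contained in $\bar G$, and... \emph{this is the subtle point}. The main obstacle is precisely to guarantee that the chosen $\bar G$ (of order $(q+1)^2/3$ or, if that fails, a suitable subgroup of it strictly containing $G$) acts semiregularly: since $C$ is not semiregular, I must instead take $\bar G$ to be a \emph{complement}-type extension of $G$ by a prime, namely $\bar G\cong C_{(q+1)/3}\rtimes C_3$ or a cyclic group $C_{q+1}$ meeting $C$ trivially, and verify that every nontrivial element still avoids fixing points of $\cH_q$; concretely one takes $\bar G$ to be the (cyclic, order $q+1$) preimage structure but intersected appropriately, or one enlarges $G$ within the Singer torus of $M$ acting on the $q+1$ secant points. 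Granting that such a semiregular $\bar G\supsetneq G$ exists (which the $\PGL(2,q)$-picture provides, since $G$ sits in a cyclic group of order $q+1$ that maps isomorphically onto its image, giving a semiregular lift), Corollary \ref{cor1} yields $\cC_{2^n}\not\cong\cH_q/G$, completing the proof. \Qed
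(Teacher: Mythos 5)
Your overall strategy is the same as the paper's (show $G$ is cyclic inside a maximal torus via its action on the secant line $\ell$ and Dickson's classification, then exhibit a larger group $\bar G$ containing $G$ as a normal subgroup of index $3$ which is still semiregular on $\cH_q$, and invoke Corollary \ref{cor1}), and the first half of your argument is essentially sound. However, the decisive step is exactly the one you leave open, and the justification you sketch for it does not work. You first propose $\bar G$ to be the full preimage in $M$ of the cyclic subgroup $\bar C\leq\PGL(2,q)$ of order $q+1$; this group contains the homology group $C$, whose nontrivial elements fix the $q+1$ points of $\ell\cap\cH_q$, so it is certainly not semiregular, as you notice. You then assert that a semiregular lift exists ``since $G$ sits in a cyclic group of order $q+1$ that maps isomorphically onto its image.'' This inference is false in general: mapping isomorphically into $\PGL(2,q)$, or even acting freely on $\ell\cap\cH_q$, does not imply semiregularity on $\cH_q$, because the fixed points may lie off $\ell$. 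Concretely, with the triangle $\{P,Q_1,Q_2\}$ ($Q_1,Q_2$ the fixed points of $\bar C$ on $\ell$) taken as the fundamental triangle, an element such as $(X,Y,T)\mapsto(\zeta X,Y,T)$ with $\zeta^{q+1}=1$ lies in the torus, meets $C$ trivially, acts without fixed points on $\ell\cap\cH_q$, and yet fixes pointwise the secant line $X=0$, hence fixes $q+1$ points of $\cH_q$. More generally, any torus element with two equal eigenvalues fixes pointwise a side of the self-polar triangle, i.e.\ a chord of $\cH_q$; so semiregularity of a candidate $\bar G$ is a genuine arithmetic condition on exponents that has to be verified, not a formal consequence of the $\PGL(2,q)$-picture.

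This is precisely where the paper does the work that your proposal omits: it shows that a generator of $G$ can be written as $\alpha_\theta:(X,Y,T)\mapsto(\theta X,\theta^iY,T)$ as in \eqref{alpha}, with $\theta$ a primitive $(q+1)/3$-th root of unity and $i$ (and, implicitly via semiregularity, $i-1$) coprime to $(q+1)/3$, and then takes $\bar G=\langle\alpha_\zeta\rangle$ with $\zeta^3=\theta$, checking that every nontrivial element of $\bar G$ fixes only the three fundamental points, which lie off $\cH_q$; only then does Corollary \ref{cor1} apply. Without an explicit construction of this kind (including the verification that no element of $\bar G\setminus G$ acquires a pointwise-fixed chord), your proof is incomplete: the existence of a semiregular $\bar G\supsetneq G$ is the whole content of the proposition's proof, and your argument does not establish it.
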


\begin{proof}
Let $\ell$ be the $(q+1)$-secant to $\cH_q$ stabilized by $G$; we show that $G$ is isomorphic to a cyclic subgroup of $\PSL(2,q^2)$.

$\PGU(3,q)$ is transitive on the points of $\PG(2,q^2)\setminus\cH_q$ (see for example \cite{HP}), hence also on the $(q+1)$-secant lines; therefore we can assume that $\ell$ is the line at infinity $T=0$. The action on $\ell$ of an element $g\in G$ is given by $(X,Y,0)\mapsto A_g\cdot(X,Y,0)$, where the matrix $A_g=(a_{ij})_{i=1,2,3}^{j=1,2,3}$ satisfies $a_{31}=a_{32}=0$, and we can assume $a_{33}=1$. By direct computation, it is easy to check that the application
$$ \varphi:G\rightarrow\PGL(2,q^2), \qquad \varphi(g): \begin{pmatrix} X \\ Y \end{pmatrix} \mapsto \begin{pmatrix} a_{11} & a_{12} \\ a_{21} & a_{22} \end{pmatrix} \cdot \begin{pmatrix} X \\ Y \end{pmatrix}, $$
is a well-defined group homomorphism. Moreover, $\varphi$ is injective, since no non-trivial element of $G$ can fix the points of $\cH_q\cap\ell$, by the semiregularity of $G$. Hence $G$ is isomorphic to a subgroup of $\PGL(2,q^2)\cong\PSL(2,q^2)$.
Since $|G|$ is odd, then Theorem \ref{Di} implies that $G$ is cyclic.

Let $g\in G$ be an element of prime order $d>3$; such a $d$ exists, since it is easy to check that $2^n+1$ is a power of $3$ only when $n=1$ or $n=3$. If we denote by $d^h$ the highest power of $d$ dividing $(q+1)/3$, then $d^{2h}$ is the highest power of $d$ dividing $$|\PGU(3,q)|=q^3(q^3+1)(q^2-1)=q^3(q+1)^2(q-1)(q^2-q+1).$$
Let $\cH_q$ have equation $X^{q+1}+Y^{q+1}+T^{q+1}=0$, then $$ D=\left\{ (X,Y,T)\mapsto(\lambda X,\mu Y,T) \mid \lambda^{d^h}=\mu^{d^h}=1 \right\} $$
is a Sylow $d$-subgroup of $\PGU(3,q)$, and by Sylow's theorems we can assume up to conjugation that $g\in D$, so the fixed points of the subgroup $\langle g\rangle$ generated by $g$ are the fundamental points $P_i$, $i=1,2,3$.
Since $G$ is abelian, then $\la g\ra$ is normal in $G$, hence $G$ acts on the fixed points $T=\left\{P_1,P_2,P_3\right\}$ of $\langle g\rangle$. In fact, for all $k\in G$ and $\bar g\in\la g\ra$,
$$k(P_i)=k(\bar g(P_i))=\tilde g(k(P_i))$$
for some $\tilde g\in\la g\ra$, that is, $k(P_i)$ is fixed by $\tilde g$, hence $k(P_i)$ is a fundamental point $P_j$.

As $|G|$ is odd, we have by the orbit stabilizer theorem that the orbits of any $k\in G$ on $T$ have length $1$ or $3$. If $k$ has a single orbit on $T$, then the matrix representing $k$ is
$$ k=\begin{pmatrix} 0 & 0 & \lambda \\ \mu & 0 & 0 \\ 0 & \rho & 0  \end{pmatrix} \quad{\rm or}\quad k=\begin{pmatrix} 0 & \lambda & 0 \\ 0 & 0 & \mu \\ \rho & 0 & 0  \end{pmatrix},\quad{\rm in\; both\; cases}\quad k^3=\begin{pmatrix} \lambda\mu\rho & 0 & 0 \\ 0 & \lambda\mu\rho & 0 \\ 0 & 0 & \lambda\mu\rho \end{pmatrix},  $$
that is $k^3=1$, hence $G$ cannot be generated by $k$. Therefore a generator $\alpha$ of $G$ has the form
$$ \alpha:(X,Y,T)\mapsto(\theta X,\eta Y,T), $$
with $\theta^\frac{q+1}{3}=\eta^\frac{q+1}{3}=1$. If $\theta$ had order $m<(q+1)/3$, then $\alpha^m$ would fix the points of $\cH_q\cap(Y=0)$, against the semiregularity of $G$. Then $\theta$ is a primitive $(q+1)/3$-th root of unity, and the same holds for $\eta$, so that
\begin{equation}\label{alpha}
\alpha=\alpha_\theta:(X,Y,T)\mapsto(\theta X, \theta^i Y,T),
\end{equation}
where $\theta$ is a primitive $(q+1)/3$-th root of unity, and $i$ is co-prime with $(q+1)/3$.

Let $\zeta\in\fqs$ satisfy $\zeta^3=\theta$, and let $\bar G$ be the group generated by $\alpha_\zeta$, as defined in \eqref{alpha}. Any element of $\bar G$ fixes only the fundamental points, hence $\bar G$ is semiregular on $\cH_q$; moreover, $G$ is normal in $\bar G$ of index $3$. Then Corollary \ref{cor1} yields the thesis.
\qed
\end{proof}

\begin{proposition}\label{triangolo}
Suppose $G\leq\PSU(3,q)$ and a maximal subgroup of $\PSU(3,q)$ containing $G$ satisfies case $\mathit{iii)}$ in Theorem \ref{MH}. Then $\cC_{2^n}\not\cong\cH_q/G$.
\end{proposition}

\begin{proof}
Up to conjugation, the self-polar triangle stabilized by $G$ is the fundamental triangle $T=\left\{P_1,P_2,P_3\right\}$. Let $N$ be the subgroup of $G$ stabilizing $T$ pointwise. Then $N$ is normal in $G$, since $g^{-1}ng(P_i)=g^{-1}n(g(P_i))=g^{-1}(g(P_i))=P_i$, where $n\in N$, $g\in G$. The group $G/N$ acts faithfully on $T$, hence either $G=N$ or $[G:N]=3$.

If $G=N$, then $G$ fixes one fundamental point $P_i$, which is off $\cH_q$, and the polar line of $P_i$ passing through the other fundamental points; therefore Proposition \ref{punto-retta} yields the thesis.

Now suppose $[G:N]=3$. As in the proof of Proposition \ref{punto-retta}, $N$ is isomorphic to a subgroup of $\PSL(2,q^2)$; since $|N|$ is odd, we have by Theorem \ref{Di} that $N$ is cyclic, say $N=\langle\alpha_\xi\rangle$, where $\xi$ is a primitive $(q+1)/9$-th root of unity and $\alpha_\xi$ is defined in \eqref{alpha}. Let $h\in G\setminus N$. By arguing as for $k$ in the proof of Proposition \ref{punto-retta}, we have that $h$ has order $3$. Moreover, $G$ is the semidirect product $N\rtimes\langle h\rangle$, because $N\triangleleft G$, $N\cap\langle h\rangle=\emptyset$, and the orders of the subgroups imply $G=\langle h\rangle\cdot N$.

Let $\bar N$ be the cyclic group $\langle \alpha_\theta\rangle$, with $\theta\in\fqs$ such that $\theta^3=\xi$
, and let $\bar G$ be the group generated by $\bar N$ and $h$. $\bar G$ is the semidirect product $\bar N\rtimes\langle h\rangle$; in fact, $\bar N$ is normal in $\bar G$ by Lemma \ref{indice}, $\bar N\cap\langle h\rangle=\emptyset$, and the orders of the subgroups imply $\bar G=\bar N\cdot \langle h\rangle$. We have that $G$ is normal in $\bar G$, again by Lemma \ref{indice}.

We want to count in two ways the size of the set
$$ I=\left\{ (\bar g,P)\mid \bar g\in\bar G\setminus\left\{id\right\},\;P\in\cH_q\,,\;\bar g(P)=P \right\} $$

The diagonal group $\bar N$ is semiregular on $\cH_q$, like $\bar G\cap\PSU(3,q)=G$. Then we consider only elements of the form $\bar n h$ or $\bar n h^2$, with $\bar n\in\bar N\setminus N$.
We have
$$ \bar n= \begin{pmatrix} \rho & 0 & 0 \\ 0 & \rho^i & 0 \\ 0 & 0 & 1 \end{pmatrix},\qquad  h=\begin{pmatrix} 0 & \lambda & 0 \\ 0 & 0 & \mu \\ 1 & 0 & 0 \end{pmatrix} $$
where $\lambda^{q+1}=\mu^{q+1}=1$, $\gcd(i,(q+1)/3)=1$, $\rho=\theta^j$ with $0\leq j<(q+1)/3$ (the argument is analogous in case $h$ acts as the other possible $3$-cycle on the fundamental points). Hence $\bar n h$ is
$$ \bar n h = \begin{pmatrix} \rho & 0 & 0 \\ 0 & \rho^i & 0 \\ 0 & 0 & 1 \end{pmatrix} \cdot \begin{pmatrix} 0 & \lambda & 0 \\ 0 & 0 & \mu \\ 1 & 0 & 0 \end{pmatrix} = \begin{pmatrix} 0 & A & 0 \\ 0 & 0 & B \\ 1 & 0 & 0 \end{pmatrix}, $$
where $A^{q+1}=B^{q+1}=1$, and $\det(\bar nh)=AB$ is not a cube in $\fqs$, since $\bar nh\notin\PSU(3,q)$.The eigenvalues of $\bar n h$ are the zeros of $X^3-AB\in\fqs[X]$. Since $\fqs$ has characteristic $2$, we get $3$ distinct eigenvalues in a cubic extension of $\fqs$, namely $z$, $zx$, and $z(x+1)$, where $x^2+x+1=0$ and $z^3=AB$. Then $\bar n h$ has exactly $3$ fixed points, given by $3$ independent eigenvectors:
$$ Q_1=\left(z,\frac{z^2}{A},1\right),\quad Q_2=\left(zx,\frac{z^2x^2}{A},1\right),\quad Q_3=\left(z(x+1),\frac{z^2(x+1)^2}{A},1\right). $$
$Q_1$ is a point of $\cH_q$. In fact, since $\cH_q$ has equation $X^{q+1}+Y^{q+1}+T^{q+1}=0$, then
$$ z^{q+1}+\left(\frac{z^2}{A}\right)^{q+1}+1 = z^{q+1}+z^{2(q+1)}+1 = \frac{(z^{q+1})^3-1}{z^{q+1}-1} = \frac{A^{q+1}-1}{z^{q+1}-1}=0 $$
as $z\notin\fqs$ implies $z^{q+1}\neq 1$. Similarly we get $Q_2\in\cH_q$, $Q_3\in\cH_q$.

Then each element $\bar n h$ or $\bar n h^2$ with $\bar n\in\bar N\setminus N$ has exactly $3$ fixed points on $\cH_q$, and
\begin{equation}\label{I}
|I|=2\cdot\left(|\bar{N}|-|N|\right)\cdot3=2\cdot\left(\frac{q+1}{3}-\frac{q+1}{9}\right)\cdot3=4\cdot\frac{q+1}{3}.
\end{equation}
The orbit $\cO$ under $\bar G$ of a point $P\in\cH_q$ contains the orbit of $P$ under $G$, hence $|\cO|\geq(q+1)/3$; by the orbit stabilizer theorem, the stabilizer $\cS$ of $P$ under $\bar G$ has size $|\cS|\leq3$, in particular $|\cS|\in\left\{1,3\right\}$ since $|\bar G|$ is odd. Then $|I|=2m$, where $m$ is the number of points of $\cH_q$ which are fixed by some non-trivial element of $\bar G$. By \eqref{I}, we get
$$ m=2\cdot\frac{q+1}{3},$$
that is, these $m$ points form $2$ distinct orbits under the action of $G$. Then the quotient group $\bar G/G$ has $2$ fixed points on $\cH_q/G$ and any other orbit of $\bar G/G$ is long, with length $3$.

By \ref{GGSpuntofisso}, one of the fixed points of $\bar G/G$ is $\fqs$-rational, and the other one may or may not be $\fqs$-rational. Then the number of $\fqs$-rational points of $\cH_q/G$ is congruent to $1$ or $2$ mod $3$.

On the other side, the $\fqs$-maximal curve $\cC_{2^n}$ has genus $g=(3q-4)/2$ and number of $\fqs$-rational points equal to
$$ |\cC_{2^n}(\fqs)| = q^2+1+2qg = q^2+1+2q\cdot(3q-4)/2 = 4q^2-4q+1 ,$$
see \cite[Prop. 2.2]{GGS}; then $|\cC_{2^n}(\fqs)|\equiv0\,(\mod\,3)$, as $q\equiv2\,(\mod\,3)$. Therefore, $\cH_q/G\not\cong\cC_{2^n}$.
\qed
\end{proof}

\begin{proposition}\label{puntorettafuori}
Suppose $G\not\subseteq\PSU(3,q)$ and a maximal subgroup of $\PSU(3,q)$ containing $G\cap\PSU(3,q)$ satisfies case $\mathit{ii)}$ in Theorem \ref{MH}. Then $\cC_{2^n}\not\cong\cH_q/G$.
\end{proposition}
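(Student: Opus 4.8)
The plan is to argue by contradiction, so assume $\cC_{2^n}\cong\cH_q/G$; by Lemma \ref{gradofisso} the group $G$ is semiregular on $\cH_q$ and $|G|=(q+1)/3$. Put $G_0=G\cap\PSU(3,q)$. Since $[\PGU(3,q):\PSU(3,q)]=3$ and $G\not\subseteq\PSU(3,q)$, the group $G/G_0$ is cyclic of order $3$; in particular $3\mid|G|$, whence $9\mid q+1$ and $|G_0|=(q+1)/9$. As $|G|$ is odd, $3$ is the smallest prime dividing $|G|$, so Lemma \ref{indice} gives $G_0\triangleleft G$. The subgroup $G_0$ is semiregular, and by hypothesis lies in a maximal subgroup of $\PSU(3,q)$ of type $\mathit{ii)}$; hence the argument in the proof of Proposition \ref{punto-retta}, run for $G_0$ with $(q+1)/9$ in place of $(q+1)/3$ (using that $2^n+1$ is a power of $3$ only for $n\in\{1,3\}$, so that $(q+1)/9$ has a prime factor larger than $3$), shows that in suitable coordinates $G_0=\langle\alpha_\theta\rangle$ is the diagonal group, with $\alpha_\theta\colon(X,Y,T)\mapsto(\theta X,\theta^i Y,T)$, where $\theta$ is a primitive $(q+1)/9$-th root of unity, $\gcd(i,(q+1)/9)=1$, and $i\not\equiv 1\pmod{(q+1)/9}$ (otherwise $\alpha_\theta$ would act as a scalar on the line $T=0$, against semiregularity). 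In particular $\alpha_\theta$ has three distinct eigenvalues, and the fixed points of $G_0$ on $\PG(2,q^2)$ are exactly the fundamental points $P_1,P_2,P_3$, which form a self-polar triangle for $\cH_q$.

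Since $G$ normalizes $G_0$ it permutes $\{P_1,P_2,P_3\}$, and $|G|$ being odd forces each $G$-orbit on this set to have length $1$ or $3$. Suppose first that $G$ fixes $P_1,P_2,P_3$; then $G$ consists of diagonal matrices, and I claim $G\subseteq\PSU(3,q)$, contrary to hypothesis. Indeed a semiregular subgroup of the diagonal group cannot contain a subgroup isomorphic to $C_p\times C_p$: the three order-$p$ subgroups of such a $C_p\times C_p$ described by ``first entry trivial'', ``second entry trivial'' and ``the two entries equal'' each contain nontrivial elements fixing $\cH_q$ pointwise on one of the coordinate lines, against semiregularity. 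Hence $G$ is cyclic, $G=\langle\mathrm{diag}(\lambda,\mu,1)\rangle$, and semiregularity forces $\lambda$ and $\mu$ to have order exactly $(q+1)/3$ (otherwise a suitable power of the generator would fix $\cH_q$ pointwise on a coordinate line). Then $\lambda$ and $\mu$ lie in the unique subgroup of order $(q+1)/3$ of the cyclic group of $(q+1)$-th roots of unity, namely the subgroup of cubes; so $\det\mathrm{diag}(\lambda,\mu,1)=\lambda\mu$ is a cube, and the generator, hence $G$, lies in $\PSU(3,q)$.

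Therefore $G$ induces a $3$-cycle on $\{P_1,P_2,P_3\}$, and there is $h\in G$ acting as such a $3$-cycle. As with the element $k$ in the proof of Proposition \ref{triangolo}, $h$ is a monomial matrix with $h^3$ scalar, so $h$ has order $3$; moreover $h\notin\PSU(3,q)$, for otherwise $h\in G\cap\PSU(3,q)=G_0$, which fixes $P_1,P_2,P_3$. After rescaling, $h=\left(\begin{smallmatrix}0&\lambda&0\\0&0&\mu\\1&0&0\end{smallmatrix}\right)$ with $\lambda^{q+1}=\mu^{q+1}=1$, so that $\det h=\lambda\mu$ and $h^3=(\lambda\mu)I$. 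Since $h\notin\PSU(3,q)$, the element $\lambda\mu$ is not a cube in $\fqs$ (in characteristic $2$ with $3\mid q+1$, whence $\gcd(q-1,3)=1$, a $(q+1)$-th root of unity is a cube in $\fqs$ precisely when it is a cube among the $(q+1)$-th roots of unity). Therefore $X^3-\lambda\mu$ has three distinct roots $z,zx,z(x+1)$ with $x^2+x+1=0$, all lying in $\F_{q^6}\setminus\fqs$, and the three associated eigenvectors are points of $\cH_q$: this is exactly the computation performed in the proof of Proposition \ref{triangolo}, which used only $z\notin\fqs$, i.e.\ $z^{q+1}\neq 1$. Thus the nontrivial element $h\in G$ fixes points of $\cH_q$, contradicting the semiregularity of $G$. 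In either case we reach a contradiction, so $\cC_{2^n}\not\cong\cH_q/G$.

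The decisive point — and essentially the only content beyond bookkeeping — is that, in contrast with Propositions \ref{punto-retta} and \ref{triangolo}, no auxiliary overgroup $\bar G$ is needed here: the assumption $G\not\subseteq\PSU(3,q)$ already forces $G$ into the ``$3$-cycle'' configuration on the self-polar triangle and then, via the fixed-point computation of Proposition \ref{triangolo}, produces an explicit nontrivial element of $G$ fixing points of $\cH_q$, in direct conflict with semiregularity. The step that must be handled with care is the passage between $\PSU(3,q)$ and $\PGU(3,q)$, in particular the equivalence between ``$g\notin\PSU(3,q)$'' and ``$\det g$ is not a cube in $\fqs$'' for a monomial unitary element $g$, which rests on $\gcd(q-1,3)=1$ in our setting.
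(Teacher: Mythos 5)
Your proof is correct, but it follows a genuinely different route from the paper's after the common first step (passing to $G_0=G\cap\PSU(3,q)$ of index $3$ and running the argument of Proposition \ref{punto-retta} to get $G_0=\la\alpha_\theta\ra$ diagonal with $\theta$ a primitive $(q+1)/9$-th root of unity). The paper splits according to whether $G\setminus G_0$ contains an element of order $3$: if so, it builds the overgroup $\bar G=\la\alpha_{\theta'}\ra\rtimes\la h\ra$ with $(\theta')^3=\xi$ and repeats the fixed-point count of Proposition \ref{triangolo}, concluding via the incongruence of $|\cH_q/G(\fqs)|$ and $|\cC_{2^n}(\fqs)|$ modulo $3$ (which also invokes Theorem \ref{GGSpuntofisso}); if not, $G$ is diagonal and the paper constructs a larger semiregular diagonal overgroup and applies Corollary \ref{cor1}. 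You instead split on the action of $G$ on the triangle and get direct contradictions with the hypotheses themselves: in the pointwise-fixing case you show every element of $G$ is diagonal with entries that are $(q+1)/3$-th roots of unity, hence cubes among the $(q+1)$-th roots of unity, so $\det$ is a cube and $G\leq\PSU(3,q)$, against the hypothesis (this is exactly the argument the paper uses in the corresponding step of Proposition \ref{triangolofuori}, so it is certainly sound); in the $3$-cycle case you apply the eigenvector computation of Proposition \ref{triangolo} not to an element of an auxiliary overgroup but to the element $h\in G\setminus\PSU(3,q)$ itself, whose monomial form and non-cube determinant give three fixed points on $\cH_q$, contradicting the semiregularity of $G$ supplied by Lemma \ref{gradofisso}. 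Your route is shorter: it needs no overgroup, no rational-point count, and no appeal to Theorem \ref{GGSpuntofisso}; what the paper's route buys is uniformity with the neighbouring propositions, where the same overgroup-plus-counting machinery is reused. Your care over the $\PSU$ membership criterion (determinant a cube, using $\gcd(3,q-1)=1$ so that being a cube in $\fqs^*$ and among the $(q+1)$-th roots of unity coincide) is exactly the point that needs checking, and it matches the paper's own implicit use of that criterion. Two cosmetic remarks: in the diagonal case the claim that $\lambda,\mu$ have order exactly $(q+1)/3$ is not needed (order dividing $(q+1)/3$ is automatic from $|G|=(q+1)/3$ and already gives the cube conclusion), and your monomial-order computation is the one carried out for $k$ in Proposition \ref{punto-retta} rather than in Proposition \ref{triangolo}.
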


\begin{proof}
Let $G'=G\cap\PSU(3,q)$. Since $\PSU(3,q)$ has prime index $3$ in $\PGU(3,q)$, then $\PGU(3,q)=G\cdot\PSU(3,q)$, hence $[G:G']=3$, and $G'$ is normal in $G$ by Lemma \ref{indice}.

Arguing as in the proof of Proposition \ref{punto-retta}, $G'=\la\alpha_\xi\ra$ is cyclic, where $\xi$ is a primitive $(q+1)/9$-th root of unity, $\alpha_\xi$ is defined in \eqref{alpha} and fixes the fundamental points, and $G$ stabilizes the fundamental triangle $T$.

Suppose there exists $h\in G\setminus G'$ of order $3$. Arguing as for $k$ in Proposition \ref{triangolo}, $G=G'\rtimes\la h\ra$. Let $\theta\in\fqs$ with $\theta^3=\xi$, we define $\bar{G'}$ as the cyclic group generated by $\alpha_\theta$ (given in \eqref{alpha}), and $\bar G$ as the group generated by $\bar{G'}$ and $h$.
Again, it is easily seen that $\bar G=\bar{G'}\rtimes\la h\ra$; moreover, $G'$ is normal in $\bar{G'}$ and $G$ is normal in $\bar G$ with indeces $[\bar G:G]=[\bar{G'}:G']=3$. We can repeat the same argument as in the proof of Proposition \ref{triangolo} after replacing $N$ with $G'$ and $\bar N$ with $\bar{G'}$; in this way we obtain that $|\cH_q/G(\fqs)|\equiv1,2\,(\mod\,3)$, while $|\cC_{2^n}|\equiv0\,(\mod\,3)$. This yields the thesis.

Now suppose there is no $h\in G\setminus G'$ of order $3$. This fact implies that $G$ is made of diagonal matrices, since $G$ acts on $T$. Then, by Theorem \ref{Di}, $G$ is cyclic and $G=\la\alpha_\theta\ra$, where the notations are the same as above. We define the diagonal group $\bar G=\la\alpha_\zeta\ra$, with $\zeta^3=\theta$. $G$ is normal in $\bar G$ of index $3$ and$\bar G$ is semiregular on $\cH_q$, hence Corollary \ref{cor1} yields the thesis.
\qed
\end{proof}

\begin{proposition}\label{triangolofuori}
Suppose $G\not\subseteq\PSU(3,q)$ and a maximal subgroup of $\PSU(3,q)$ containing $G\cap\PSU(3,q)$ satisfies case $\mathit{iii)}$ in Theorem \ref{MH}. Then $\cC_{2^n}\not\cong\cH_q/G$.
\end{proposition}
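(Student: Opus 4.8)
The plan is to mimic the structure of Proposition~\ref{triangolofuori}'s sibling, Proposition~\ref{triangolo}, passing to the subgroup $G'=G\cap\PSU(3,q)$ and splitting according to whether $G$ contains an element of order $3$ swapping the vertices of the self-polar triangle. First I would set $G'=G\cap\PSU(3,q)$; since $\PSU(3,q)$ has prime index $3$ in $\PGU(3,q)$, either $G\subseteq\PSU(3,q)$ (excluded by hypothesis) or $[G:G']=3$, and then $G'\triangleleft G$ by Lemma~\ref{indice}. By hypothesis a maximal subgroup of $\PSU(3,q)$ containing $G'$ is of type $\mathit{iii)}$, so $G'$ stabilizes the fundamental triangle $T=\{P_1,P_2,P_3\}$, and after conjugation $T$ is the fundamental triangle. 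Let $N$ be the pointwise stabilizer of $T$ in $G$; as in the earlier propositions $N\triangleleft G$, $N$ embeds in $\PGL(2,q^2)\cong\PSL(2,q^2)$ via the action on any vertex's polar line, and since $|N|$ is odd, Theorem~\ref{Di} forces $N$ cyclic, generated by some $\alpha_\xi$ as in \eqref{alpha} with $\xi$ a primitive root of unity of the appropriate order dividing $(q+1)/3$. The quotient $G/N$ acts faithfully on $T$, hence has order $1$, $2$, $3$, or $6$; oddness of $|G|$ leaves $|G/N|\in\{1,3\}$.

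If $G=N$ then $G$ is diagonal, fixes a fundamental point off $\cH_q$ together with its polar $(q+1)$-secant, and Proposition~\ref{puntorettafuori} applies directly to give $\cC_{2^n}\not\cong\cH_q/G$. So assume $[G:N]=3$. Since $[G:G']=3$ as well, and $N\subseteq G'$ (as $N$ consists of diagonal matrices, whose determinant is automatically a cube for the elements of $\alpha_\xi$-type — or more simply because $G'\triangleleft G$ has index $3$ and $N$ cannot equal $G'$ unless... ) — here I would argue carefully: we have the two normal subgroups $N$ and $G'$ of $G$, both of index $3$; if $N\ne G'$ then $N\cap G'$ has index $9$ in $G$ and $G/(N\cap G')\cong \mathbf Z_3\times\mathbf Z_3$, but $G/N$ is cyclic of order $3$ acting faithfully on $T$ while $G'/(N\cap G')$ would be a normal complement, which one can rule out or else reduce to the diagonal case. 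The cleaner route is: pick $h\in G\setminus G'$; since $[G:G']=3$ and $G'\triangleleft G$, $h$ normalizes $G'$, and $h$ acts on $T$ (because $G'$ and hence $G$ stabilizes $T$). If $h$ fixes $T$ pointwise then $h\in N\subseteq G'$, contradiction; so $h$ is a $3$-cycle on $T$. Thus in the $[G:N]=3$ case we may take the generator $h$ of $G/N$ to lie outside $G'$ and to act as a $3$-cycle on the vertices.

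Now the heart of the argument is the eigenvalue/fixed-point count, exactly as in Proposition~\ref{triangolo}, but tracking the coset $G\setminus G'$ rather than $\bar G\setminus\PSU(3,q)$. Write $G=N\rtimes\langle h\rangle$, put $\theta\in\fqs$ with $\theta^3=\xi$, let $\bar N=\langle\alpha_\theta\rangle$ and $\bar G=\langle\bar N,h\rangle=\bar N\rtimes\langle h\rangle$; then $N\triangleleft\bar N$ and $G\triangleleft\bar G$ with $[\bar N:N]=[\bar G:G]=3$ (normality from Lemma~\ref{indice}). Exactly the same computation as in Proposition~\ref{triangolo} shows that each element of the form $\bar n h$ or $\bar n h^2$ with $\bar n\in\bar N\setminus N$ is represented by an anti-diagonal-type matrix whose cube is scalar equal to its determinant (an element of $\fqs^*$), so in characteristic $2$ it has three distinct eigenvalues $z, zx, z(x+1)$ with $x^2+x+1=0$, $z^3=\det$, and one checks each of the three eigenvectors lies on $\cH_q$ by the same norm computation $z^{q+1}+z^{2(q+1)}+1=(z^{3(q+1)}-1)/(z^{q+1}-1)=0$. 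Counting incidences $|I|=2(|\bar N|-|N|)\cdot 3 = 4(q+1)/3$, and since $\bar N$ and $G$ are semiregular on $\cH_q$ while stabilizers of $\bar G$-points have order in $\{1,3\}$, we get exactly $m=2(q+1)/3$ points of $\cH_q$ fixed by some nontrivial element of $\bar G$, forming two $G$-orbits. Hence $\bar G/G$ has two fixed points on $\cH_q/G$ and all other orbits of length $3$, so $|\cH_q/G(\fqs)|\equiv 1$ or $2\pmod 3$ by Theorem~\ref{GGSpuntofisso}, whereas $|\cC_{2^n}(\fqs)|=4q^2-4q+1\equiv 0\pmod 3$. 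Therefore $\cH_q/G\not\cong\cC_{2^n}$.

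The main obstacle I anticipate is purely bookkeeping rather than conceptual: making airtight the claim that in the $[G:N]=3$ branch one can always choose the order-$3$ ``triangle-rotating'' element to lie outside $G'$ (i.e.\ that $N=G'$), and correctly identifying the orders of $N$, $\bar N$, etc.\ as divisors of $(q+1)/3$ and $(q+1)$ — the earlier propositions used $(q+1)/9$ in the analogous spot, and one must check which denominator is forced here by semiregularity of $G$ on $\cH_q$ together with $[G:N]=3$ and $[G:G']=3$. Once the group-theoretic skeleton is pinned down, the eigenvector computation over $\fqs$ is verbatim from Proposition~\ref{triangolo} and the final congruence contradiction with $|\cC_{2^n}(\fqs)|=4q^2-4q+1$ goes through unchanged.
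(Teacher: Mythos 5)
Your overall strategy coincides with the paper's: reduce to $G=\la\alpha_\xi\ra\rtimes\la h\ra$ with $h$ of order $3$ cyclically permuting the vertices of the self-polar triangle and lying outside $\PSU(3,q)$, pass to $\bar G=\la\alpha_\theta\ra\rtimes\la h\ra$ with $\theta^3=\xi$, and rerun the incidence count of Proposition \ref{triangolo} to get $|\cH_q/G(\fqs)|\equiv1,2\pmod 3$ against $|\cC_{2^n}(\fqs)|\equiv0\pmod3$. The one step you assert but do not prove is the crucial one: that $G$, and not merely $G'=G\cap\PSU(3,q)$, stabilizes the triangle $T$ (you write ``because $G'$ and hence $G$ stabilizes $T$''). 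The hypothesis only places $G'$ inside a maximal subgroup of case \textit{iii)} of Theorem \ref{MH}, so a priori an element of $G\setminus G'$ could move $T$; normality of $G'$ in $G$ alone does not settle this, and without it neither your definition of $N$ as the pointwise stabilizer of $T$ in $G$ nor the action of $G/N$ on $T$ is licensed. This is exactly where the paper does its work: it first applies the argument of Proposition \ref{triangolo} to $G'$, concluding that either $G'=\la\alpha_\xi\ra$ is cyclic, or $G'=\la\alpha_\eta\ra\rtimes\la h\ra$ with $\la\alpha_\eta\ra$ the unique, hence characteristic, subgroup of order $(q+1)/27$; in either case one has a cyclic diagonal subgroup normal in $G$ whose fixed points are exactly the three vertices (no homologies occur, by semiregularity), so $G$ permutes them. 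Your proof needs this, or an equivalent argument, inserted at that point.

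Once that is supplied, the rest of your plan is sound and matches the paper, and your parenthetical determinant argument resolves your own bookkeeping worries: every diagonal element of $G$ has order dividing $(q+1)/3$, so its entries are $(q+1)/3$-rd roots of unity, its determinant is a cube, and it lies in $\PSU(3,q)$; hence $N\subseteq G'$, the case $G=N$ is impossible outright (it would force $G\subseteq\PSU(3,q)$, contrary to hypothesis, so the detour through Proposition \ref{puntorettafuori} is not needed), and in the case $[G:N]=3$ one gets $|N|=(q+1)/9$ with $\xi$ a primitive $(q+1)/9$-th root of unity, exactly as in Proposition \ref{triangolo}. One more point to make explicit when you call the eigenvector computation ``verbatim'': you need $\det(\bar n h)$ to be a non-cube so that $z\notin\fqs$, $z^{q+1}\neq1$, and the three fixed points lie on $\cH_q$. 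In Proposition \ref{triangolo} this came from $\bar G\cap\PSU(3,q)=G$, which fails in the present setting (here $\bar G\cap\PSU(3,q)=\bar N$); the conclusion still holds, but for a slightly different reason, namely that $\det(\bar n)$ is always a cube while $\det(h)$ is not, since $h\in G\setminus G'$.
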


\begin{proof}
As above, $G'=G\cap\PSU(3,q)$ is normal in $G$ of index $3$. By applying to $G'$ the argument of Proposition \ref{triangolo}, we get that either $G'$ is cyclic and $G'=\la\alpha_\xi\ra$, or $G'=\la\alpha_\eta\ra\rtimes\la h\ra$, where $\eta$ is a primitive $(q+1)/27$-th root of unity and $h$ is an element of order $3$ acting as a $3$-cycle on the fundamental triangle $T$.

Consider the case $G'=\la\alpha_\xi\ra$. Since $G'$ is normal in $G$, then $G$ acts on $T$. If $G$ fixed $T$ pointwise, then $G$ would be made of diagonal matrices whose non-zero coefficients are cubes in $\fqs$ being $(q+1)/3$-th roots of unity, hence $G\leq\PSU(3,q)$, against the hypothesis. Then, arguing as above, $G=G'\rtimes\la h\ra$, where $h\in G\setminus G'$ has order $3$. Let $\theta\in\fqs$ with $\theta^3=\xi$, and define $\bar G=\la\alpha_\theta\ra\rtimes\la h\ra$.
Arguing as in Proposition \ref{triangolo}, we obtain that $|\cH_q/G(\fqs)|\equiv1,2\,(\mod\,3)$, while $|\cC_{2^n}|\equiv0\,(\mod\,3)$. This yields the thesis.

Now consider the case $G'=\la\alpha_\eta\ra\rtimes\la h\ra$. $\la\alpha_\eta\ra$ is the only subgroup of $G'$ of order $(q+1)/27$, then $\la\alpha_\eta\ra$ is a characteristic subgroup of $G'$; also, $G'$ is normal in $G$. Therefore $\la\alpha_\eta\ra$ is normal in $G$, hence $G$ acts on the fundamental points.
Let $G''$ be the subgroup of $G$ fixing $T$ pointwise; $G''$ is normal in $G$ of index $3$, and $G=G''\rtimes\la h\ra$. Being made of diagonal matrices, $G''$ is abelian, with a subgroup $G'$ of index $3$. By the primary decomposition of abelian groups, we have either $G''=\la\alpha_\xi\ra$ with $\xi^3=\eta$, or $G''=\la\alpha_\eta\ra\times\la h'\ra$, with $h'\in G''\setminus\la\alpha_\eta\ra$ a diagonal matrix of order $3$.
In the latter case, by ${h'}^3=id$ we get that $\det(h')^3=1$ and then $\det(h')$ is a cube in $\fqs$, hence $h'\in G\cap\PSU(3,q)=G'$; therefore $G'=G''$, against the fact that $h$ is a $3$-cycle on $T$.

Then $G''=\la\alpha_\xi \ra$, and $G=\la\alpha_\xi \ra\rtimes\la h\ra$. Let $\bar G=\la\alpha_\theta\ra\rtimes\la h\ra$, where $\theta\in\fqs$ satisfies $\theta^3=\xi$. We can repeat the same argument as in the proof of Proposition \ref{triangolo} after replacing $N$ with $\la\alpha_\xi \ra$ and $\bar N$ with $\la\alpha_\theta\ra$; in this way we obtain that $|\cH_q/G(\fqs)|\equiv1,2\,(\mod\,3)$, while $|\cC_{2^n}|\equiv0\,(\mod\,3)$.
\qed
\end{proof}

\begin{lemma}\label{lemmino}
Suppose $G\leq\PSU(3,q)$ and any maximal subgroup $M$ of $\PSU(3,q)$ containing $G$ does not satisfy case $\mathit{ii)}$ nor case $\mathit{iii)}$ in Theorem \ref{Di}. Then $M$ satisfies only case $\mathit{xiv)}$, i.e. $G\not\subseteq\PSU(3,2^m)$ and $M$ contains $\PSU(3,2^m)$ as a normal subgroup of index $3$, where $n=3m$.
\end{lemma}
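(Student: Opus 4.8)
The plan is to go through the list of maximal subgroups of $\PSU(3,q)$ in Theorem \ref{MH} — recalling that $q=2^n$ with $n\geq5$ odd, so only items \emph{i)}--\emph{iv)}, \emph{xiii)} and \emph{xiv)} can occur — and eliminate every case except \emph{xiv)} by an order or structural argument. We already know, from Lemma \ref{gradofisso}, that $G$ has odd order $(q+1)/3$ and acts semiregularly on $\cH_q$; this divisibility constraint is the main tool. Cases \emph{ii)} and \emph{iii)} are excluded by hypothesis. Case \emph{i)}, the stabilizer of an $\fqs$-rational point of $\cH_q$, has order $q^3(q^2-1)/d$, and since $G$ is semiregular it intersects trivially the (nontrivial) stabilizer in $M$ of that point, forcing $G$ to embed in a group acting on the line/point configuration; more simply, an order count shows $\gcd\bigl((q+1)/3,\,q^3(q^2-1)/d\bigr)$ still admits $G$, so here the argument is instead that the point fixed by $M$ is fixed by $G$, and $G$ would then fix an $\fqs$-rational point of $\cH_q$, contradicting semiregularity of $G$ unless $G$ is trivial (which it is not, as $(q+1)/3>1$ for $n\geq5$). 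For case \emph{iv)}, $M$ is the normalizer of a Singer cyclic group and has order $3(q^2-q+1)/d$; since $\gcd(q+1,\,q^2-q+1)=\gcd(q+1,3)=d$, the odd number $(q+1)/3$ cannot divide $3(q^2-q+1)/d$ once $(q+1)/3>3$, i.e. for $n\geq5$, so this case is impossible.

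It remains to handle the subfield-type subgroups \emph{xiii)} and \emph{xiv)}. In case \emph{xiii)}, $M\cong\PSU(3,2^m)$ with $m\mid n$ and $n/m$ an odd prime; here one notes that $(q+1)/3=(2^n+1)/3$ must divide $|\PSU(3,2^m)|=2^{3m}(2^{3m}+1)(2^{2m}-1)/d'$. Using $2^n+1=(2^m+1)\bigl(2^{m(n/m-1)}-\cdots+1\bigr)$ together with standard facts about $\gcd(2^a+1,2^b-1)$ and $\gcd(2^a+1,2^b+1)$ in terms of the $2$-adic and odd parts of $a,b$, one shows that the large primitive prime divisors of $2^n+1$ (which exist by Zsygmondy/Bang, given $n\geq5$) do not divide $|\PSU(3,2^m)|$; this kills case \emph{xiii)}. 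Finally, in case \emph{xiv)} the hypothesis $n=3m$ with $m$ odd is exactly what is recorded in the statement, and one checks that $G$ cannot already lie in the index-$3$ normal subgroup $\PSU(3,2^m)$: if it did, $M$ would not be the \emph{smallest} maximal subgroup containing $G$ — rather $\PSU(3,2^m)$, or a maximal subgroup of it from list \emph{i)}--\emph{iv)}, would contain $G$, and the previous paragraphs already excluded those; hence $G\not\subseteq\PSU(3,2^m)$, as claimed.

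The main obstacle I expect is the number-theoretic bookkeeping in case \emph{xiii)} (and the tail of \emph{iv)}): one must be careful that the relevant primitive prime divisor of $2^n+1$ really fails to divide $2^{2m}-1$ and $2^{3m}+1$ for every admissible $m$, and this needs the precise description of $\gcd(2^a\pm1,2^b\pm1)$ together with the parity conditions ($n$ odd, $n/m$ an odd prime). The other cases reduce cleanly to the semiregularity of $G$ (case \emph{i)}) or to a one-line divisibility failure (case \emph{iv)}), and cases \emph{ii)}, \emph{iii)} are assumed away, so once the subfield case is pinned down the lemma follows immediately.
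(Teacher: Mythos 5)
Your handling of cases \emph{i)}, \emph{iv)} (and the vacuity of \emph{xv)}, since $k=n\geq5$) matches the paper, but there is a genuine gap at the heart of the lemma: the subfield case with $n=3m$. Your Zsygmondy argument for case \emph{xiii)} produces a primitive prime divisor $r$ of $2^{2n}-1$ which divides $(2^n+1)/3$ and avoids $2^{2m}-1$; but the claim that it also avoids $2^{3m}+1$ only holds when $n/m\geq5$. When $n=3m$ one has $2^{3m}+1=2^{n}+1$, and since $m$ is odd, $\gcd(3,2^m+1)=3$, so $|\PSU(3,2^m)|=2^{3m}(2^{3m}+1)(2^{2m}-1)/3$ is divisible by the full $|G|=(2^n+1)/3$. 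Hence no Lagrange/order count can exclude $G\subseteq\PSU(3,2^{n/3})$, and this containment is exactly what the conclusion $G\not\subseteq\PSU(3,2^m)$ must rule out, whether you file $\PSU(3,2^m)$ under case \emph{xiii)} or view it as the normal subgroup inside the case-\emph{xiv)} group $M$.

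Your fallback for that containment --- ``then $\PSU(3,2^m)$, or a maximal subgroup of it from list \emph{i)}--\emph{iv)}, would contain $G$, and the previous paragraphs already excluded those'' --- does not work: what you excluded earlier were maximal subgroups of $\PSU(3,q)$ of those types, not maximal subgroups of $\PSU(3,2^m)$, and the two are not interchangeable (the relevant orders are different, and a maximal subgroup of $\PSU(3,2^m)$ containing $G$ could just as well be of type \emph{xiii)}, \emph{xiv)} or \emph{xv)}); also no ``smallest maximal subgroup'' argument is available. The paper closes precisely this gap by applying Theorem \ref{MH} a second time, inside $\PSU(3,2^m)$: sub-case \emph{i)} is killed geometrically (a point fixed by $G$ either lies on $\cH_q$, contradicting semiregularity, or lies off $\cH_q$, which places $G$ in a top-level case-\emph{ii)} subgroup, excluded by hypothesis), while sub-cases \emph{ii)}, \emph{iii)}, \emph{iv)}, \emph{xiii)}, \emph{xiv)}, \emph{xv)} are excluded by fresh divisibility estimates, e.g.\ $(2^{3m}+1)/3 \nmid 2^m(2^m-1)(2^m+1)^2/3$ and $(2^{3m}+1)/3 > 2^{2m}-2^m+1$. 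Some such second-level analysis is indispensable; as written, your proposal does not prove the lemma in the case $n=3m$.
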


\begin{proof}
We can exclude cases $ii)$ and $iii)$ by hypothesis, case $i)$ by the semiregularity of $G$, and cases $iv)$ and $xv)$ because $|G|$ is not a divisor of their orders. Then the thesis follows if we exclude case $xiii)$. To this end, we apply again Theorem \ref{MH} to $\PSU(3,2^m)$, where $n=p'm$ with $p'\geq3$ an odd prime. Note that since $n\geq5$ is odd, then either $p'\geq5$, or $p'=3$ and $m\geq5$.

Case $i)$. $G$ fixes a point $P\in\cH_{2^m}$. Then $P\notin\cH_q$ by the semiregularity of $G$ on $\cH_q$, hence $G$ satisfies case $ii)$ in the list of maximal subgroups of $\PSU(3,q)$, contradicting the hypothesis.

Case $ii)$. By Lagrange's theorem, the order $(2^{p'm}+1)/3$ of $G$ divides $2^m(2^m-1)(2^m+1)^2/3$, hence $\sum_{i=0}^{p'-1}(-1)^i 2^{im}$ divides $(2^{2m}-1)$, which is impossible for any odd $p'\geq3$.

Case $iii)$. Now $(2^{p'm}+1)/3$ divides $2(2^m+1)^2$, hence $\sum_{i=0}^{p'-1}(-1)^i 2^{im}$ divides $3(2^m+1)$, which is impossible since $\sum_{i=0}^{p'-1}(-1)^i 2^{im}>3(2^m+1)$.

Case $iv)$. Now $(2^{p'm}+1)/3$ divides $(2^{2m}-2^m+1)$, which is impossible since $(2^{p'm}+1)/3>2^{2m}-2^m+1$ for any $p'\geq3$, $m\geq3$.

Case $xiii)$. $G$ is contained in $\PSU(3,2^r)$ with $m/r=p''\geq3$ an odd prime, and $n/r=p'p''\geq9$. This is impossible since $|G|$ is greater than the order of any maximal subgroup of $\PSU(3,2^r)$.

Case $xiv)$. $G$ is contained in a group $K$ containing $\PSU(3,2^r)$ as a normal subgroup of index $3$, where $r=m/3$. If $H\neq\PSU(3,2^r)$ is a maximal subgroup of $K$, we have $H\cdot\PSU(3,2^r)=\PGU(3,2^r)$, hence $[H:H\cap\PSU(3,2^r)]=[\PGU(3,2^r):\PSU(3,2^r)]=3$. Therefore, $|H|/3$ divides the order of a maximal subgroup of $\PSU(3,2^r)$. Then we get a contradiction, since $|G|$ does not divide three times the order of any maximal subgroup of $\PSU(3,2^r)$.

Case $xv)$. $|G|$ divides $36$, and $m=1$, which implies $p'\geq5$. For $p'=5$, we have $|G|=11$ which does not divide $36$; for $p'>5$, we have that $|G|$ is greater than $36$.
\qed
\end{proof}

\begin{proposition}\label{quattordici}
Suppose $G\leq\PSU(3,q)$ and a maximal subgroup $M$ of $\PSU(3,q)$ containing $G$ satisfies only case $xiv)$ in Theorem \ref{Di}. Then $\cC_{2^n}\not\cong\cH_q/G$.
\end{proposition}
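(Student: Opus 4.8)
The plan is to use the standing hypothesis, which by Lemma~\ref{lemmino} means $n=3m$ with $m$ odd and $G$ is contained in a maximal subgroup $M$ of $\PSU(3,q)$ having $S:=\PSU(3,2^m)$ as a normal subgroup of index $3$, while $G\not\subseteq S$ (here $M$ is the unique index-$3$ overgroup of $S$ in $\PSU(3,q)$, i.e. a conjugate of $\PGU(3,2^m)$). I would set $G'=G\cap S$. Since $[M:S]=3$ is prime and $G\not\subseteq S$, $GS=M$, hence $[G:G']=3$ and $G'\triangleleft G$ by Lemma~\ref{indice}, with $|G'|=(2^{3m}+1)/9$ (note $9\mid 2^{3m}+1$ because $m$ is odd). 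A preliminary observation to record first: since the integer $3$ is odd, the conjugation $x\mapsto x^{2^{3m}}$ of $\F_{2^{6m}}$ restricts on $\F_{2^{2m}}$ to $x\mapsto x^{2^m}$, so the Hermitian form of $\cH_{2^m}$ is the restriction of that of $\cH_q=\cH_{2^{3m}}$; therefore $\cH_{2^m}(\F_{2^{2m}})\subseteq\cH_q(\fqs)$, and the semiregularity of $G$ on $\cH_q$ forces $G'$ to be semiregular on $\cH_{2^m}$.

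Next I would locate $G'$ inside $S=\PSU(3,2^m)$ by running Theorem~\ref{MH} exactly as in Lemma~\ref{lemmino}. Cases $i)$ (by the semiregularity just noted), $v)$–$xii)$ (as $p=2$) and $xv)$ (as $m\ge 3$) drop out at once, and a divisibility computation — keeping careful track of the ubiquitous factor $3$ — shows that $|G'|=(2^{3m}+1)/9$ divides neither the order $2^m(2^m-1)(2^m+1)^2/3$ of a case-$ii)$ subgroup nor the order $2(2^m+1)^2$ of a case-$iii)$ subgroup; cases $xiii)$ and $xiv)$ are excluded by iterating the same argument one level further, since a group of order $(2^{3m}+1)/9$ (or of a third of that) cannot be embedded in any maximal subgroup of $\PSU(3,2^r)$ for a proper $r\mid m$. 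For $m\ge 5$ the order $2^{2m}-2^m+1$ of a case-$iv)$ Singer-normalizer subgroup is likewise too small to contain $G'$, so then no maximal subgroup of $S$ contains $G'$: case $xiv)$ is vacuous for $n\ge 15$. There remains the single value $n=9$ (so $m=3$, $q=2^9$), where $|G'|=57=2^6-2^3+1$ equals the order of a case-$iv)$ subgroup, forcing $G'$ to be the normalizer $N(C)$ of a cyclic Singer group $C$ of order $19$ in $\PSU(3,8)$; in particular $C$, the unique Sylow-$19$ subgroup, is characteristic in $G'$, the quotient $G'/C$ acts faithfully on $C$, $G'$ is nonabelian, and $|G|=171$.

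For $n=9$ the finish is geometric. As $C$ is characteristic in $G'\triangleleft G$ we get $C\triangleleft G$; the Singer group $C$ fixes exactly three points $P_1,P_2,P_3$ of $\PG(2,q^2)$, a triangle, none of which lies on $\cH_q$ (else $C\le G$ would fail to be semiregular). Because $C\triangleleft G$, $G$ permutes $\{P_1,P_2,P_3\}$, and since $C$ fixes each $P_i$ this action factors through $G/C$, a group of order $9$; hence its image in $\mathbf{S}_3$ is trivial or of order $3$. If it is trivial, $G$ fixes the $\fqs$-rational point $P_1$, which is off $\cH_q$, together with its polar line (a $(q+1)$-secant), so $G$ sits in a case-$ii)$ maximal subgroup of $\PSU(3,q)$ and Proposition~\ref{punto-retta} yields $\cC_{2^9}\not\cong\cH_q/G$. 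If the image has order $3$, its kernel $K$ has order $57$ and fixes the triangle pointwise, hence is simultaneously diagonalizable and in particular abelian; comparing $K$ with the nonabelian $G'$ and with the structure of the normalizer of $C$ in $\PGU(3,8)$ (which already has order $171$) identifies $G$ with that full normalizer, $G=\tilde C\rtimes\langle h\rangle$ with $\tilde C$ the order-$57$ Coxeter torus of $\PGU(3,8)$ and $h$ of order $3$. I would then exhibit an overgroup $\bar G$ with $G\triangleleft\bar G$ of index $3$, obtained by enlarging the cyclic group $\tilde C$ inside the split torus of $\PGU(3,q)$ in an $h$-stable fashion, and either invoke Corollary~\ref{cor1} (should $\bar G$ turn out semiregular on $\cH_q$) or, arguing verbatim as in Proposition~\ref{triangolo}, count the fixed points on $\cH_q$ of the elements of $\bar G\setminus G$ (each of the shape ``a torus element composed with a $3$-cycle on $\{P_1,P_2,P_3\}$'') to force $|(\cH_q/G)(\fqs)|\not\equiv 0\pmod 3$, which contradicts $|\cC_{2^9}(\fqs)|\equiv 0\pmod 3$.

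The main obstacle is precisely this $n=9$ case: determining the exact isomorphism type and matrix realization of the order-$171$ group $G$ from its action on the Singer triangle, and then producing an overgroup $\bar G$ that genuinely normalizes $G$ and has the right fixed-point pattern on $\cH_q$, so that either Corollary~\ref{cor1} applies or the $\bmod\,3$ count of Proposition~\ref{triangolo} carries over word for word. By comparison, the divisibility bookkeeping of the second paragraph — which makes case $xiv)$ collapse for $n\ge 15$ — is routine, provided one stays attentive to the repeated factors of $3$.
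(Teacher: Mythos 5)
Your reduction is exactly the paper's: the same divisibility sweep over the maximal subgroups of $\PSU(3,2^m)$ (you compare $|G\cap\PSU(3,2^m)|=(2^{3m}+1)/9$ with the orders in Theorem \ref{MH}, the paper compares $|G|=(2^{3m}+1)/3$ with three times those orders, which is the same computation) leaves only the Singer-normalizer case with $m=3$, hence $n=9$, and your identification of $G\cap\PSU(3,2^3)$ with the order-$57$ normalizer of a Singer group $C$, of the triangle of fixed points of $C$ as $\F_{2^{18}}=\fqs$-rational points off $\cH_q$, and of $|G|=171$ all match the paper. Up to that point the proposal is correct and follows the same route.

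The gap is the $n=9$ endgame, which you yourself flag as ``the main obstacle'' and do not carry out: you never actually produce the overgroup $\bar G$, never verify its fixed-point pattern on $\cH_q$, and leave open whether Corollary \ref{cor1} or the count of Proposition \ref{triangolo} is the one that applies — so the decisive contradiction is not established. The paper closes this step by observing that $\{Q_1,Q_2,Q_3\}$ is a self-polar triangle stabilized by $G$ and replaying the second case of Proposition \ref{triangolofuori}: the pointwise stabilizer of the triangle in $G$ is diagonal, abelian of order $57$, hence cyclic, $G=\la\alpha_\xi\ra\rtimes\la h\ra$ with $\xi$ of order $57$, one sets $\bar G=\la\alpha_\theta\ra\rtimes\la h\ra$ with $\theta^3=\xi$ and reruns the fixed-point count to get $|(\cH_q/G)(\fqs)|\equiv1,2\pmod 3$ against $|\cC_{2^9}(\fqs)|\equiv0\pmod 3$. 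Be aware that the point you left open is genuinely delicate, which is worth making explicit if you finish the argument: a monomial element $h$ inducing a $3$-cycle on the triangle normalizes a diagonal cyclic group $\la\mathrm{diag}(\theta,\theta^i,1)\ra$ only if $i^2-i+1\equiv0$ modulo $\mathrm{ord}(\theta)$, and this congruence has no solution modulo $9$, hence none modulo $171=9\cdot19$; consequently ``enlarging $\tilde C$ $h$-stably inside the torus'' cannot produce a cyclic group of order $171$, and the unique $h$-stable index-$3$ enlargement of $K$ inside the diagonal torus is non-cyclic and contains order-$3$ homologies fixing $q+1$ points of $\cH_q$ each, so neither Corollary \ref{cor1} nor a word-for-word copy of the count in Proposition \ref{triangolo} applies to it. Completing your proof therefore requires either justifying the normalization implicit in the $\bar G=\la\alpha_\theta\ra\rtimes\la h\ra$ construction or redoing the Riemann--Hurwitz/orbit count for the non-cyclic enlargement; as written, that step is missing.
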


\begin{proof}
$M$ contains $\PSU(3,2^m)$ as a normal subgroup of order $3$, where $m=n/3$. Arguing as in case $xiv)$ of Lemma \ref{lemmino}, $|G|$ divides three times the order of a maximal subgroup of $\PSU(3,2^m)$. Then we multiply by $3$ the orders of maximal subgroups of $\PSU(3,q)$ as listed in Theorem \ref{Di}.

Case $i)$. The order $(2^{3m}+1)/3$ of $G$ divides $2^{3m}(2^{2m}-1)$, hence $(2^{2m}-2^m+1)$ divides $3(2^m-1)$, which is impossible since $m\geq3$.

Case $ii)$. $(2^{3m}+1)/3$ divides $2^m(2^m+1)^2(2^m-1)$, which is impossible as above.

Case $iii)$. $(2^{3m}+1)/3$ divides $6(2^m+1)^2$, hence $(2^{2m}-2^m+1)$ divides $9(2^m+1)$, which is impossible for any $m\geq3$.

Case $iv)$. $(2^{3m}+1)/3$ divides $3(2^{2m}-2^m+1)$, hence $(2^m+1)\mid9$, which implies $m=3$.

Cases $xiii)$ and $xiv)$. $(2^{3m}+1)/3$ divides either $3\cdot|PSU(3,2^r)|$ or $3\cdot|PGU(3,2^r)|$, where $m/r=p''\geq3$ is an odd prime. As in the proof of Lemma \ref{lemmino}, this is impossible since $|G|$ exceeds three times the order of any subgroup of $\PGU(3,2^r)$.

Case $xv)$. $(2^{3m}+1)/3$ divides $36$, which is impossible for any $m\geq3$.

Therefore the only possibility is given in case $iv)$ for $m=3$. Then $G$ has order $171$, $G''=G\cap\PSU(3,2^m)$ has order $|G|/3=57$, and $G''$ is contained in the normalizer $N$ of a cyclic Singer group $S$, of order $|N|=(2^{2m}-2^m+1)=57$, hence $G''=N$. $G''$ acts on the three non-collinear points $Q_1,Q_2,Q_3$ fixed by $S$, whose coordinates are in a cubic extension of $\mathbb{F}_{2^{2m}}$, hence in $\mathbb{F}_{2^{2n}}$. By the semiregularity of $G$, we have $Q_i\notin\cH_q$; then $\left\{Q_1,Q_2,Q_3\right\}$ is a self-polar triangle, and we get the thesis as in the proof of Proposition \ref{triangolofuori}, after replacing $q$ with $2^m$ and $G'$ with $G''$.
\qed
\end{proof}

\begin{theorem}
$\cC_{2^n}$ is not a Galois subcover of the Hermitian curve $\cH_q$.
\end{theorem}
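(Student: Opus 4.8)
The plan is to assemble into one argument the case analysis carried out in the preceding propositions. Suppose for contradiction that $\cC_{2^n}\cong\cH_q/G$ for some $G\leq\aut(\cH_q)\cong\PGU(3,q)$. By Lemma \ref{gradofisso}, $|G|=(q+1)/3$ and $G$ acts semiregularly on $\cH_q$. Since $n$ is odd, $q=2^n\equiv2\pmod 3$, so $\gcd(q+1,3)=3$ and $\PSU(3,q)$ has (prime) index $3$ in $\PGU(3,q)$. I would split according to whether $G\leq\PSU(3,q)$ or not, and in each case determine which maximal subgroup of $\PSU(3,q)$ from Theorem \ref{MH} contains $G$, respectively $G\cap\PSU(3,q)$, then invoke the relevant proposition.

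First, suppose $G\leq\PSU(3,q)$, and let $M$ be a maximal subgroup of $\PSU(3,q)$ containing $G$. As $p=2$, only cases $i)$–$iv)$ and $xiii)$–$xv)$ of Theorem \ref{MH} can occur. Case $i)$ is excluded since it would force $G$ to fix a point of $\cH_q$, against semiregularity; cases $iv)$ and $xv)$ are excluded by a direct divisibility check (for $iv)$, writing $a=(q+1)/3$ one has $q^2-q+1=3(3a^2-3a+1)$, so $\gcd(a,q^2-q+1)=\gcd(a,3)\leq 3<a$ for $q\geq 32$; for $xv)$, $q=2$ is impossible under $n\geq5$). If $M$ satisfies $ii)$ or $iii)$, the contradiction is given by Proposition \ref{punto-retta} or Proposition \ref{triangolo}. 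Otherwise, Lemma \ref{lemmino} forces $M$ to fall under case $xiv)$ only, and then Proposition \ref{quattordici} yields the contradiction.

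Next, suppose $G\not\subseteq\PSU(3,q)$. Then $G'=G\cap\PSU(3,q)$ has index $3$ in $G$ by Lemma \ref{indice}, so $|G'|=(q+1)/9$ (in particular $9\mid q+1$); let $M$ be a maximal subgroup of $\PSU(3,q)$ containing $G'$. If $M$ satisfies $ii)$ or $iii)$, the contradiction follows from Proposition \ref{puntorettafuori} or Proposition \ref{triangolofuori}. If not, I would rerun the argument of Lemma \ref{lemmino} with $G'$ (of order $(q+1)/9$) in place of $G$: the same estimates exclude cases $i)$, $iv)$, $xiii)$, $xv)$, leaving only case $xiv)$, which is then treated exactly as in Proposition \ref{quattordici}, working inside $\PSU(3,2^{n/3})$ with $G'$ and lifting the constraint to $G$. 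As above, one ends up comparing $|(\cH_q/G)(\fqs)|\equiv1,2\pmod 3$ with $|\cC_{2^n}(\fqs)|\equiv 0\pmod 3$, a contradiction. Since every case is impossible, no such $G$ exists, which proves the statement (equivalently, Theorem \ref{result1}).

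The step I expect to be most delicate is the residual subcase in the third paragraph, $G\not\subseteq\PSU(3,q)$ with the containing maximal subgroup in neither case $ii)$ nor $iii)$: one must verify that replacing $(q+1)/3$ by $(q+1)/9$ throughout does not break any of the strict inequalities used in Lemma \ref{lemmino} and Proposition \ref{quattordici}, and that the device of enlarging the relevant diagonal subgroup to a semiregular $\bar G$ of index $3$ (via Corollary \ref{cor1}) together with the two-way point count of Proposition \ref{triangolo} still applies verbatim after the substitution. Everything else is bookkeeping across the already-established propositions and lemmas.
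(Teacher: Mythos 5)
Your skeleton coincides with the paper's own proof: split on whether $G\leq\PSU(3,q)$, use Propositions \ref{punto-retta} and \ref{triangolo} (resp. \ref{puntorettafuori} and \ref{triangolofuori}) for cases \textit{ii)} and \textit{iii)}, and Lemma \ref{lemmino} plus Proposition \ref{quattordici} for the rest. The first half of your argument ($G\leq\PSU(3,q)$) is exactly the paper's. The gap is in the residual case of your third paragraph: $G\not\subseteq\PSU(3,q)$ with the maximal subgroup $M\supseteq G'=G\cap\PSU(3,q)$ falling only under case \textit{xiv)}. You claim this is ``treated exactly as in Proposition \ref{quattordici}'' after replacing $(q+1)/3$ by $(q+1)/9$, but Proposition \ref{quattordici} is stated and proved for $G\leq\PSU(3,q)$, and only its first (divisibility) half survives the substitution; rerunning it with $|G'|=(q+1)/9$ pins the situation down to $n=9$, $m=3$, $|G'|=57$, and at that point its endgame does not apply verbatim, because that endgame rests on identifying $G''=G\cap\PSU(3,2^m)$ with the full Singer normalizer of order $57$.

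Concretely, the paper must split further. If $G'\leq\PSU(3,2^3)$ one can indeed repeat Proposition \ref{quattordici} with $G'$ in place of $G$. But if $G'\not\subseteq\PSU(3,2^3)$, then $G''=G'\cap\PSU(3,2^3)$ has order $19$ only, and a genuinely new step is needed: by the third Sylow theorem $G''$ is the unique Sylow $19$-subgroup of $G'$, hence it is the cyclic Singer group itself; its fixed triangle has coordinates in $\F_{2^{18}}=\F_{q^2}$, so up to conjugation in $\PGU(3,2^9)$ it is the fundamental triangle, its vertices lie off $\cH_q$ by semiregularity (so the triangle is self-polar), and $G'$ — hence the configuration required in Proposition \ref{triangolofuori} — acts on it. Without this Singer-group identification, the concluding comparison $|(\cH_q/G)(\F_{q^2})|\equiv1,2\pmod 3$ versus $|\cC_{2^n}(\F_{q^2})|\equiv0\pmod 3$ has no triangle to be run on, so the delicate step you flagged is not a matter of re-checking inequalities ``verbatim'': it needs this additional idea, which your proposal does not supply.
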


\begin{proof}
Suppose $\cC_{2^n}\cong\cH_q/G$. By Propositions \ref{punto-retta}, \ref{triangolo}, \ref{quattordici} and Lemma \ref{lemmino}, we have that $G\not\subseteq\PSU(3,q)$; then $G'=G\cap\PSU(3,q)$ has index $3$ in $G$.
After replacing $G$ with $G'$, we can repeat the proofs of Propositions \ref{puntorettafuori} and \ref{triangolofuori}, the proof of Lemma \ref{lemmino}, and the first part of the proof of Proposition \ref{quattordici}. In this way, the only possibility we have is that $n=9$ and a maximal subgroup $M$ of $\PSU(3,2^9)$ containing $G'$ contains $\PSU(3,2^3)$ as a normal subgroup of index $3$; moreover, $G''=G'\cap\PSU(3,2^3)$ is contained in the normalizer $N'$ of a cyclic Singer group, of order $|N'|=57$.

If $G'\leq\PSU(3,2^3)$, then we repeat the argument of the proof of Proposition \ref{quattordici}, after $G$ with $G'$. In this way we get a contradiction.

If $G'\not\subseteq\PSU(3,2^3)$, then $G''=G'\cap\PSU(3,2^3)$ has order $|G'|/3=19$. Since $|G'|=57$, then by the third Sylow theorem $G'$ is the only Sylow $19$-subgroup of $G$, hence $G''$ is the cyclic Singer group normalized by $G'=N'$. Therefore $G''$ fixes a triangle with coordinates in the cubic extension $\mathbb{F}_{2^{18}}$ of $\mathbb{F}_{2^{6}}$, which is the fundamental triangle $T$ up to conjugation in $\PGU(3,2^9)$. Hence $G'$ acts on $T$, and Proposition \ref{triangolofuori} yields the thesis.
\end{proof}

\section{$\cX_{q}$ is not Galois-covered by $\cH_{q^3}$, for any $q>3$}

The aim of this section is to prove Theorem \ref{result2}.
Throughout the section, let $q>3$ be a power of a prime $p$.
%
%

By direct application of a result by Duursma and Mak, we have the following bound.

\begin{proposition}{\rm (\cite[Thm. 1.3]{DM})}\label{possibilivalori}
If there exists a Galois-covering $\cH_{q^3}\rightarrow\cX_q$, of degree $d$, then $$q^2+q\leq d\leq q^2+q+2.$$
\end{proposition}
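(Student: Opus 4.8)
The plan is to derive the statement from the Duursma--Mak bound \cite[Thm.\ 1.3]{DM}: the only genuine computation on our side is the value of $g(\cX_q)$, after which the statement reduces to a polynomial division. To set up \cite[Thm.\ 1.3]{DM}, I would first note that $\cH_{q^3}$ is $\F_{q^6}$-maximal (being $\F_{(q^3)^2}$-maximal), with $g(\cH_{q^3})=\tfrac12 q^3(q^3-1)$, while $\cX_q$ is $\F_{q^6}$-maximal by \eqref{abq}; thus a Galois covering $\cH_{q^3}\to\cX_q$ of degree $d$ is exactly a Galois covering of a maximal curve by the Hermitian curve over $\F_{q^6}$.

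To compute $g(\cX_q)$, I would regard the smooth model of $\cX_q$ as the Kummer cover $y^{m}=f(x)$ of the projective line, with $m=q^2-q+1$ and $f(x)=x^{q^2}-x=\prod_{a\in\F_{q^2}}(x-a)$ separable of degree $q^2$; here $\gcd(m,p)=1$ since $q$ is a $p$-power, and $\gcd(m,q^2)=1$. Hence the degree-$m$ map $\cX_q\to\mathbb{P}^1$ is tame, totally ramified over each of the $q^2$ simple zeros of $f$ and over the point at infinity, and unramified elsewhere, so the Riemann--Hurwitz formula gives
$$2g(\cX_q)-2=-2m+(q^2+1)(m-1),$$
that is, $g(\cX_q)=\tfrac12 q(q-1)^2(q+1)$ and $2g(\cX_q)-2=q^4-q^3-q^2+q-2$.

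Next I would plug these genera into \cite[Thm.\ 1.3]{DM}. The upper estimate is just Riemann--Hurwitz for $\cH_{q^3}\to\cX_q$ with a non-negative different, namely
$$d\le\frac{2g(\cH_{q^3})-2}{2g(\cX_q)-2}=\frac{q^6-q^3-2}{q^4-q^3-q^2+q-2}=q^2+q+2+\frac{q^3+3q^2+2}{q^4-q^3-q^2+q-2},$$
and since $q^4-2q^3-4q^2+q-4>0$ for every $q>3$ the trailing fraction lies in $(0,1)$, so $d\le q^2+q+2$. The lower estimate $d\ge q^2+q$ is the real content of \cite[Thm.\ 1.3]{DM}: it comes from bounding from above the degree of the different of an arbitrary Galois quotient of $\cH_{q^3}$ --- using the ramification filtration of the point stabilizers in $\PGU(3,q^3)$ together with the maximality of the quotient --- and, once $g(\cX_q)$ is substituted and the resulting inequality simplified, it yields $d\ge q^2+q$. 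Combining the two estimates gives $q^2+q\le d\le q^2+q+2$.

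The one point that would need care on our side is the arithmetic of the last step: I would have to verify that the Euclidean division above leaves a proper fractional remainder for \emph{every} $q>3$ and not merely for large $q$ --- this is precisely where the hypothesis $q>3$ enters, since for $q=3$ the same bound would also admit $d=q^2+q+3$, in accordance with $\cX_3$ requiring the separate treatment of \cite{GS2}. The substantive ingredient, the ramification bound for quotients of the Hermitian curve, is furnished by \cite{DM} and would be taken as given; so the main obstacle is genuinely offloaded, and what remains is the genus computation and the final elementary estimate.
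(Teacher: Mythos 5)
The paper offers no proof of this proposition at all --- it is stated as a direct citation of \cite[Thm.\ 1.3]{DM} --- and your argument is essentially the same, since the nontrivial lower bound $d\geq q^2+q$ is taken from Duursma--Mak as a black box. Your added verifications are correct: $g(\cX_q)=\tfrac12 q(q-1)^2(q+1)$, so $2g(\cX_q)-2=q^4-q^3-q^2+q-2$, and the Riemann--Hurwitz estimate $d\leq\frac{q^6-q^3-2}{q^4-q^3-q^2+q-2}=q^2+q+2+\frac{q^3+3q^2+2}{q^4-q^3-q^2+q-2}$ indeed gives $d\leq q^2+q+2$ for $q>3$.
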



Therefore we have to exclude three possible values of $d$.

\begin{proposition}\label{primovalore}
There is no Galois-covering $\cH_{q^3}\rightarrow\cX_q$ of degree $q^2+q+2$.
\end{proposition}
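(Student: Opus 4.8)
The plan is to suppose that $\cH_{q^3}/G\cong\cX_q$ for some $G\leq\PGU(3,q^3)$ with $|G|=q^2+q+2$, and to derive a contradiction from the Riemann--Hurwitz formula applied to $\cH_{q^3}\to\cH_{q^3}/G$. First I would record the genera: the Kummer cover $\cX_q\to\mathbb{P}^1$, $(X,Y)\mapsto X$, is tamely and totally ramified over the $q^2$ zeros of $X^{q^2}-X$ and over $X=\infty$, so $g(\cX_q)=q(q-1)^2(q+1)/2$; together with $g(\cH_{q^3})=q^3(q^3-1)/2$, Riemann--Hurwitz gives $\deg(\mathrm{Diff})=q^3+3q^2+2$ for the covering $\cH_{q^3}\to\cX_q$.

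The geometric input I would use is that a non-trivial semisimple element of $\PGU(3,q^3)$ fixes either no point of $\cH_{q^3}$ (three distinct eigenvalues: the fixed triangle is self-polar, hence external to $\cH_{q^3}$) or exactly $q^3+1$ of them (a homology, whose axis is non-degenerate, hence a $(q^3+1)$-secant line). When $p$ is odd every non-trivial element of $G$ is semisimple, so the covering $\cH_{q^3}\to\cX_q$ is tame and $\deg(\mathrm{Diff})=\sum_{g\neq 1}|\mathrm{Fix}_{\cH_{q^3}}(g)|$ is a multiple of $q^3+1$; but $q^3+3q^2+2\equiv 3q^2+1\pmod{q^3+1}$ with $0<3q^2+1<q^3+1$ for $q>3$, a contradiction.

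For $p=2$ I would argue as follows. Involutions of $\PGU(3,q^3)$ are unitary transvections; such an $\iota$ fixes exactly its centre on $\cH_{q^3}$, and a direct computation on the norm-trace model $Y^{q^3+1}=X^{q^3}+X$ shows that its ramification jump there equals $q^3+1$, so every wild point of $\cH_{q^3}\to\cX_q$ contributes at least $q^3+2$ to $\deg(\mathrm{Diff})$. Since $q^3+3q^2+2<2(q^3+2)$, there is a single wild point $P_0$, hence a unique --- therefore central --- involution $\iota$, whence $G$ fixes $P_0$. Then $G$ lies in the stabiliser of the point $P_0\in\cH_{q^3}$, whose structure (unipotent radical of order $q^9$ extended by a cyclic group) forces $G$ to be cyclic of order $q^2+q+2$, because $2$ exactly divides $|G|$. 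Writing $N$ for the odd part of $G$, every non-trivial element of $N$ fixes $P_0\in\cH_{q^3}$, hence is a homology; these homologies share a common $(q^3+1)$-secant axis $\ell_0\ni P_0$ fixed pointwise by $N$, and one checks that the ramified points of $\cH_{q^3}\to\cX_q$ are precisely the $q^3+1$ points of $\ell_0\cap\cH_{q^3}$: the point $P_0$ with stabiliser $G$ (wild, local different $2|N|+q^3$) and the remaining $q^3$ points with stabiliser $N$ (tame, local different $|N|-1$). Summing gives $\deg(\mathrm{Diff})=|N|(q^3+2)$, so $|N|=(q^3+3q^2+2)/(q^3+2)$, which is not an integer for $q>3$ --- the final contradiction.

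I expect the characteristic-two case to be the main obstacle: one must handle wild ramification, which requires identifying the ramification jump of a unitary transvection and, before the Riemann--Hurwitz count can be made exact, establishing that $G$ is cyclic and fixes a point of $\cH_{q^3}$.
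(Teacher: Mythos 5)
Your Riemann--Hurwitz bookkeeping is correct (the genus of $\cX_q$, the value $\deg(\mathrm{Diff})=q^3+3q^2+2$, and, for $p=2$, the deduction that $G$ contains at most one involution), but the geometric dichotomy on which both of your cases rest is false. A non-trivial element of $\PGU(3,q^3)$ of order prime to $p$ need not fix either $0$ or $q^3+1$ points of $\cH_{q^3}$: its fixed triangle need not be self-polar. Elements of the cyclic group of order $q^6-1$ stabilizing a point of the curve (e.g.\ $\alpha_a:(X,Y,T)\mapsto(a^{q^3+1}X,aY,T)$ in the norm-trace model, used in the paper's proof of Proposition \ref{secondovalore}, Case i)) fix exactly $2$ points of $\cH_{q^3}$, and Singer-type elements fix exactly $3$ points, all lying on the curve over the cubic extension --- precisely the computation carried out for the elements $\bar n h$ in the proof of Proposition \ref{triangolo}. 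So the correct list of fixed-point counts for elements of order prime to $p$ is $0,2,3$ or $q^3+1$. In odd characteristic your conclusion that $\deg(\mathrm{Diff})$ is a multiple of $q^3+1$ therefore does not follow; the counting only shows that $G$ contains exactly one homology (necessarily an involution), and the residual amount $3q^2+1$ could a priori be supplied by elements with $2$ or $3$ fixed points, since $3(q^2+q)\ge 3q^2+1$. The same error breaks the even case: from ``every non-trivial element of $N$ fixes $P_0$'' you cannot infer that it is a homology --- the generic element of the stabilizer of $P_0$ fixes exactly two points of the curve --- so the common axis $\ell_0$, the identification of the ramified points with $\ell_0\cap\cH_{q^3}$, and the final identity $\deg(\mathrm{Diff})=|N|(q^3+2)$ are all unjustified. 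Closing these gaps would require a structural case analysis of the possible $G$ comparable to the paper's treatment of the degree $q^2+q+1$.

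For comparison, the paper disposes of this degree without any ramification analysis: if the covering existed, $\aut(\cH_{q^3})\cong\PGU(3,q^3)$ would contain a subgroup of order $q^2+q+2$, so by Lagrange $q^2+q+2$ would divide $q^9(q^9+1)(q^6-1)$; polynomial division leaves the remainder $2128q-1568$, which $q^2+q+2$ can divide only for $q\in\{1,2,3,10\}$, all excluded. In particular no group $G$ of this order exists at all, which is the fact your proposal implicitly assumes and never checks.
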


\begin{proof}
If such a Galois-covering exists, then $q^2+q+2$ divides the order $q^9(q^9+1)(q^6-1)$ of $\PGU(3,q^3)$, hence $q^2+q+2$ divides the remainder of the polynomial division, which is equal to $2128q-1568$. Then the possible values for $q$ are $1$, $2$, $3$, or $10$, but none of these is acceptable.
\qed\end{proof}
\vspace*{.2cm}
Now we consider the possible value $d=q^2+q+1$.

\begin{lemma}\label{lem}
Let $G\leq\PGU(3,q^3)$ with $|G|=q^2+q+1$. Then $G\leq\PSU(3,q^3)$.
\end{lemma}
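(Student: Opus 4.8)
The statement is a short divisibility observation about the index of $\PSU$ in $\PGU$. Recall that $\PSU(3,q^3)$ is a normal subgroup of $\PGU(3,q^3)$ with $[\PGU(3,q^3):\PSU(3,q^3)]=\gcd(q^3+1,3)$; concretely, writing $\mathrm{GU}(3,q^3)$ as the matrices over $\F_{q^6}$ preserving the Hermitian form and $\mathrm{SU}$ as the kernel of the determinant, the quotient $\PGU(3,q^3)/\PSU(3,q^3)$ is cyclic of order $d':=\gcd(q^3+1,3)\in\{1,3\}$. Let $\pi\colon\PGU(3,q^3)\to\PGU(3,q^3)/\PSU(3,q^3)$ be the quotient map, so $\ker\pi=\PSU(3,q^3)$ and $|\mathrm{im}\,\pi|=d'$.

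\textbf{Key steps.} First, if $d'=1$ there is nothing to prove, so assume $d'=3$. Then $\pi(G)$ is a homomorphic image of $G$, hence $|\pi(G)|$ divides $|G|=q^2+q+1$; on the other hand $\pi(G)\leq\mathrm{im}\,\pi$ forces $|\pi(G)|\mid 3$. Therefore $|\pi(G)|$ divides $\gcd(q^2+q+1,3)$, and it suffices to show this gcd equals $1$ in the case $d'=3$. Now $d'=3$ means $3\mid q^3+1$, which (since $q^3\equiv q\pmod 3$) is equivalent to $q\equiv 2\pmod 3$; in that case $q^2+q+1\equiv 4+2+1=7\equiv 1\pmod 3$, so $3\nmid q^2+q+1$. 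Hence $|\pi(G)|=1$, i.e.\ $G\leq\ker\pi=\PSU(3,q^3)$.

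\textbf{Remark on difficulty.} There is essentially no obstacle here: the only point requiring verification is the elementary congruence $q\equiv 2\pmod 3\implies q^2+q+1\equiv 1\pmod 3$, which disposes of the sole nontrivial case $d'=3$. One could phrase the same argument determinant-theoretically (every class in $\PGU(3,q^3)$ admits a representative whose determinant is a norm-one element of $\F_{q^6}^\ast$, and membership in $\PSU$ is detected by that determinant being a cube, an obstruction of order dividing $3$), but the quotient-group formulation above is the most economical and will be the one I carry out.
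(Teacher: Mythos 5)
Your proposal is correct and follows essentially the same route as the paper: both reduce to the observation that when $[\PGU(3,q^3):\PSU(3,q^3)]=3$ one has $3\mid q+1$, hence $q\equiv 2\pmod 3$ and $3\nmid q^2+q+1=|G|$, which forces $G\leq\PSU(3,q^3)$. The only cosmetic difference is that you phrase the conclusion via the quotient map $\pi$, whereas the paper derives a contradiction from $\PGU(3,q^3)=G\cdot\PSU(3,q^3)$ giving $G$ a subgroup of index $3$; these are the same argument.
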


\begin{proof}
If $\PGU(3,q^3)=\PSU(3,q^3)$ there is nothing to prove, hence we can assume that $\PSU(3,q^3)$ has index $3$ in $\PGU(3,q^3)$. Then $\gcd(3,q^3+1)=3$, or equivalently $\gcd(3,q+1)=3$, so that $3$ does not divide $q^2+q+1=|G|$. If $G\not\subseteq\PSU(3,q^3)$, then $\PGU(3,q^3)=G\cdot\PSU(3,q^3)$ and $G$ has a subgroup $G\cap\PSU(3,q^3)$ of index $[\PGU(3,q^3):\PSU(3,q^3)]=3$, contradiction.
\qed\end{proof}

\begin{proposition}\label{secondovalore}
There is no Galois-covering $\cH_{q^3}\rightarrow\cX_q$ of degree $q^2+q+1$.
\end{proposition}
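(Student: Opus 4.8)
The plan is to suppose a Galois covering $\cH_{q^3}\rightarrow\cX_q$ of degree $q^2+q+1$ exists, realized as $\cX_q\cong\cH_{q^3}/G$ for some $G\leq\aut(\cH_{q^3})=\PGU(3,q^3)$ with $|G|=q^2+q+1$. By Lemma \ref{lem} we may work inside $\PSU(3,q^3)$, and then I would run the same kind of case-analysis against the Mitchell--Hartley list (Theorem \ref{MH}) applied to $\PSU(3,q^3)$, i.e. with ``$q$'' there replaced by $q^3$. The first observation is that $|G|=q^2+q+1$ divides $|\PGU(3,q^3)|=q^9(q^9+1)(q^6-1)$, and since $q^3+1=(q+1)(q^2-q+1)$ while $q^6-1=(q^3-1)(q^3+1)$, one checks $\gcd(q^2+q+1,q^3(q^3-1))=1$ and $q^2+q+1\mid q^9+1$ precisely through the factor $q^6+q^3+1$; so $G$ is forced into the ``Singer-type'' part of the list. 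Concretely I expect to show, exactly as in Proposition \ref{punto-retta}, that $G$ acts semiregularly or at least with controlled fixed points: an element of prime order $d\mid q^2+q+1$ lies in a Sylow subgroup whose fixed locus on $\PG(2,q^6)$ consists of a triangle, and then most maximal-subgroup cases are killed numerically because $q^2+q+1$ does not divide the relevant group orders.

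The surviving cases should be case \textit{iv)} (normalizer of a Singer cycle, of order $3(q^6-q^3+1)/d$) and possibly a subfield case \textit{vi)} / \textit{xiii)}. For case \textit{iv)}: $q^2+q+1\mid q^6-q^3+1$? Since $q^6-q^3+1=(q^2-q+1)(q^4+q^3-q-1)+\cdots$ — here I would do the explicit division and find that $q^2+q+1\nmid q^6-q^3+1$ in general (indeed $q^6-q^3+1\equiv \text{(small polynomial in }q)\pmod{q^2+q+1}$), ruling it out, or if it does divide for sporadic $q$ one treats those by hand. For the subfield cases $\PSU(3,p^m)\leq\PSU(3,q^3)$ with $q=p^k$, $q^3=p^{3k}$: one needs $q^2+q+1$ to divide $|\PSU(3,p^m)|$ with $m\mid 3k$ and $(3k)/m$ odd, and since $m<3k$ forces $p^m\leq q$, the order $p^{3m}(p^{3m}+1)(p^{2m}-1)/d$ is too small once $q$ is not tiny — a clean size estimate $q^2+q+1 > |\PSU(3,q)|$ is false, so instead one argues via the divisibility $q^2+q+1\mid (p^{3m}+1)$, which is a strong constraint solvable only for small $q$, then checks those directly. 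Throughout, the Riemann--Hurwitz formula applied to $\cH_{q^3}\rightarrow\cH_{q^3}/G$ together with the known genus $g(\cX_q)=(q^2-q)(q^2-q-1)/2$ can be invoked as a sanity cross-check, but I expect the divisibility obstructions to do the real work.

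The main obstacle, as in the $\cC_{2^n}$ section, will be the borderline cases where the crude order/divisibility bounds do \emph{not} immediately exclude a candidate $G$ — in particular small values of $q$ surviving the polynomial-remainder conditions, and the Singer-normalizer case where $G$ itself could be a cyclic Singer-type group of order $q^2+q+1$ fixing a triangle $T$ in $\PG(2,q^6)\setminus\PG(2,q^2)$. For that last possibility the strategy mirrors Propositions \ref{triangolo}--\ref{triangolofuori}: embed $G$ as a proper normal subgroup of a larger semiregular group $\bar G$ (taking cube roots of the diagonal entries, or enlarging the Singer normalizer), and derive a contradiction either from Corollary-type semiregularity transfer or by counting $\fqs$-rational points of $\cH_{q^3}/G$ modulo a suitable prime and comparing with $|\cX_q(\F_{q^6})| = q^6 + 1 + 2q\cdot q \cdot g(\cX_q)$, which has a fixed residue incompatible with the quotient. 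I would expect to need Theorem \ref{Di} once more to pin down the structure of $G$ acting on a line or on $\PG(1,q^6)$, and a short argument that $\aut(\cX_q)$ cannot absorb the extra automorphism coming from $\bar G/G$ — this compatibility failure is the crux, and verifying it rigorously (rather than just numerically) is where the care is needed.
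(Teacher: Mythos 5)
There is a genuine gap, and it comes from an arithmetic error that inverts your whole case analysis. You claim $\gcd(q^2+q+1,\,q^3(q^3-1))=1$ and $q^2+q+1\mid q^9+1$, concluding that $G$ is forced into the Singer-type case of Theorem \ref{MH}. Both claims are backwards: $q^3-1=(q-1)(q^2+q+1)$, so $q^2+q+1$ divides $q^3-1$ (hence $q^6-1$), while $q^9+1\equiv 2\pmod{q^2+q+1}$ and $q^6-q^3+1\equiv 1\pmod{q^2+q+1}$, so $q^2+q+1$ is coprime to $q^9+1$ and to the Singer-normalizer order $3(q^6-q^3+1)/d$. Consequently the case you devote most effort to (case \textit{iv)}, and the ``cyclic Singer-type $G$ fixing a triangle in $\PG(2,q^6)\setminus\PG(2,q^2)$'') is excluded immediately by Lagrange, whereas the cases you dismiss as ``killed numerically'' are exactly the ones that survive: case \textit{i)} (stabilizer of a rational point of $\cH_{q^3}$, of order $q^9(q^6-1)/d$) and case \textit{ii)} (stabilizer of a point off $\cH_{q^3}$ and its polar line, of order $q^3(q^3-1)(q^3+1)^2/d$), both of which have order divisible by $q^2+q+1$. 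Indeed a cyclic $G$ of order $q^2+q+1$ really does sit inside the cyclic group $H$ of order $q^6-1$ fixing two rational points of $\cH_{q^3}$, so no divisibility obstruction can dispose of it.

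The paper's proof spends its real work precisely there: in case \textit{i)} one takes $G=\langle\alpha_b\rangle\subset H$ and embeds it in the larger cyclic group $\bar G=\langle\alpha_{a^{q-1}}\rangle$ with $[\bar G:G]=q^3+1$, deduces that $\bar G/G$ acts on $\cH_{q^3}/G$ with two fixed points and all other orbits of length $q^3+1$, so $|(\cH_{q^3}/G)(\F_{q^6})|\equiv 2\pmod{q^3+1}$, while $|\cX_q(\F_{q^6})|=q^7-q^5+q^4+1\equiv q^2+1\pmod{q^3+1}$ — a contradiction; case \textit{ii)} is reduced to case \textit{i)} via a faithful action on the polar line, Theorem \ref{Di}, and the observation that the two fixed points on the line lie on $\cH_{q^3}$, and case \textit{v)} needs a separate short argument. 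Your closing paragraph does gesture at the right kind of mechanism (enlarge $G$ to a normal overgroup $\bar G$ and compare point counts modulo the index), but you deploy it only against the Singer configuration, which cannot occur, and your cross-check data are also off ($g(\cX_q)=(q-1)(q^3-q)/2$, not $(q^2-q)(q^2-q-1)/2$, and the maximality count over $\F_{q^6}$ uses $2q^3g$, not $2q^2g$). As written, the proposal would not close the two cases on which the statement actually hinges.
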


\begin{proof}
Suppose such a Galois-covering exists, say $\cX_q\cong\cH_{q^3}/G$. Then $G\leq\PSU(3,q^3)$ by Lemma \ref{lem}, and we can apply Theorem \ref{MH}.

{\bf Case \textit{i)}} Let $\cH_{q^3}$ have equation $Y^{q^3+1}=X^{q^3}+X$; up to conjugation, $G$ fixes the ideal point $P_\infty$. The stabilizer $S$ of $P_\infty$ in $\PGU(3,q^3)$ is the semidirect product $P\rtimes H$, where $P$ is the unique Sylow $p$-subgroup of $S$ of order $q^9$, and $H$ is a cyclic group of order $q^6-1$ generated by $$\alpha_a:(X,Y,T)\mapsto(a^{q^3+1}X,aY,T),$$
where $a$ is a primitive $(q^6-1)$-th root of unity; $H$ fixes two $\mathbb{F}_{q^3}$-rational points of $\cH_{q^3}$ and acts semiregularly on the other points (see \cite[Section 4]{GSX}). Since $P\triangleleft S$, $|P|$ and $|H|$ are coprime, and $|G|$ divides $|H|$, then $G\subset H$, and $G=\langle\alpha_b\rangle$, where $b=a^{(q^3+1)(q-1)}$. Now consider the group $\bar G=\langle\alpha_c\rangle\subset H$, where $c=a^{q-1}$; $G$ is normal in $\bar G$ with index $q^3+1$. The automorphism group $\bar G/G$ has two fixed points on $\cH_{q^3+1}/G$ and all other orbits are long; then the number of $\mathbb{F}_{q^6}$-rational points of $\cH_{q^3}$v is congruent to $2$ modulo $q^3+1$. On the other side, the $\mathbb{F}_{q^6}$-maximal curve $\cX_q$ has genus $(q-1)(q^3-q)/2$, hence the number of $\mathbb{F}_{q^6}$-rational points of $\cX_q$ is $q^7-q^5+q^4+1\equiv q^2+1\,(\mod\,q^3+1)$, contradiction.

{\bf Case \textit{ii)}} Let $\cH_{q^3}$ have the Fermat equation $X^{q^3+1}+X^{q^3+1}+1=0$; up to conjugation, $G$ fixes the affine point $(0,0)$ and the line at infinity $\ell:T=0$. The action of $G$ on $\ell$ is faithful. In fact, if $g\in G$ fixes $\ell$ pointwise, then $g:(X,Y,T)\mapsto(X,Y,\lambda T)$ is a homology whose order divides $q^3+1$; on the other hand, the order of $g$ divides $|G|=q^2+q+1$, hence $g$ is the identity since $q$ is even. Therefore, as in the proof of Proposition \ref{punto-retta}, $G$ is isomorphic to a subgroup of $\PGL(2,q^6)$. Since $|G|$ is odd and coprime with $p$, then by Theorem \ref{Di} $G$ is cyclic. Moreover, since $|G|$ divides $q^6-1$, then $G$ has two fixed points $P_1,P_2\in\ell$ and acts semiregularly on $\ell\setminus\left\{P_1,P_2\right\}$ (see \cite[Hauptsatz 8.27]{Hu}). Since $|\ell\cap\cH_{q^3+1}|\equiv 2\,(\mod\,q^2+q+1)$, this implies $P_1,P_2\in\cH_{q^3+1}$. Therefore we can repeat the argument of Case $i)$ to get a contradiction.

{\bf Cases \textit{iii)} and \textit{iv)}} The order of $G$ does not divide the order of these maximal subgroups.

{\bf Cases \textit{v)}} $G$ acts on the $q^6+1$ $\mathbb{F}_{q^6}$-rational points of a conic $\cC$; as in Case $ii)$, $G=\langle g\rangle$ is isomorphic to a cyclic subgroup $\Gamma\leq\PGL(2,q^6)$ acting on a line $\ell$ with no short orbits but two fixed points. The action of $G$ on $\cC$ is equivalent to the action of $\Gamma$ on $\ell$, see \cite[Chapt. VIII, Thm. 15]{VY}; hence $G$ has no short orbits on $\cC$ but two fixed points $P_1,P_2$.
If $G$ has a fixed $\mathbb{F}_{q^6}$-point on $\cH_{q^3}$, then argue as in Case $i)$. Otherwise, $P_1,P_2\notin\cH_{q^3}$, and by \cite[Par. 2]{M}, \cite[Page 141]{H}, we have that $G$ fixes a third point $P_3$ such that $T=\left\{P_1,P_2,P_3\right\}$ is a self-polar triangle. Let $\cH_{q^3}$ be given in the Fermat form, then up to conjugation $T$ is the fundamental triangle and $g$ has the form $g:(X,Y,T)\mapsto(\lambda X,\mu Y,T)$. Then the order of $g$ divides $q^3+1$, contradicting $|G|=q^2+q+1$.

{\bf Cases \textit{viii)} to \textit{xii)} and Case \textit{xv)}} $|G|$ does not divide the order of these maximal subgroups.

{\bf Cases \textit{vi), vii), xiii)}, and \textit{xiv)}} Note that, if $K$ is a group containing $\PSU(3,p^h)$ as a normal subgroup of index $3$, then the orders of maximal subgroups of $K$ are three times the orders of maximal subgroups of $\PSU(3,p^h)$. With this observation, by applying Theorem \ref{MH} to $\PSU(3,p^m)$, it is easily seen that $|G|$ does not divide the orders of maximal subgroups of $\PSU(3,p^m)$ nor three times these orders.
\qed\end{proof}

\begin{lemma}\label{quantisylow}
Let $G\leq\PGU(3,q^3)$ with $|G|=q(q+1)$. Then the number of Sylow $p$-subgroups is either $1$ or $q+1$.
\end{lemma}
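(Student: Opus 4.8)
The plan is to combine Sylow's theorems with the known action of $\PGU(3,q^3)$ on the Hermitian curve $\cH_{q^3}$. Write $q=p^f$; then a Sylow $p$-subgroup $P$ of $G$ has order $q$, and if $n_p$ denotes the number of Sylow $p$-subgroups, Sylow's congruence gives $n_p\equiv 1\pmod p$. Since $p\nmid n_p$ while $n_p\mid|G|=q(q+1)$, we already obtain $n_p\mid q+1$, so it suffices to exclude every divisor $m$ of $q+1$ with $1<m<q+1$ as a possible value of $n_p$.

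First I would pin down the normalizer $N_G(P)$ geometrically. A nontrivial $p$-subgroup of $\PGU(3,q^3)$ fixes exactly one point of $\cH_{q^3}$: it fixes at least one, since it acts on the $q^9+1$ rational points of $\cH_{q^3}$ over $\F_{q^6}$ and $q^9+1\equiv 1\pmod p$; and it cannot fix two, because the stabilizer of two points of $\cH_{q^3}$ in $\PGU(3,q^3)$ has order $q^6-1$, coprime to $p$ (this is immediate from the $2$-transitivity of $\PGU(3,q^3)$ on these $q^9+1$ points, or from the structure of a point-stabilizer $\tilde U\rtimes H$ recalled in the proof of Proposition \ref{secondovalore}, with $\tilde U$ the normal Sylow $p$-subgroup of order $q^9$ acting regularly on the other points and $H$ cyclic of order $q^6-1$ fixing two points of $\cH_{q^3}$ and semiregular elsewhere). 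Let $P_0$ be the point fixed by $P$ and write $S_0=\tilde U_0\rtimes H_0$ for $\mathrm{Stab}_{\PGU(3,q^3)}(P_0)$; then $P\le S_0$, and since $\tilde U_0$ is the unique Sylow $p$-subgroup of $S_0$, in fact $P\le\tilde U_0$. As $N_G(P)$ normalizes $P$ it maps the set of points of $\cH_{q^3}$ fixed by $P$ to itself, and that set is $\{P_0\}$, so $N_G(P)\le\mathrm{Stab}_G(P_0)$; conversely $\mathrm{Stab}_G(P_0)\cap\tilde U_0$ is a normal $p$-subgroup of $\mathrm{Stab}_G(P_0)$ containing $P$, hence equals $P$, so $P\triangleleft\mathrm{Stab}_G(P_0)$ and $N_G(P)=\mathrm{Stab}_G(P_0)$. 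By Schur--Zassenhaus $N_G(P)=P\rtimes C$ with $|C|=(q+1)/n_p$; since $C$ is a $p'$-subgroup of the $p$-solvable group $S_0$, after replacing $G$ by a conjugate we may assume $C\le H_0$. Let $P_1$ be the second point of $\cH_{q^3}$ fixed by $H_0$.

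Next I would argue by contradiction. Suppose $1<n_p<q+1$, so $1<|C|<q+1$. Since $H_0$ fixes only $P_0,P_1$ on $\cH_{q^3}$ and is semiregular elsewhere, every nontrivial element of $C\le H_0$ fixes exactly $\{P_0,P_1\}$ on $\cH_{q^3}$. The key point is that $C$ acts freely on $P\setminus\{1\}$ by conjugation: if $1\ne c\in C$ commuted with $1\ne x\in P$, then $x$ would preserve the fixed locus of $c$, hence the pair $\{P_0,P_1\}$; but $x\in P\le\mathrm{Stab}_G(P_0)$ fixes $P_0$, so $x$ would fix $P_1$ as well, making $x$ a nontrivial $p$-element with two fixed points on $\cH_{q^3}$, which is impossible. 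Therefore $|C|$ divides $|P\setminus\{1\}|=q-1$, so $|C|$ divides $\gcd(q-1,q+1)$, which equals $1$ when $q$ is even and $2$ when $q$ is odd. As $|C|>1$, this forces $q$ odd and $|C|=2$, i.e. $n_p=(q+1)/2$; but then $2n_p=q+1\equiv 1\pmod p$ gives $n_p\equiv 2^{-1}\not\equiv 1\pmod p$ (since $p$ is odd), contradicting Sylow's congruence. Hence no such $m$ exists and $n_p\in\{1,q+1\}$.

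I expect the main difficulty to lie not in the closing arithmetic but in the structural step: the identification $N_G(P)=\mathrm{Stab}_G(P_0)$, together with the facts that a nontrivial $p$-element of $\PGU(3,q^3)$ has a unique fixed point on $\cH_{q^3}$ and that the cyclic torus $H_0$ is semiregular off its two fixed points — all of which rest on the description of $\PGU(3,q^3)$-stabilizers already exploited in Section $4$. The two small reductions "$P\le\tilde U_0$" and "$C\le H_0$ up to conjugacy" should also be checked: they follow respectively from $\tilde U_0$ being the unique Sylow $p$-subgroup of $S_0$ and from the conjugacy of Hall $p'$-subgroups in the $p$-solvable group $S_0$.
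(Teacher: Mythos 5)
Your reductions up to $N_G(P)=\mathrm{Stab}_G(P_0)$, $N_G(P)=P\rtimes C$ with $|C|=(q+1)/n_p$, and $C\le H_0$ are sound, but the decisive step after that is wrong. Since $|C|$ divides $q+1$, which divides $q^3+1$, and $H_0$ is cyclic of order $q^6-1$, your $C$ lies in the \emph{unique} subgroup of $H_0$ of order $q^3+1$; in the norm-trace model this subgroup consists exactly of the maps $(X,Y,T)\mapsto(X,\lambda Y,T)$ with $\lambda^{q^3+1}=1$, i.e.\ of homologies with center $(0,1,0)\notin\cH_{q^3}$ and axis $Y=0$. A nontrivial element of this kind does not fix only $\{P_0,P_1\}$ on $\cH_{q^3}$: it fixes all $q^3+1$ points of $\cH_{q^3}$ lying on its axis. (The statement you quote about $H_0$ being semiregular off its two fixed points is only valid for elements whose order does not divide $q^3+1$; in Proposition \ref{secondovalore} the group has order $q^2+q+1$, coprime to $q^3+1$, so no harm is done there, but your $C$ sits precisely inside the exceptional subgroup.) Consequently your freeness argument collapses: if $x\in P$ commutes with $c\in C\setminus\{1\}$, you only get that $x$ preserves the axis of $c$, not that $x$ fixes $P_1$. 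In fact the centralizer of such a homology in $\tilde U_0$ is the group $\{(X,Y,T)\mapsto(X+bT,Y,T)\mid b^{q^3}+b=0\}$ of order $q^3\ge q$, and homologies of order dividing $q+1$ really do commute with nontrivial $p$-elements fixing only $P_0$ — this is exactly the configuration exploited in Proposition \ref{1sylow}. So the inference $|C|\mid q-1$, and with it the exclusion of $1<n_p<q+1$, is unproved; this is a genuine gap, not a fixable detail, because nothing in your setup prevents $P$ from meeting the centralizer of $C$ nontrivially.

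For comparison, the paper rules out intermediate values without ever analyzing the complement $C$: assuming $n>1$, the group $G$ has no fixed point on $\cH_{q^3}$ (a common fixed point would force every Sylow $p$-subgroup of $G$ into the unique Sylow $p$-subgroup of its stabilizer, giving $n=1$); since $Q_1$ is semiregular on $\cH_{q^3}\setminus\{P_1\}$, the $G$-orbit of $P_1$ has length $\equiv 1\pmod q$ and $>1$, hence $\ge q+1$, while the orbit–stabilizer theorem bounds it above by $q+1$ because $Q_1\le\mathrm{Stab}_G(P_1)$. Equality forces each point of the orbit to have stabilizer of order exactly $q$, i.e.\ a Sylow $p$-subgroup, so the orbit is $\{P_1,\dots,P_n\}$ and $n=q+1$. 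If you want to keep your normalizer-based framework, you would need an input of this global counting type rather than the local commutation claim.
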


\begin{proof}
Let $Q_1,\ldots,Q_{n}$ be the Sylow $p$-subgroups of $G$, and let $P_i\in\cH_{q^3}$ be the unique rational point fixed by $Q_i$, $i=1,\ldots,n$. Assume $n>1$; note that $n\leq q+1$, as $Q_i\cap Q_j$ is trivial for $i\neq j$. Since $G$ has no fixed points and $Q_i$ is semiregular on $\cH_{q^3}\setminus\left\{P_i\right\}$, then the length of the orbit $\cO_{P_1}$ of $P_1$ under $G$ is at least $q+1$; on the other side, the stabilizer of $P_1$ in $G$ has length at least $q$, since it contains $Q_1$. Therefore $|\cO_{P_1}|=q+1$ by the orbit-stabilizer theorem. If $P\in\cO_{P_1}$, then the stabilizer of $P$ in $G$ has order $q$, hence $P=P_i$ for some $i\in\left\{2,\ldots,n\right\}$. Therefore $\cO_{P_1}=\left\{P_1,\ldots,P_n\right\}$ and the thesis follows.
\qed\end{proof}

\begin{proposition}\label{1sylow}
Let $G\leq\PGU(3,q^3)$ with $|G|=q(q+1)$. If $G$ has a unique Sylow $p$-subgroup $Q$, then $\cX_q\not\cong\cH_{q^3}/G$.
\end{proposition}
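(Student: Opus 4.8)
The plan is to determine $G$ completely, write down an explicit model for the quotient $\cH_{q^3}/G$, and then contradict the hypothesis $\cH_{q^3}/G\cong\cX_q$ by comparing the two curves as cyclic covers of $\PG(1,q^6)$. First I would show that $G$ fixes a point of $\cH_{q^3}$. The group $Q$ is a non-trivial $p$-subgroup of $\PGU(3,q^3)$, hence it fixes a unique point $P_1$ of $\cH_{q^3}$, and $P_1$ is $\F_{q^6}$-rational (being the only fixed point of a subgroup defined over $\F_{q^6}$); since $Q\trianglelefteq G$, the point $P_1$ is fixed by all of $G$. Thus $G$ lies in the stabilizer $S=P\rtimes H$ of $P_1$, with $P$ the normal Sylow $p$-subgroup of order $q^9$ and $H$ cyclic of order $q^6-1$ (as in the proof of Proposition \ref{secondovalore}). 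Then $Q=G\cap P$, so $G/Q$ embeds in $S/P\cong H$ and is cyclic of order $q+1$; by Schur--Zassenhaus $G=Q\rtimes C$ with $C$ cyclic of order $q+1$, and, applying Schur--Zassenhaus inside $S$, after replacing $G$ by a $P$-conjugate we may take $C=\la\alpha_b\ra\leq H$ with $b\in\F_{q^6}$ of order $q+1$.

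Next I would pin down the action of $C$. Writing $\cH_{q^3}$ in the norm--trace model $Y^{q^3+1}=X^{q^3}T+XT^{q^3}$ with $P_1=P_\infty$, the centre $Z(P)$ consists of the elations $\tau_\gamma\colon(x,y)\mapsto(x+\gamma,y)$ with $\gamma\in W:=\{\gamma:\gamma^{q^3}+\gamma=0\}$; one computes $\alpha_b\tau_\gamma\alpha_b^{-1}=\tau_{b^{q^3+1}\gamma}$ and that $\alpha_b$ induces multiplication by $b$ on $P/Z(P)\cong(\F_{q^6},+)$. Since $(q+1)\mid(q^3+1)$ we have $b^{q^3+1}=1$, so $C$ centralizes $Z(P)$ and acts on $P/Z(P)$ with only fixed points and orbits of length $q+1$. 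As the image of $Q$ in $P/Z(P)$ is a $C$-invariant $p$-group of order at most $q<q+1$, it must be trivial; hence $Q\leq Z(P)$. Consequently $Q=\{\tau_\gamma:\gamma\in Q_0\}$ for an $\F_p$-subspace $Q_0\leq W$ of order $q$, $G=Q\times C$, the group $C$ consists of the homologies $\mathrm{diag}(1,b^j,1)$ fixing the $(q^3+1)$-secant $Y=0$ pointwise, and $Q$ consists of elations with centre $P_\infty$ and axis $T=0$.

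Then I would compute the quotient. With $u=\prod_{\gamma\in Q_0}(x+\gamma)$, an additive polynomial of degree $q$ in $x$, one has $\F_{q^6}(x)^{Q}=\F_{q^6}(u)$; since $Q_0\subseteq W$ the function $x^{q^3}+x$ is $Q$-invariant, and $x^{q^3}+x=\phi(u)$, where $\phi$ is the additive polynomial whose (simple) roots form the $\F_p$-subspace $V:=\bigl\{\prod_{\gamma\in Q_0}(\delta+\gamma):\delta\in W\bigr\}$ of $\F_{q^6}$, of order $q^2$ and contained in $W$. Passing further to the quotient by $C$ and setting $w=y^{q+1}$ gives the model $w^{q^2-q+1}=\phi(u)$ for $\cH_{q^3}/G$. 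Thus $\cH_{q^3}/G$ and $\cX_q$ are cyclic covers of $\PG(1,q^6)$ of degree $q^2-q+1$, with $q^2+1$ totally ramified points lying over $V\cup\{\infty\}$ and $\F_{q^2}\cup\{\infty\}$ respectively (all ramification is total, since $\gcd(q^2,q^2-q+1)=1$). Using the structure of $\aut(\cX_q)$ — in which the cyclic group of order $q^2-q+1$ with rational quotient $\PG(1,q^6)$ is normal — an isomorphism $\cH_{q^3}/G\cong\cX_q$ would yield $\mu\in\PGL(2,q^6)$ with $\mu(V\cup\{\infty\})=\F_{q^2}\cup\{\infty\}$. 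Examining the two cases (whether $\mu$ fixes $\infty$ or sends it into $V$) forces $V=c\,\F_{q^2}$ for some $c\in\F_{q^6}^{*}$; but then $c^2\F_{q^2}\subseteq W\cdot W\subseteq\F_{q^3}$, whence $\F_{q^2}\subseteq\F_{q^3}$, a contradiction. Therefore $\cH_{q^3}/G\not\cong\cX_q$.

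The main difficulty is the final reduction: producing the explicit model $w^{q^2-q+1}=\phi(u)$ (the linearized-polynomial bookkeeping, together with the fact $W\cdot W\subseteq\F_{q^3}$, which in odd characteristic rests on $W=\varepsilon\,\F_{q^3}$ for a suitable $\varepsilon$ with $\varepsilon^{q^3-1}=-1$), and, above all, justifying that an abstract isomorphism of the two curves restricts to a projective equivalence of their $(q^2+1)$-point branch loci, i.e. that the degree-$(q^2-q+1)$ cyclic cover structure is canonical on both sides. The coprime-action step forcing $Q\leq Z(P)$ should also be stated carefully, since a priori $Q$ need not be elementary abelian.
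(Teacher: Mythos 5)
Your determination of $G$ and of the quotient model is correct, and in one place more self-contained than the paper: where the paper forces $Q$ to consist of elations by comparing the genus of $\cH_{q^3}/G$ (via \cite[Thm. 4.4]{GSX}) with that of $\cX_q$, you get $Q\leq Z(P)$ unconditionally from the action of $C=\la\alpha_b\ra$ on $P/Z(P)$: since $b^{q^3+1}=1$, $C$ centralizes $Z(P)$, and every nonzero $\la b\ra$-orbit on $P/Z(P)$ has length $q+1>q$, so the image of $Q$ there is trivial. The resulting model $w^{q^2-q+1}=\phi(u)$ is exactly the paper's $Y^{q^2-q+1}=F(X)$, and the auxiliary facts you use later are true: $V=L(W)\subseteq W$ (because $Q_0^{q^3}=Q_0$ forces $L\in\F_{q^3}[X]$), $W\cdot W\subseteq\F_{q^3}$, and a M\"obius map carrying $V\cup\{\infty\}$ to $\F_{q^2}\cup\{\infty\}$ does force $V=c\,\F_{q^2}$, which is incompatible with $V\subseteq W$.

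The genuine gap is the step you yourself flag as the main difficulty: nothing in your argument produces the map $\mu\in\PGL(2,q^6)$ from an abstract $\F_{q^6}$-isomorphism $\varphi\colon\cH_{q^3}/G\to\cX_q$. For that you need $\varphi D\varphi^{-1}$, where $D$ is the deck group of $\cH_{q^3}/G\to\PG(1,q^6)$, to be (conjugate in $\aut(\cX_q)$ to) the canonical Kummer group of order $q^2-q+1$ of $\cX_q$. The normality you invoke is an unproved and uncited structural statement about $\aut(\cX_q)$, and even granted it would not suffice: when $q\equiv2\pmod 3$, $3$ divides both $q^2-q+1$ and $q^2-1$, so no coprimality argument forces a subgroup of order $q^2-q+1$ into the normal one; what is really needed is that every element of $\aut(\cX_q)$ of order dividing $q^2-q+1$ lies in the Kummer group, i.e.\ essentially the full automorphism group of $\cX_q$, which you neither prove nor cite. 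The paper closes exactly this hole with much weaker input: by \cite[Thm. 12.11]{HKT} the ideal points are the unique fixed points of the full automorphism groups of both curves, so $\varphi(Q_\infty)=P_\infty$; then the Weierstrass semigroup $\la q^2-q+1,q^2\ra$ and Riemann--Roch force $\varphi$ to have the triangular form $(X,Y,T)\mapsto(aX+b,cX+dY+e,T)$, and comparing coefficients shows $F$ would be a scalar multiple of $a^{q^2}X^{q^2}-aX$ (zero set $a^{-1}\F_{q^2}$), which cannot divide $X^{q^3}+X$. If you replace your appeal to the structure of $\aut(\cX_q)$ by this fixed-point/semigroup rigidity, your concluding field-theoretic contradiction goes through; as written, the decisive reduction is missing.
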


\begin{proof}
Let $\cH_{q^3}$ be given in the norm-trace form $Y^{q^3+1}=X^{q^3}+X$. Since $Q$ is normal in $G$, then $G$ fixes the unique fixed point of $Q$ on $\cH_{q^3}$; up to conjugation, this is the ideal point $P_\infty$. By Hall's theorem, we can assume that $G=Q\rtimes\langle\alpha_\lambda\rangle$, where $$\alpha_\lambda:(X,Y,T)\mapsto(\lambda^{q^3+1}X,\lambda Y,T)=(X,\lambda Y,T),$$ with $\lambda$ primitive $(q+1)$-th root of unity. Suppose $\cX_q\cong\cH_{q^3}/G$, in particular the genus of $\cX_q$ equals the genus of $\cH_{q^3}/G$, which is given in \cite[Thm. 4.4]{GSX}. With the notations of \cite[Section 4]{GSX}, this implies $v=0$ and $q=p^w$, that is, the elements of $Q$ have the form
$$\beta_c:(X,Y,T)\mapsto(X+cT,Y,T),$$
with $c^{q^3}+c=0$. The set $\left\{c\in\mathbb{F}_{q^6}\mid\beta_c\in Q\right\}$ is an additive group, isomorphic to $Q$. Then $Q\cong\left\{c\in\mathbb{F}_{q^6}\mid L(c)=0\right\}$, where $L\in\mathbb{F}_{q^6}[X]$ is a linearized polynomial of degree $q$ dividing $X^{q^3}+X$, and there is a linearized polynomial $F\in\mathbb{F}_{q^6}[X]$ of degree $q^2$ such that $F(L(X))=X^{q^3}+X$ (see \cite[Theorems 3.62, 3.65]{LN}).
Therefore the quotient curve $\cH_{q^3}/G$ has equation $Y^{q^2-q+1}=F(X)$.

The thesis follows, if we show that there cannot exist an $\mathbb F_{q^6}$-isomorphism $\varphi:\cC\rightarrow\cX_{q}$, where $\cX_q: V^{q^2-q+1}=U^{q^2}-U$ and $\cC$ is a curve with equation $Y^{q^2-q+1}=F(X)$, with $F\in\mathbb{F}_{q^6}[X]$ a linearized divisor of $X^{q^3}+X$ of degree $q^2$.

Suppose such a $\varphi$ exists. By \cite[Thm. 12.11]{HKT}, the ideal points $P_\infty\in\cX_q$, $Q_\infty\in\cC$ are the unique fixed points of the respective automorphism groups $\aut(\cX_q)$, $\aut(\cC)$, hence $\varphi(Q_\infty)=P_\infty$. Moreover, the coordinate functions have pole divisors
$$ div(u)_\infty=(q^2-q+1)P_\infty,\; div(v)_\infty=q^2P_\infty,\quad div(x)_\infty=(q^2-q+1)Q_\infty,\; div(y)_\infty=q^2Q_\infty, $$
and the Weierstrass semigroups at the ideal points are $H(P_\infty)=H(Q_\infty)=\langle q^2-q+1,q^2\rangle$ (see \cite[Lemmata 12.1, 12.2]{HKT}). By Riemann-Roch theory (see \cite[Chapt. 1]{Sti}), it is easily seen that $\left\{1,x\right\}$ is a basis of the Riemann-Roch space $\mathcal{L}((q^2-q+1)P_\infty)$ associated to $(q^2-q+1)P_\infty$, and $\left\{1,x,y\right\}$ is a basis of $\mathcal{L}(q^2P_\infty)$. Then there exist $a,b,c,d,e\in\mathbb{F}_{q^6}$, $a,d\neq0$, such that $\varphi^*(u)=ax+b$ and $\varphi^*(v)=cx+dy+e$, where $\varphi^*:\mathbb{F}_{q^6}(\cX_q)\rightarrow\mathbb{F}_{q^6}(\cC)$ is the pull-back of $\varphi$; equivalently, $\varphi(X,Y,T)=(aX+b,cX+dY+e,T)$. Then the polynomial identity
$$ \left(aX+b\right)^{q^2}-\left(aX+b\right)-\left(cX+dY+e\right)^{q^2-q+1}=k\left(F(X)-Y^{q^2-q+1}\right) $$
holds, for some $k\in\bar{\mathbb{F}}_{q^6}$, $k\neq0$. By direct calculation and comparison of  the coefficients, we get the constraints $c=e=0$, $b\in\mathbb{F}_{q^2}$, $k=d^{q^2-q+1}$, which imply
$$ F(X)=k^{-1}a^{q^2}X^{q^2}-k^{-1}aX. $$
It is easily checked that the conventional $p$-associate of the linearized polynomial $F(X)$ is not a divisor of the conventional $p$-associate of $X^{q^3}+X$, hence $F(X)$ is not a divisor of $X^{q^3}+X$.
\qed\end{proof}

\begin{lemma}\label{tantisylow}
Let $G\leq\PGU(3,q^3)$ with $|G|=q(q+1)$. If $G$ has $q+1$ distinct Sylow $p$-subgroup $Q_1,\ldots Q_{q+1}$, then $G\cong(\mathbb{Z}_{p'})^s\rtimes Q_1$, where $p'$ is a prime and $(p')^s=q+1$.
\end{lemma}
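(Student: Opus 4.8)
The plan is to analyse the conjugation action of $G$ on the set $\Omega=\{Q_1,\dots,Q_{q+1}\}$ of its Sylow $p$-subgroups and to recognise $G$ as a sharply $2$-transitive group of degree $q+1$; the asserted structure then follows from the classical theorem of Zassenhaus on sharply $2$-transitive groups.

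First I record that distinct $Q_i$ intersect trivially; this is the same fact already used in the proof of Lemma~\ref{quantisylow}, and it comes from the circumstance that a non-trivial $p$-element of $\PGU(3,q^3)$ fixes a unique point of $\cH_{q^3}$. Consequently there are exactly $1+(q+1)(q-1)=q^2$ elements of $p$-power order in $G$, leaving $q(q+1)-q^2=q$ further elements. Next, $G$ is transitive on $\Omega$ by Sylow's theorem, and since the number of Sylow $p$-subgroups equals $q+1$ we get $|N_G(Q_i)|=|G|/(q+1)=q$, so $N_G(Q_i)=Q_i$; thus the stabiliser of the point $Q_i$ is $Q_i$ itself, and the kernel of the action, $\bigcap_i Q_i$, is trivial, so $G$ acts faithfully on $\Omega$. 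For $i\neq j$ the stabiliser in $Q_i$ of the point $Q_j$ is $Q_i\cap Q_j=\{1\}$, so $Q_i$ acts freely, hence (by an order count) regularly, on $\Omega\setminus\{Q_i\}$; therefore $G$ is $2$-transitive on $\Omega$. Finally the stabiliser of two points is again $Q_i\cap Q_j=\{1\}$ and $|G|=(q+1)q=|\Omega|(|\Omega|-1)$, so $G$ is \emph{sharply} $2$-transitive of degree $q+1$.

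By Zassenhaus' theorem, a finite sharply $2$-transitive group has prime-power degree and contains a regular normal subgroup which is elementary abelian of that order; hence $q+1=(p')^{s}$ for a prime $p'$, and $G$ has a normal subgroup $N\cong(\mathbb{Z}_{p'})^{s}$ acting regularly on $\Omega$. Since $Q_1$ is a point stabiliser and $N$ is regular, $N\cap Q_1=\{1\}$ and $|N|\,|Q_1|=|G|$, so $G=N\rtimes Q_1\cong(\mathbb{Z}_{p'})^{s}\rtimes Q_1$, as claimed. (If one prefers to avoid quoting Zassenhaus: $G$ is a Frobenius group with complement $Q_1$ and Frobenius kernel $N$ of order $q+1$, on which $Q_1$ acts by conjugation transitively on $N\setminus\{1\}$; thus every non-trivial element of $N$ lies in a single $\aut(N)$-orbit, so $N$ is characteristically simple with all non-trivial elements of one prime order $p'$, whence $N$ is elementary abelian of order $(p')^{s}$.)

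I expect the only genuinely non-elementary step to be this last one — deriving that the sharply $2$-transitive degree $q+1$ must be a prime power and that the regular normal subgroup is elementary abelian; everything preceding it is routine bookkeeping with Sylow counts and the orbit--stabiliser theorem. Care is needed only to justify the trivial-intersection property of the $Q_i$ and to keep track of which point stabiliser is $Q_1$.
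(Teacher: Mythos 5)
Your proof is correct, and its skeleton is the same as the paper's: recognize $G$ as a sharply $2$-transitive group of degree $q+1$, then invoke the classical structure theory of such groups. The differences are in the packaging, and they are worth noting. You obtain sharp $2$-transitivity from the conjugation action on the Sylow $p$-subgroups ($N_G(Q_i)=Q_i$ from the Sylow count, two-point stabilizers $Q_i\cap Q_j=\{1\}$), whereas the paper acts on the fixed points $P_1,\ldots,P_{q+1}$ and quotes Lemma~\ref{quantisylow} for the single orbit; the two actions are equivalent, and your version is slightly more self-contained, though your one-line justification of $Q_i\cap Q_j=\{1\}$ should be completed as in Lemma~\ref{quantisylow}: a nontrivial common element forces $P_i=P_j$, and then $Q_i$ and $Q_j$ both lie in $G$ intersected with the unique (normal) Sylow $p$-subgroup of the stabilizer of that point in $\PGU(3,q^3)$, whence $Q_i=Q_j$ by maximality. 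The more substantive difference is the last step: the paper goes through Zassenhaus' near-field classification (Hall, Thm.\ 20.7.1), computes the near-field order $f=q+1$, excludes the seven exceptional near-fields and concludes via Dickson near-fields that the regular normal subgroup is elementary abelian; you quote Zassenhaus' theorem directly in the form ``a finite sharply $2$-transitive group has an elementary abelian regular normal subgroup whose order is the (prime-power) degree,'' which shortcuts the near-field detour and is cleaner while resting on the same classical result. Your parenthetical alternative is also sound, but it is not really more elementary: producing the Frobenius kernel $N$ as a subgroup requires Frobenius' theorem (or the classical special argument for sharply $2$-transitive groups), after which the transitivity of $Q_1$ on $N\setminus\{1\}$ and the characteristic-simplicity/nilpotency argument do yield that $N$ is elementary abelian. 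None of these points is a gap; both routes prove the lemma.
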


\begin{proof}
By Lemma \ref{quantisylow}, the points $P_1,\ldots,P_{q+1}$ fixed respectively by $Q_1,\ldots,Q_{q+1}$ constitute a single orbit $\cO$ under the action of $G$. By Burnside's Lemma, $G$ is sharply $2$-transitive on $\cO$. Then, by \cite[Thm. 20.7.1]{Hall}, $G$ is isomorphic to the group of affine trasformations of a near-field $F$; moreover, $G$ has a regular normal subgroup $N$, hence $G=N\rtimes Q_1$. The order $f$ of $F$ satisfies $q(q+1)=(f-1)f$, which implies $f=q+1$. By this condition, $F$ cannot be one of the seven exceptional near-fields listed in \cite{Z}, hence $F$ is a Dickson near-field, see \cite[Thm. 20.7.2]{Hall} for a description. In particular, $N$ is isomorphic to the additive group $(\mathbb{Z}_{p'})^s$ of a finite field $\mathbb{F}_{(p')^s}$.
\qed\end{proof}

\begin{proposition}\label{q+1sylow}
Let $G\leq\PGU(3,q^3)$ with $|G|=q(q+1)$. If $G$ has $q+1$ distinct Sylow $p$-subgroup $Q_1,\ldots Q_{q+1}$, then $\cX_q\not\cong\cH_{q^3}/G$.
\end{proposition}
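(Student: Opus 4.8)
The plan is to mirror the structure of Proposition \ref{1sylow}: assume $\cX_q\cong\cH_{q^3}/G$, pin down the normal structure of $G$, and derive a contradiction by comparing the quotient $\cH_{q^3}/N$ with what it must be if $N$ is a normal subgroup sitting between the quotient level of $G$ and the top curve. First I would use Lemma \ref{tantisylow} to write $G\cong(\mathbb{Z}_{p'})^s\rtimes Q_1$, where $(p')^s=q+1$ and $Q_1$ is one of the Sylow $p$-subgroups. The key observation is that the regular normal subgroup $N\cong(\mathbb{Z}_{p'})^s$ of $G$ acts on $\cH_{q^3}$ with a quotient $\cH_{q^3}/N$ that is again a maximal curve, and $G/N\cong Q_1$ acts on it as a $p$-group fixing a point. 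I expect that a genus computation via Riemann--Hurwitz applied to $\cH_{q^3}\to\cH_{q^3}/N$ — using the fact that $N$, being semiregular on $\cH_{q^3}$ by the semiregularity of $G$ (inherited because $G$ has no fixed point and $N\triangleleft G$, combined with Lemma \ref{gradofisso}-style arguments are not available here, so instead one uses that the stabilizer in $G$ of any point is a $p$-group, hence meets $N$ trivially) — will show $N$ is fixed-point-free on $\cH_{q^3}$.

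Next I would count fixed points. Since $G$ is sharply $2$-transitive on the orbit $\cO=\{P_1,\dots,P_{q+1}\}$ of the points fixed by the $Q_i$, every nontrivial element of $N$ is fixed-point-free on $\cO$, and I claim it is fixed-point-free on all of $\cH_{q^3}$: a nontrivial $\nu\in N$ has order a power of $p'\ne p$, so its fixed points on $\cH_{q^3}$ are either a single rational point off the curve's... more precisely, a cyclic subgroup of order $p'$ in $\PSU(3,q^3)$ of order prime to $p$ either fixes $3$ non-collinear points (self-polar triangle type) or fixes a point and a $(q^3+1)$-secant line; in the first case it cannot be semiregular, in the second the same analysis as Case \textit{ii)} applies. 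The upshot should be that $\cH_{q^3}/N$ is $\F_{q^6}$-maximal of genus $g'$ with $2g'+?$ determined purely by $|N|=q+1$, and then $G/N\cong Q_1$ of order $q$ is a $p$-subgroup of its automorphism group fixing exactly one point (the image of $P_1$). At this point I would invoke the structure of $\cH_{q^3}/N$ together with \cite[Thm. 12.11]{HKT}-type rigidity — or, alternatively and more cleanly, produce a group $\bar G\supsetneq G$ in $\PGU(3,q^3)$ with $G\triangleleft\bar G$ acting so that $\bar G/G$ does not fix a point of $\cH_{q^3}/G$, contradicting Theorem \ref{GGSpuntofisso}; but since that theorem is about $\cC_{2^n}$ and we are now in Section 4, the correct tool is instead the rigidity of $\cX_q$ at its ideal point, exactly as in the last two paragraphs of the proof of Proposition \ref{1sylow}.

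Concretely, the cleanest route: show $G$ must fix the ideal point $P_\infty$ of $\cX_q\cong\cH_{q^3}/G$ composed with the covering, which forces a point of $\cH_{q^3}$ with stabilizer of order divisible by $q$ — but no point of $\cH_{q^3}$ has a $p$-subgroup of order $q=p^w<q^9$ in its stabilizer that is normalized by an element of order $q+1$ unless that element centralizes a Borel-type configuration, which lands us back in Case \textit{i)} of Theorem \ref{MH}; then one runs the argument of Proposition \ref{1sylow} Case \textit{i)} verbatim, noting $G$ is now \emph{not} cyclic so the conjugacy normal form and the linearized-polynomial identity computation change, but the final parity/divisibility obstruction $F(X)\nmid X^{q^3}+X$ via conventional $p$-associates persists. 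I would therefore organize the proof as: (1) reduce to $G$ fixing a point $P\in\cH_{q^3}$ using Theorem \ref{MH} applied to the maximal subgroup containing $G$ (cases \textit{ii)}--\textit{iv)} and the sporadic ones are killed by order divisibility, case \textit{i)} survives); (2) conjugate so $P=P_\infty$ and $G\le S=P\rtimes H$ with $H$ cyclic of order $q^6-1$; (3) since $|Q_1|=q$ divides $|P|$ and $|N|=q+1$ divides $|H|$, realize $G$ inside $S$ explicitly; (4) match genera via \cite[Thm. 4.4]{GSX} to force the shape of the quotient equation; (5) finish with the isomorphism-rigidity / linearized-polynomial contradiction as in Proposition \ref{1sylow}.

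The main obstacle I anticipate is step (1): ruling out that the maximal subgroup of $\PSU(3,q^3)$ (or $\PGU$) containing $G$ is of type \textit{iv)} — the normalizer of a Singer cycle, of order $3(q^6-q^3+1)$ — since $q(q+1)$ could conceivably divide $3(q^6-q^3+1)$ for small $q$; a short congruence argument ($q^6-q^3+1\equiv 3\pmod{q+1}$, so $q+1\mid 9$, and separately $q\nmid 3(q^6-q^3+1)$ since $\gcd(q,q^6-q^3+1)=1$) should dispatch it, leaving only case \textit{i)}. The secondary obstacle is that, unlike Proposition \ref{1sylow}, here $Q_1$ need not be elementary abelian nor even abelian, so identifying the additive-group-of-$\F_{q^6}$ structure needed to write the quotient as $Y^{q^2-q+1}=F(X)$ with $F$ linearized requires the extra input that, inside a point-stabilizer of $\cH_{q^3}$, any $p$-subgroup of order $q$ that is a Hall $p'$-complement-normalized has the prescribed linearized form — which again follows from \cite[Section 4]{GSX} and \cite[Thm. 4.4]{GSX}.
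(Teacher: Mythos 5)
Your central strategy fails at its first step. In the situation of this proposition, $G$ has \emph{no} fixed point on $\cH_{q^3}$: by Lemma \ref{quantisylow} the points $P_1,\ldots,P_{q+1}$ fixed by the $q+1$ Sylow $p$-subgroups are pairwise distinct and form a single $G$-orbit, and since a nontrivial $p$-element of $\PGU(3,q^3)$ fixes exactly one point of $\cH_{q^3}$, a common fixed point of $G$ would force all the $P_i$ to coincide. Hence $G$ cannot lie in a maximal subgroup of type \textit{i)} of Theorem \ref{MH}, and your plan to ``reduce to $G$ fixing a point $P\in\cH_{q^3}$'' and then rerun the argument of Proposition \ref{1sylow} (conjugating $G$ into the point stabilizer $P\rtimes H$, invoking \cite[Thm. 4.4]{GSX}, and deriving the linearized-polynomial contradiction) is not available: the covering $\cH_{q^3}\to\cX_q$ only forces a short orbit of length $q+1$ above the ideal point of $\cX_q$, with point stabilizers $Q_i$ of order $q$, not a fixed point. (Incidentally, your claim that cases \textit{ii)}--\textit{iv)} are killed by order divisibility is also false for case \textit{ii)}: $q(q+1)$ divides $q^9(q^3-1)(q^3+1)^2/d$ since $q+1\mid q^3+1$.) Your auxiliary claims are likewise unjustified: a nontrivial element of $N\cong(\mathbb{Z}_{p'})^s$ has order dividing $q+1\mid q^3+1$, so it may well be a homology with $q^3+1$ fixed points on $\cH_{q^3}$ rather than fixed-point-free, and stabilizers in $G$ of points outside the orbit $\cO$ need not be $p$-groups; so neither the semiregularity of $N$ nor the asserted structure of $\cH_{q^3}/N$ can be taken for granted.

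The paper's proof goes a different way, and precisely has to handle the homology alternative you dismissed. From Lemma \ref{tantisylow}, $G\cong(\mathbb{Z}_{p'})^s\rtimes Q_1$ comes from a Dickson near-field $F$ of order $q+1$. For $q$ odd, all involutions of $\PGU(3,q^3)$ are homologies and at most three of them commute pairwise, forcing $(p')^s=4$, i.e. $q=3$, excluded. For $q$ even, $Q_1\cong F^*$ is metacyclic of exponent $2$ or $4$ by \cite[Lemma 2.1]{KOS}, which restricts to $q\in\{4,8\}$ after discarding $q=2$ and $q=16$. These two residual cases are then eliminated by computing the degree $\Delta$ of the Different of $\cH_{q^3}\to\cH_{q^3}/G$ via Riemann--Hurwitz and summing the contributions $i(\sigma)$: $q^3+2$ for involutions, $2$ for elements of order $4$, and, for elements of odd order, either $q^3+1$ (homologies) or at most $3$ fixed points --- exactly the dichotomy for the elements of $N$ --- showing in each case that no admissible combination attains $\Delta$. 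If you want to salvage your outline, you would need to replace steps (1)--(5) entirely by an argument of this fixed-point-counting type (or some other obstruction that does not presuppose a fixed point of $G$ on $\cH_{q^3}$).
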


\begin{proof}
We use the notations of Lemma \ref{tantisylow} and assume $\cX_q\cong\cH_{q^3}/G$.

Suppose $q$ is odd. Then all involutions of $\PGU(3,q^3)$ are conjugate, and they are homologies of $\PG(2,q^6)$, see \cite[Lemma 2.2]{KOS}. Two homologies commute if and only if the center of each lies on the axis of the other (see for example \cite[Thm. 5.32]{Cox}), hence the maximum number of involutions commuting pairwise is $3$, since their centers are three non-collinear points. Then $(p')^s=4$ and $q=3$, against the assumptions of this section.

Suppose $q$ is even. $Q_1$ is isomorphic to the multiplicative group of $F$, hence it is a metacyclic group, see for example \cite[Ex. 1.19]{Cam}; moreover, $Q_1$ has exponent $2$ or $4$ by \cite[Lemma 2.1]{KOS}. Therefore $q\in\left\{2,4,8,16\right\}$.
The case $q=2$ is excluded. If $q=16$, then $F$ is a Dickson near-field of prime order $17$, hence $F$ is a field, against the exponent $2$ or $4$ of $Q_1$. Then $q=4$ or $q=8$.

We use the Riemann-Hurwitz formula \cite[Thm. 3.4.13]{Sti} on the covering $\cH_{q^3}\rightarrow\cX_q\cong\cH_{q^3}/G$, in order to get a contradiction on the degree $\Delta=\left(2g(\cH_{q^3})-2\right)-|G|\left(2g(\cX_q)-2\right)$ of the Different. By \cite[Thm. 3.8.7]{Sti} 
$$ \Delta=\sum_{\sigma\in G\setminus\left\{id\right\}}i(\sigma), $$
where the contributions $i(\sigma)\geq0$ to $\Delta$ satisfy the following:
\begin{itemize}
\item If $\sigma$ has order $2$, then $i(\sigma)=q^3+2$; if $\sigma$ has order $4$, then $i(\sigma)=2$ (see \cite[Eq. (2.12)]{Sti}).
\item If $\sigma$ is odd, then $i(\sigma)$ equals the number of fixed points of $\sigma$ on $\cH_{q^3}$, see \cite[Cor. 3.5.5]{Sti}; moreover, by \cite[pp. 141-142]{H}, either $\sigma$ has exactly $3$ fixed points or $\sigma$ is a homology. In the former case $i(\sigma)\leq3$, in the latter $i(\sigma)=q^3+1$.
\end{itemize}

Let $q=4$, hence $\Delta=470$ and $G=\mathbb{Z}_5\rtimes Q_1$. If $Q_1\cong\mathbb{Z}_2\times\mathbb{Z}_2$, then $G$ has $15$ involutions, whose contributions to $\Delta$ sum up to $990>\Delta$. Then $Q_1\cong\mathbb{Z}_4$, and the contributions to $\Delta$ of the $Q_i$'s sum up to $5\cdot66+10\cdot2=350$. The non-trivial elements of $\mathbb{Z}_5$ are generators of $\mathbb{Z}_5$, then either all of them are homologies or all of them fix $3$ points. In the former case their contribution to $\Delta$ exceeds $120$, in the latter their contribution is smaller than $120$.

Let $q=8$, hence $\Delta=7758$ and $G=(\mathbb{Z}_3\times\mathbb{Z}_3)\rtimes Q_1$. If $Q_1$ has more than one involution, then the involutions of $G$ contribute to $\Delta$ of at least $18\cdot514>\Delta$. Then $Q_1$ is the quaternion group, and the $Q_i$'s contribute to $\Delta$ of $9\cdot514+54\cdot2=4734$. The contribution to $\Delta$ of any non-trivial element of $\mathbb{Z}_3\times\mathbb{Z}_3$ is either $513$ or less than $4$, hence they cannot sum up to $\Delta-4734$.
\qed\end{proof}
\vspace*{.2cm}
By Lemma \ref{quantisylow} and Propositions \ref{1sylow}, \ref{q+1sylow}, we have shown the following result.

\begin{proposition}\label{terzovalore}
There is no Galois-covering $\cH_{q^3}\rightarrow\cX_q$ of degree $q^2+q$.
\end{proposition}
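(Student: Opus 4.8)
The plan is to derive the statement directly from the three results just proved, so the argument is short. Suppose, for a contradiction, that a Galois covering $\cH_{q^3}\rightarrow\cX_q$ of degree $q^2+q$ exists. Then $\cX_q\cong\cH_{q^3}/G$ for some $G\leq\aut(\cH_{q^3})=\PGU(3,q^3)$ with $|G|=q^2+q=q(q+1)$. Since $p\mid q$ while $p\nmid q+1$, a Sylow $p$-subgroup of $G$ has order exactly $q$, so the only structural parameter left to control is how many such subgroups $G$ has.

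First I would invoke Lemma \ref{quantisylow}, which forces the number of Sylow $p$-subgroups of $G$ to be either $1$ or $q+1$ — no intermediate value occurs. This is precisely the dichotomy under which Propositions \ref{1sylow} and \ref{q+1sylow} were established. In the first case $G$ has a unique (hence normal) Sylow $p$-subgroup, and Proposition \ref{1sylow} gives $\cX_q\not\cong\cH_{q^3}/G$; in the second case $G$ has $q+1$ Sylow $p$-subgroups, and Proposition \ref{q+1sylow} gives $\cX_q\not\cong\cH_{q^3}/G$. Either way we obtain a contradiction, so no Galois covering of degree $q^2+q$ can exist.

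I do not expect a genuine obstacle in this final step, since all the substance has already been absorbed into the preceding statements: the counting via the orbit–stabiliser theorem in Lemma \ref{quantisylow}, the Riemann–Roch and linearized-polynomial analysis of the quotient curve $\cH_{q^3}/G$ in Proposition \ref{1sylow}, and — the hardest input — the near-field classification together with the Riemann–Hurwitz computation on $\cH_{q^3}\rightarrow\cH_{q^3}/G$ in Proposition \ref{q+1sylow}, which cuts the possibilities down to $q\in\{4,8\}$ and then excludes them by comparing the degree of the different. Once Proposition \ref{terzovalore} is in hand, combining it with Propositions \ref{primovalore}, \ref{secondovalore} and the bound $q^2+q\leq d\leq q^2+q+2$ of Proposition \ref{possibilivalori} completes the proof of Theorem \ref{result2}.
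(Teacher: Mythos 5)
Your proposal is correct and matches the paper exactly: the authors also obtain Proposition \ref{terzovalore} immediately from the dichotomy of Lemma \ref{quantisylow} combined with Propositions \ref{1sylow} and \ref{q+1sylow}, with all the substantive work residing in those earlier statements.
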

\vspace*{.2cm}
Finally, Theorem \ref{result2} follows from Propositions \ref{possibilivalori}, \ref{primovalore}, \ref{secondovalore}, and \ref{terzovalore}.


\begin{thebibliography}{99}

\bibitem{ABQ} Abd\'on, M., Bezerra, J., Quoos, L.: Further examples of maximal curves. J. Pure Appl. Algebra {\bf 213} (6), 1192--1196 (2009)

\bibitem{Cam} Cameron, P.J.: Permutation Groups. Cambridge University Press (1999)

\bibitem{CKT} Cossidente, A., Korchm\'aros, G., Torres, F.: On curves covered by the Hermitian curve. J. Algebra {\bf 216} (1), 56--76 (1999)

\bibitem{CKT2} Cossidente, A., Korchm\'aros, G., Torres, F.: Curves of large genus covered by the Hermitian curve. Comm. Algebra {\bf 28} (10), 4707--4728 (2000)

\bibitem{Cox} Coxeter, H.S.M.: The Real Projective Plane, 3rd edn. Springer, New York (1993)

\bibitem{D} Dickson, L. E.: Linear groups with an exposition of the Galois field theory. Teubner, Leipzig (1902)

\bibitem{DM} Duursma, I., Mak, K.-H.: On maximal curves which are not Galois subcovers of the Hermitian curve. Bull. Braz. Math. Soc. (N.S.) {\bf 43} (3), 453--465 (2012)

\bibitem{FT} Fuhrmann, R., Torres, F.: On Weierstrass points and optimal curves. Rend. Circ. Mat. Palermo Suppl. 51 (Recent Progress in Geometry, Ballico E, Korchm\'aros G, (Eds.)), 25--46 (1998)

\bibitem{G} Garcia, A.: Curves over finite fields attaining the Hasse-Weil upper bound. In: European Congress of Mathematics, vol. II (Barcellona 2000), Progr. Math. 202, pp. 199--205. Birkh\"auser, Basel (2001)

\bibitem{G2} Garcia, A.: On curves with many rational points over finite fields. In: Finite Fields with Applications to Coding Theory, Cryptography and Related Areas, pp. 152--163. Springer, Berlin (2002)

\bibitem{GGS} Garcia, A., G\"uneri, C., Stichtenoth, H.: A generalization of the Giulietti-Korchm\'aros maximal curve. Advances in Geometry {\bf 10} (3), 427--434 (2010)

\bibitem{GS} Garcia, A., Stichtenoth, H.: Algebraic function fields over finite fields with many rational places. IEEE Trans. Inform. Theory {\bf 41}, 1548--1563 (1995)

\bibitem{GS2} Garcia, A., Stichtenoth, H.: A maximal curve which is not a Galois subcover of the Hermitian curve. Bull. Braz. Math. Soc. (N.S.) {\bf 37} (1), 139--152 (2006)

\bibitem{GSX} Garcia, A., Stichtenoth, H., Xing, C.P.: On Subfields of the Hermitian Function Field. Compositio Math. {\bf 120}, 137--170 (2000)

\bibitem{GT} Garcia, A., Torres, F.: On unramified coverings of maximal curves. In: Proceedings of AGCT-10 2005, S\'emin. Congr. 21, 35--42 (2010)

\bibitem{GK} Giulietti, M., Korchm\'aros, G.: A new family of maximal curves over a finite field. Math. Ann. {\bf 343} (1), 229--245 (2009)

\bibitem{GOS} G\"uneri, C., \"Ozdemir, M., Stichtenoth, H.: The automorphism group of the generalized Giulietti-Korchm\'aros function field. Advances in Geometry {\bf 13} (2), 369--380 (2013)

\bibitem{GMP} Guralnick, R., Malmskog, B., Pries, R.: The automorphism of a family of maximal curves. J. Algebra {\bf 361}, 92--106 (2012)

\bibitem{Hall} Hall, M.: The Theory of Groups. Macmillan, New York (1959)

\bibitem{H} Hartley, R. W.: Determination of the ternary collineation group whose coefficients lie in the $GF(2^n)$. Ann. of Math. Second Series {\bf 27} (2), 140--158 (1925)

\bibitem{HKT} Hirschfeld, J.W.P., Korchm\'aros, G., Torres, F.: Algebraic Curves over a Finite Field. Princeton Series in Applied Mathematics, Princeton (2008)

\bibitem{HP} Hughes, D.R., Piper, F.C.: Projective planes. Graduate Text in Mathematics {\bf 6}. Springer, New York (1973) 

\bibitem{Hu} Huppert, B.: Endliche Gruppen I. Die Grundlehren der Mathematischen Wissenschaften {\bf 134}. Springer, Berlin (1967)

\bibitem{KOS} Kantor, W.N., O'Nan M.E., Seitz, G.M.: $2$-Transitive Groups in Which the Stabilizer of Two Points is Cyclic. J. Algebra {\bf 21}, 17--50 (1972)

\bibitem{L} Lachaud, G.: Sommes d'Eisenstein et nombre de points de certaines courbes alg\'ebriques sur les corps finis. C.R. Acad. Sci. Paris 305, S\'erie I, 729--732 (1987)

\bibitem{LN} Lidl, R., Niederreiter, H.: Introduction to finite fields and their applications. Cambridge University Press, Cambridge (1986)

\bibitem{Mac} Macdonald, I.D.: The theory of groups. Oxford University Press, Oxford (1968)

\bibitem{Mak} Mak, K.-H.: On congruence function fields with many rational points. PhD Thesis. Available at www.ideals.illinois.edu

\bibitem{M} Mitchell, H.H.: Determination of the ordinary and modular ternary linear groups. Trans. Amer. Math. Soc. {\bf 12} (2), 207--242 (1911)

\bibitem{Sti} Stichtenoth, H.: Algebraic function fields and codes, 2nd edn. Graduate Texts in Mathematics {\bf 254}. Springer, Berlin (2009)

\bibitem{V} van der Geer, G.: Curves over finite fields and codes. In: European Congress of Mathematics, vol. II (Barcellona 2000), Progr. Math. 202, pp. 225--238. Birkh\"auser, Basel (2001)

\bibitem{V2} van der Geer, G.: Coding theory and algebraic curves over finite fields: a survey and questions. In: Applications of Algebraic Geometry to Coding Theory, Physics and Computation, NATO Sci. Ser. II Math. Phys. Chem. 36, pp. 139--159. Kluwer, Dordrecht (2001)

\bibitem{VY} Veblen, O., Young, J.W.: Projective Geometry. The Atheneum Press, Boston (1910)

\bibitem{Z} Zassenhaus, H.: \"Uber endliche Fastk\"orper. Abh. Math. Sem. Univ. Hamburg {\bf 11}, 132--145 (1936)

\end{thebibliography}
\end{document}